\pgfplotsset{compat=1.18}
\newcommand*\circled[1]{\tikz[baseline=(char.base)]{
            \node[shape=circle,draw,inner sep=2pt] (char) {#1};}}
\numberwithin{equation}{section}
\newtheorem{theorem}{Theorem}[section]
\newtheorem{proposition}[theorem]{Proposition}
\newtheorem{corollary}[theorem]{Corollary}
\newtheorem{lemma}[theorem]{Lemma}
\newtheorem{conjecture}[theorem]{Conjecture}
\newtheorem{observation}[theorem]{Observation}
\newtheorem{problem}[theorem]{Problem}
\theoremstyle{definition}
\newtheorem{defn}[theorem]{Definition}
\newcommand{\Hilb}{{\mathrm{Hilb}}}
\newcommand{\type}{{\mathrm{type}}}
\newcommand{\one}{{\mathbf{1}}}
\newcommand{\rdeg}{{\mathrm{rdeg}}}
\newcommand{\cdeg}{{\mathrm{cdeg}}}
\newcommand{\lft}{{\mathrm{left}}}
\newcommand{\mult}{{\mathrm{mult}}}
\newcommand{\row}{{\mathrm{row}}}
\newcommand{\col}{{\mathrm{col}}}
\newcommand{\ttheta}{{\bm \theta}}
\newcommand{\littlesl}{{\mathfrak{sl}}}
\newcommand{\rowsum}{{\mathrm{rowsum}}}
\newcommand{\colsum}{{\mathrm{colsum}}}
\newcommand{\ddeg}{{\mathrm{ddeg}}}
\newcommand{\Eseries}{{\mathrm{E}}}
\newcommand{\spn}{{\mathrm{span}}}
\newcommand{\Frob}{{\mathrm{Frob}}}
\newcommand{\symm}{{\mathfrak{S}}}
\newcommand{\II}{{\mathbf{I}}}
\newcommand{\RRR}{{\mathbf{R}}}
\newcommand{\gr}{{\mathrm {gr}}}
\newcommand{\BBB}{{\mathcal{B}}}
\newcommand{\SSS}{{\mathcal{S}}}
\newcommand{\PPP}{{\mathcal{P}}}
\newcommand{\spl}{{\mathsf {split}}}
\newcommand{\merge}{{\mathsf {merge}}}
\newcommand{\shift}{{\mathsf {shift}}}
\newcommand{\Zpoints}{{\mathcal{Z}}}
\newcommand{\CCC}{{\mathcal{C}}}
\newcommand{\fin}{{\mathrm{fin}}}
\newcommand{\initial}{{\mathrm{in}}}
\newcommand{\Stab}{{\mathrm{Stab}}}
\newcommand{\DDD}{{\mathcal{D}}}
\newcommand{\TTT}{{\mathcal{T}}}
\newcommand{\CC}{{\mathbb{C}}}
\newcommand{\QQ}{{\mathbb{Q}}}
\newcommand{\ZZ}{{\mathbb{Z}}}
\newcommand{\PP}{{\mathbb{P}}}
\newcommand{\RR}{{\mathbb{R}}}
\newcommand{\WWW}{{\mathcal{W}}}
\newcommand{\LLL}{\mathcal{L}}
\newcommand{\FF}{\mathbb{F}}
\newcommand{\Mat}{\mathrm{Mat}}
\newcommand{\Ind}{\mathrm{Ind}}
\newcommand{\shape}{\mathrm{shape}}
\newcommand{\SSYT}{\mathrm{SSYT}}
\newcommand{\xxx}{{\mathbf{x}}}
\newcommand{\yyy}{{\mathbf{y}}}
\newcommand{\zzz}{{\mathbf{z}}}
\newcommand{\eee}{{\mathbf{e}}}
\newcommand{\ddd}{{\mathbf{d}}}
\newcommand{\mb}{{\mathfrak{mb}}}
\newcommand{\conv}{\text{conv}}
\newcommand{\image}{\text{image}}
\newcommand{\trunc}{\text{trunc}}
\newcommand{\zigzag}{\text{zigzag}}
\begin{document}

\title[Zigzags, contingency tables, and quotient rings]
{Zigzags, contingency tables, and quotient rings}

\author{Jaeseong Oh}
\address{Korea Institute for Advanced Study}
\email{jsoh@kias.re.kr}
\author{Brendon Rhoades}
\address{University of California, San Diego}
\email{bprhoades@ucsd.edu}

\begin{abstract}
    Let $\xxx_{k \times p}$ be a $k \times p$ matrix of variables and let $\FF[\xxx_{k \times p}]$ be the polynomial ring in these variables. Given two weak compositions $\alpha,\beta \models_0 n$ of lengths $\ell(\alpha) = k$ and $\ell(\beta) = p$, we study the ideal $I_{\alpha,\beta} \subseteq \FF[\xxx_{k \times \ell}]$ generated by row sums, column sums, monomials in row $i$ of degree $> \alpha_i$, and monomials in column $j$ of degree $> \beta_j$. We prove results connecting algebraic properties of the quotient ring $R_{\alpha,\beta} := \FF[\xxx_{k \times \ell}]/I_{\alpha,\beta}$ with the set $\CCC_{\alpha,\beta}$ of $\alpha,\beta$-contingency tables. The standard monomial basis of $R_{\alpha,\beta}$ with respect to a diagonal term order is encoded by the matrix-ball avatar of the RSK correspondence. We describe the Hilbert series of $R_{\alpha,\beta}$ in terms of a zigzag statistic on contingency tables. The ring $R_{\alpha,\beta}$ carries a graded action of the product $\Stab(\alpha) \times \Stab(\beta)$ of symmetry groups of the sequences $\alpha = (\alpha_1,\dots,\alpha_k)$ and $\beta = (\beta_1,\dots,\beta_p)$; we describe how to calculate the isomorphism type of this graded action. Our analysis regards the set $\CCC_{\alpha,\beta}$ as a locus in the affine space $\Mat_{k \times p}(\FF)$ and applies  orbit harmonics  to this locus.
\end{abstract}

\maketitle

\section{Introduction}
\label{sec:Introduction}

Write $[n] := \{1,\dots, n\}$ and let $A = (a_{i,j})$ be a $k \times p$ matrix over the nonnegative integers. A set $Z \subseteq [k] \times [p]$ of matrix positions is a {\em zigzag} if it has the form 
$Z = \{ (i_1,j_1), \dots, (i_q,j_q) \}$ where $1 \leq i_1 \leq \cdots \leq i_q \leq k$ and $1 \leq j_1 \leq \cdots \leq j_q \leq p$.  Three zigzags are shown in stars as follows:
\begin{footnotesize}
\[
\begin{pmatrix}
    \cdot & \star & \star &  \star & \cdot & \cdot  & \cdot  \\
    \cdot & \cdot & \cdot & \cdot & \cdot & \star & \cdot  \\
    \cdot & \cdot & \cdot & \cdot & \cdot & \cdot & \cdot  \\
    \cdot & \cdot & \cdot &  \cdot & \cdot & \star & \star \\
    \cdot & \cdot & \cdot &  \cdot & \cdot & \cdot & \star \\
\end{pmatrix}, \quad 
\begin{pmatrix}
    \cdot & \star & \cdot &  \cdot & \cdot & \cdot  & \cdot  \\
    \cdot & \cdot & \cdot & \cdot & \cdot & \cdot & \cdot  \\
    \cdot & \star & \cdot & \cdot & \cdot & \cdot & \cdot  \\
    \cdot & \cdot & \star &  \star & \cdot & \star & \cdot \\
    \cdot & \cdot & \cdot &  \cdot & \cdot & \star & \star \\
\end{pmatrix}, \quad 
\begin{pmatrix}
    \cdot & \cdot & \cdot &  \cdot & \cdot & \cdot  & \cdot  \\
    \cdot & \cdot & \cdot & \cdot & \cdot & \cdot & \cdot  \\
    \cdot & \star & \star & \cdot & \star & \star & \star  \\
    \cdot & \cdot & \cdot &  \cdot & \cdot & \cdot & \cdot \\
    \cdot & \cdot & \cdot &  \cdot & \cdot & \cdot & \cdot \\
\end{pmatrix}.
\]\end{footnotesize}
Define the {\em zigzag number} of $A$ by
\begin{equation}
    \label{eq:zigzag-number-defn}
    \zigzag(A) := \max \left\{ \sum_{(i,j) \in Z} a_{i,j} \,:\, Z \subseteq [k] \times [\ell]  \text{ is a zigzag}\right\}.
\end{equation}
For example, we have
\[
\zigzag
\begin{footnotesize}
\begin{pmatrix}
    1 & 2 & 0 & 1  \\
    0 & 0 & 2  & 1 \\
    3 & 0 & 1 & 1 
\end{pmatrix}
\end{footnotesize}= 7
\]
as witnessed by either underlined zigzag of matrix positions 
\[ 
\begin{footnotesize}
\begin{pmatrix}
    \underline{\bf 1} & \underline{\bf 2} & 0 & 1  \\
    0 & 0 & \underline{\bf 2}  & 1 \\
    3 & 0 & \underline{\bf 1} & \underline{\bf 1} 
\end{pmatrix} 
\end{footnotesize} \quad \text{or} \quad 
\begin{footnotesize}
\begin{pmatrix}
    \underline{\bf 1} & \underline{\bf 2} & 0 & 1  \\
    0 & 0 & \underline{\bf 2}  & \underline{\bf 1} \\
    3 & 0 & 1 & \underline{\bf 1} 
\end{pmatrix}
\end{footnotesize}.
\]
When $A$ is the permutation matrix of a permutation $w$, the zigzag number $\zigzag(A)$ is the length of the longest increasing subsequence of $w$. More generally, if $A \mapsto (P,Q)$ under the Robinson-Schensted-Knuth correspondence, one has 
\begin{equation}
    \label{eq:zigzag-rsk}
    \zigzag(A) = \lambda_1 \text{ where $\lambda = (\lambda_1,\lambda_2,\dots)$ is the common shape of $P$ and $Q$.}
\end{equation}

Let $n \geq 0$ and consider weak compositions $\alpha = (\alpha_1,\dots,\alpha_k)$ and $\beta = (\beta_1,\dots,\beta_p)$ of $n$. An {\em $\alpha,\beta$-contingency table} is a $k \times p$ matrix $A = (a_{i,j})$ over $\ZZ_{\geq 0}$ with row sums $\alpha$ and column sums $\beta$. For example, the $3 \times 4$ matrix in the last paragraph is a $(4,3,5),(4,2,3,3)$-contingency table. If $\alpha = \beta = (1^n)$, contingency tables are permutation matrices. We write $\CCC_{\alpha,\beta}$ for the set of $\alpha,\beta$-contingency tables. 

Let $\FF$ be a field of characteristic zero, let $\xxx_{k \times p} = (x_{i,j})_{1 \leq i \leq k, \, 1 \leq j \leq p}$ be a $k \times p$ matrix of variables, and let $\FF[\xxx_{k \times p}]$ be the polynomial ring over these variables. The following quotient of $\FF[\xxx_{k \times p}]$ is our object of study.

\begin{defn}
    \label{def:contingency-quotient}
    Let $n \geq 0$ and let $\alpha = (\alpha_1,\dots,\alpha_k)$ and $\beta = (\beta_1,\dots,\beta_p)$ be weak compositions of $n$. Let $I_{\alpha,\beta} \subseteq \FF[\xxx_{k \times p}]$ be the ideal generated by $\dots$
    \begin{itemize}
        \item all row sums $x_{i,1} + \cdots + x_{i,p}$ for $1 \leq i \leq k$, 
        \item all column sums $x_{1,j} + \cdots + x_{k,j}$ for $1 \leq j \leq p$, 
        \item all monomials $x_{i,1}^{a_1} \cdots x_{i,p}^{a_p}$ in variables in the same row for which $a_1 + \cdots + a_p > \alpha_i$, and 
        \item all monomials $x_{1,j}^{b_1} \cdots x_{k,j}^{b_k}$ in variables in the same column for which $b_1 +\cdots + b_k > \beta_j$.
    \end{itemize}
    We denote the corresponding quotient of $\FF[\xxx_{k \times p}]$ by
    \[ R_{\alpha,\beta} := \FF[\xxx_{k \times p}] / I_{\alpha,\beta}.\]
\end{defn}

The ideal $I_{\alpha,\beta} \subseteq \FF[\xxx_{k \times p}]$ is homogeneous, so $R_{\alpha,\beta}$ is a graded ring. When $\alpha = \beta = (1^n)$, the quotient $R_{\alpha,\beta}$ was defined and studied by the second author \cite{Rhoades}.
The product of symmetric groups $\symm_k \times \symm_p$ acts on the matrix of variables $\xxx_{k \times p}$ by row and column permutation, and the polynomial ring $\FF[\xxx_{k \times p}]$ is thus a graded $\symm_k \times \symm_p$-module. The ideal $I_{\alpha,\beta}$ is in general not stable under the full $\symm_k \times \symm_p$-action. However, if $\Stab(\alpha) \subseteq \symm_k$ and $\Stab(\beta) \subseteq \symm_p$ are the stabilizers of the sequences $\alpha = (\alpha_1,\dots,\alpha_k)$ and $\beta = (\beta_1,\dots,\beta_p)$, the ideal $I_{\alpha,\beta}$ is stable under the subgroup $\Stab(\alpha) \times \Stab(\beta) \subseteq \symm_k \times \symm_p$. The quotient $R_{\alpha,\beta}$ is a graded $\Stab(\alpha) \times \Stab(\beta)$-module.

Algebraic properties of $R_{\alpha,\beta}$ are governed by combinatorial properties of contingency tables in $\CCC_{\alpha,\beta}$. Recall that the {\em Hilbert series} of a graded vector space $V = \bigoplus_{d \geq 0} V_d$ is $\Hilb(V;q) := \sum_{d \geq 0} \dim V_d \cdot q^d$. We prove (Corollary~\ref{cor:hilbert-series}) that the Hilbert series of $R_{\alpha,\beta}$ is encoded by the zigzag statistic via
\begin{equation}
    \label{eq:hilbert-intro}
    \Hilb(R_{\alpha,\beta};q) = \sum_{A \in \CCC_{\alpha,\beta}} q^{n - \zigzag(A)}.
\end{equation}
Equation~\eqref{eq:hilbert-intro} reduces to \cite[Cor. 3.13]{Rhoades} in the permutation matrix case. The equality \eqref{eq:hilbert-intro} of Hilbert series reflects a deeper relationship between $R_{\alpha,\beta}$ and the Robinson-Schensted-Knuth correspondence.

The RSK correspondence is a  bijection $A \mapsto (P,Q)$ from $\ZZ_{\geq 0}$-matrices with finite support to ordered pairs $(P,Q)$ of semistandard Young tableaux of the same shape.   RSK  is usually described in terms of an insertion algorithm.
In the permutation matrix case, Viennot gave \cite{Viennot} a beautiful `geometric' reformulation of this bijection using a `shadow line' construction. The {\em matrix-ball construction} (see e.g. \cite{Fulton}) generalizes Viennot's work to aribtrary $\ZZ_{\geq 0}$-matrices.
The Hilbert series formula \eqref{eq:hilbert-intro} will arise from a connection between the standard monomial theory of $R_{\alpha,\beta}$ and the matrix-ball construction. 

We use the matrix-ball construction to associate (Definition~\ref{def:m-monomial}) a monomial $\mb(A)$ of degree $n - \zigzag(A)$ to any contingency table $A \in \CCC_{\alpha,\beta}$. If $\prec$ is any diagonal term order (see Section~\ref{sec:Background}) on the monomials of $\FF[\xxx_{k \times p}]$, we prove (Theorem~\ref{thm:standard-monomial-basis}) that $\{ \mb(A)\,:\, A \in \CCC_{\alpha,\beta}\}$ is the standard monomial basis of $R_{\alpha,\beta}$ with respect to $\prec$, generalizing \cite[Thm. 3.12]{Rhoades}. The Hilbert series  \eqref{eq:hilbert-intro} is an immediate corollary. Since the construction $A \leadsto \mb(A)$ will encode an iteration of the matrix-ball construction, this result (loosely) says that the quotient ring $R_{\alpha,\beta}$ ``knows" the matrix-ball avatar of RSK.

Our results extend to the equivariant setting. The group $\Stab(\alpha) \times \Stab(\beta)$  acts on the set $\CCC_{\alpha,\beta}$ of contingency tables by row and column rearrangement; we write $\FF[\CCC_{\alpha,\beta}]$ for the associated permutation module. We prove (Corollary~\ref{cor:ungraded-module-structure}) that 
\begin{equation}
    R_{\alpha,\beta} \cong_{\Stab(\alpha) \times \Stab(\beta)} \FF[\CCC_{\alpha,\beta}]
\end{equation}
as ungraded $\Stab(\alpha) \times \Stab(\beta)$-modules. The ring $R_{\alpha,\beta}$ therefore gives a graded refinement of the action of $\Stab(\alpha) \times \Stab(\beta)$ on contingency tables. We give a description of this graded refinement in Theorem~\ref{thm:graded-module-structure}. We describe a technique involving symmetric functions for calculating the graded character of $R_{\alpha,\beta}$; see Problem~\ref{prob:psi-on-schur} and Proposition~\ref{prop:psi-on-h}.

The algebra of $R_{\alpha,\beta}$ and the combinatorics of $\CCC_{\alpha,\beta}$ are connected via the {\em orbit harmonics} technique of  deformation theory. Let $\FF^N$ be affine $N$-space over $\FF$ with coordinate ring $\FF[\xxx_N]$ and let $\Zpoints \subseteq \FF^N$ be a finite point set. We have the vanishing ideal
\begin{equation}
\II(\Zpoints) := \{ f \in \FF[\xxx_N] \,:\, f(\zzz) = 0 \text{ for all } \zzz \in \Zpoints \}.\end{equation}
Let $\FF[\Zpoints]$ be the $\FF$-vector space with basis $\Zpoints$. Since $\Zpoints$ is finite, by Lagrange interpolaion we have $\FF[\Zpoints] \cong \FF[\xxx_N]/\II(\Zpoints)$ as ungraded $\FF$-vector spaces. If we denote by $\gr \, \II(\Zpoints) \subseteq \FF[\xxx_N]$ the associated graded ideal of $\II(\Zpoints)$ and set 
\begin{equation}
\RRR(\Zpoints) := \FF[\xxx_N]/\gr \, \II(\Zpoints),
\end{equation}
there holds the further vector space isomorphism
\begin{equation}
\label{eq:orbit-harmonics-intro}
    \FF[\Zpoints] \cong \FF[\xxx_N]/\II(\Zpoints) \cong \RRR(\Zpoints)
\end{equation}
where $\RRR(\Zpoints)$ is a graded $\FF$-vector space. If $G \subseteq GL_N(\FF)$ is a finite matrix group and $\Zpoints$ is stable under the action of $G$, we may regard \eqref{eq:orbit-harmonics-background} as an isomorphism of ungraded $G$-modules where $\RRR(\Zpoints)$ is also a graded $G$-module. In geometric terms, the transition $\II(\Zpoints) \leadsto \gr \, \II(\Zpoints)$ is the flat limit which linearly deforms the locus $\Zpoints$ to a subscheme of degree $\# \Zpoints$ supported at the origin, as shown below in a case where $\# \Zpoints = 6$ is stabilized by a linear group $G \cong \symm_3$.

\begin{center}
 \begin{tikzpicture}[scale = 0.2]
\draw (-4,0) -- (4,0);
\draw (-2,-3.46) -- (2,3.46);
\draw (-2,3.46) -- (2,-3.46);

 \fontsize{5pt}{5pt} \selectfont
\node at (0,2) {$\bullet$};
\node at (0,-2) {$\bullet$};

\node at (-1.73,1) {$\bullet$};
\node at (-1.73,-1) {$\bullet$};
\node at (1.73,-1) {$\bullet$};
\node at (1.73,1) {$\bullet$};

\draw[thick, ->] (6,0) -- (8,0);

\draw (10,0) -- (18,0);
\draw (12,-3.46) -- (16,3.46);
\draw (12,3.46) -- (16,-3.46);

\draw (14,0) circle (15pt);
\draw(14,0) circle (25pt);
\node at (14,0) {$\bullet$};

 \end{tikzpicture}
\end{center}

Orbit harmonics provides a plethora of interested graded rings in algebraic combinatorics with ties to coinvariant theory \cite{Kostant}, Springer fibers \cite{HRS, Griffin}, delta operators \cite{HRS}, cyclic sieving \cite{OR}, Coxeter-Catalan combinatorics \cite{ARR}, Ehrhart theory \cite{ReinerRhoades}, torus-equivariant cohomology \cite{CMR,CDHP, GM}, and Donaldson-Thomas theory \cite{RRT}. When $\Zpoints$ is a combinatorially interesting locus, one often has an algebraically interesting  ring $\RRR(\Zpoints)$. Giving an explicit presentation of $\RRR(\Zpoints)$ as a quotient of $\FF[\xxx_N]$ by computing a finite generating set of $\gr \, \II(\Zpoints)$ is difficult in general.

We view the set $\CCC_{\alpha,\beta}$ of contingency tables as a  locus of points in the affine space $\Mat_{k \times p}(\FF)$ of $k \times p$ matrices over $\FF$.\footnote{Since $\FF$ has characteristic zero, this yields $\# \CCC_{\alpha,\beta}$ distinct matrices.} Identifying the coordinate ring of $\Mat_{k \times p}(\FF)$ with $\FF[\xxx_{k \times p}]$, the orbit harmonics quotient $\RRR(\CCC_{\alpha,\beta})$ is the quotient $\FF[\xxx_{k \times p}]/\gr \, \II(\CCC_{\alpha,\beta})$. We prove (Theorem~\ref{thm:standard-monomial-basis}) that
\begin{equation}
\label{eq:intro-orbit-harmonics-identification}
    \RRR(\CCC_{\alpha,\beta}) = R_{\alpha,\beta}
\end{equation}
so that $R_{\alpha,\beta}$ is the orbit harmonics quotient for the contingency table locus $\CCC_{\alpha,\beta}$.

This paper is not the first to apply orbit harmonics to matrix loci. In \cite{Rhoades}, the second author applied orbit harmonics to the locus $\symm_n = \CCC_{(1^n),(1^n)}$ of $n \times n$ permutation matrices, obtaining \eqref{eq:intro-orbit-harmonics-identification} in the permutation matrix case. It was proven \cite[Thm. 3.12]{Rhoades} that the standard monomial theory of $\RRR(\symm_n)$ is governed by the Viennot shadow construction on permutation matrices. Liu extended \cite{Liu} this work to the complex reflection group $G(r,1,n)$ of $r$-colored permutations. The rings $\RRR(\Zpoints)$ coming from various loci $\Zpoints \subseteq \Mat_{n \times n}(\FF)$ corresponding to involutions in $\symm_n$ were studied by Liu-Ma-Rhoades-Zhu \cite{LMRZ}.

While the main results in this paper are mostly extensions of results in \cite{Rhoades} from permutation matrices to general contingency tables, the {\em proofs} of our results are substantially more algebraically involved. Unlike in permutation setting $\alpha = \beta = (1^n)$\footnote{where the convenient \cite[Lem. 3.1]{Rhoades} applies}, for general compositions $\alpha$ and $\beta$ the task of finding explicit elements $f \in I_{\alpha,\beta}$ to study the Gr\"obner theory of $I_{\alpha,\beta}$ and bound the quotient $R_{\alpha,\beta} = \FF[\xxx_{k \times p}]/I_{\alpha,\beta}$ from above has proven daunting. To get around this issue, we introduce auxiliary quotient rings $R_\ddd$ involving a 1-dimensional array of variables (Definition~\ref{def:one-row-quotient}). By applying Lefschetz techniques and characterizing the Macaulay-inverse system associated to $R_\ddd$ (Proposition~\ref{prop:inverse-system-spanning}), we indirectly deduce the existence of $f \in I_{\alpha,\beta}$ with monomial support in a single row or column which have strategic leading terms. By applying certain polarization operators (Definition~\ref{def:polarization-operators}), we can `spread these elements  out' across multiple rows and columns, thus attaining sufficiently many diagonal leading terms to bound the quotient $R_{\alpha,\beta}$ from above. 

The rest of the paper is organized as follows. In {\bf Section~\ref{sec:Background}} we give background material on combinatorics,  representation theory, and commutative algebra. The technical {\bf Section~\ref{sec:One-row}} studies the one-row quotient rings $R_\ddd$ for later application to $R_{\alpha,\beta}$. {\bf Section~\ref{sec:Matrix}} recalls the matrix-ball construction of RSK and proves associated combinatorial results. In {\bf Section~\ref{sec:Hilbert}} we prove the identification \eqref{eq:intro-orbit-harmonics-identification} of $R_{\alpha,\beta}$ and $\RRR(\CCC_{\alpha,\beta})$ and calculate its standard monomial basis in terms of the matrix-ball construction. {\bf Section~\ref{sec:Module}} studies the graded module structure of $R_{\alpha,\beta}$. We close in {\bf Section~\ref{sec:Future}} with some open problems on log-concavity, top-heaviness, and graded Ehrhart theory.

\section{Background}
\label{sec:Background}

\subsection{Combinatorics} Let $n \geq 0$. A {\em weak composition} of $n$ is a sequence $\alpha = (\alpha_1,\dots,\alpha_k)$ in $\ZZ_{\geq 0}$ such that $\alpha_1 + \cdots + \alpha_k = n$. We write $\alpha \models_0 n $ to indicate that $\alpha$ is a weak composition of $n$ and $\ell(\alpha) = k$ for the number of parts of $\alpha$. For example, $\alpha = (1,2,0,2) \models_0 5$ satisfies $\ell(\alpha) = 4$.

Let $A = (a_{i,j})$ be a $k \times p$ matrix. The {\em row vector} $\row(A) = (\row(A)_1,\dots,\row(A)_k)$ and {\em column vector} $\col(A) = (\col(A)_1,\dots,\col(A)_p)$ of $A$ are defined by
\[ r_i := \sum_{j=1}^p a_{i,j}  \text{ for $1 \leq i \leq k$ and }  c_j := \sum_{i=1}^k a_{i,j} \text{ for $1 \leq j \leq p$}.\]
As in the introduction, if $\alpha,\beta \models_0 n$, a $\ZZ_{\geq 0}$-matrix $A$ is an {\em $\alpha,\beta$-contingency table} if $\row(A) = \alpha$ and $\col(A) = \beta$. We write $\CCC_{\alpha,\beta}$ for the family of $\alpha,\beta$-contingency tables.

A {\em partition} of $n$ is a weakly decreasing sequence $\lambda = (\lambda_1 \geq \cdots \geq \lambda_k)$ of positive integers with $\lambda_1 + \cdots + \lambda_k = n$. We write $\lambda \vdash n$ to indicate that $\lambda$ is a partition of $n$ and $\ell(\lambda)=k$ for the number of parts of $\lambda$. We identify partitions $\lambda\vdash n$ with their (English) Young diagrams consisting of $\lambda_i$ left-justified boxes in row $i$. For example, the Young diagram of $(3,2,2) \vdash 7$ is 
\[\begin{footnotesize}
    \begin{young} & &  \\ & \\ & \\  \end{young}.
\end{footnotesize}\]

Let $\lambda$ be a partition. A {\em semistandard Young tableau} (SSYT) of shape $\lambda$ is a filling $T: \lambda \to \ZZ_{>0}$ of its Young diagram which is weakly increasing across rows and strictly increasing down columns. We write $\shape(T) = \lambda$ for the shape of $\lambda$. An example SSYT of shape $(3,2,2)$ is as follows
\[ \begin{footnotesize}
    \begin{young} 1 & 1 & 2 \\  2 & 4 \\ 5 & 5 \end{young}.
\end{footnotesize}\]
The {\em content} of a SSYT $T$ is the weak composition $\alpha = (\alpha_1,\alpha_2,\dots)$ where $\alpha_i$ is the number of boxes labeled $i$. The SSYT above has content $(2,2,0,1,2)$. We write $\SSYT(\lambda,\alpha)$ for the family of SSYT $T$ with shape $\lambda$ and content $\alpha$. The cardinality of this set is given by the {\em Kostka number} $K_{\lambda,\alpha} := \# \SSYT(\lambda,\alpha)$.

Let $\alpha,\beta \models_0 n$ be weak compositions. The {\em Robinson-Schensted-Knuth correspondence} (RSK) is an explicit bijection $A \mapsto (P,Q)$ from contingency tables $A \in \CCC_{\alpha,\beta}$ to ordered pairs $(P,Q)$ of $n$-box SSYT of the same shape $\lambda \vdash n$ where $P$ has content $\alpha$ and $Q$ has content $\beta$; see e.g. \cite{Fulton,Sagan,Stanley}. Thanks to this bijection, one has 
\begin{equation}
    \# \CCC_{\alpha,\beta} = \sum_{\lambda \vdash n} K_{\lambda,\alpha}\cdot K_{\lambda,\beta}.
\end{equation}

\subsection{Representation theory}
Let $\symm_n$ be the symmetric group on $n$ letters and write $\FF[\symm_n]$ for its group algebra. If $\alpha = (\alpha_1,\dots,\alpha_k) \models_0 n$, we have the {\em parabolic subgroup} $\symm_\alpha := \symm_{\alpha_1} \times \cdots \times \symm_{\alpha_k} \subseteq \symm_n$.

Since $\FF$ has characteristic zero, irreducible representations of  $\symm_n$ over $\FF$ are indexed by partitions of $n$. If $\lambda \vdash n$ is a partition, we write $V^\lambda$ for the associated irreducible $\symm_n$-module. We recall the standard dictionary between $\symm_n$-representation theory and symmetric functions; see e.g. \cite{Sagan,Stanley} for more details.

Let $\Lambda = \bigoplus_{n \geq 0} \Lambda_n$ be the graded $\FF$-algebra of symmetric functions over an infinite alphabet $\xxx = (x_1,x_2,\dots)$. Vector space bases of $\Lambda_n$ are indexed by partitions $\lambda \vdash n$. We will use the {\em complete homogeneous basis} $\{h_\lambda\}$ and the {\em Schur basis} $\{s_\lambda\}$.

Let $V$ be a finite-dimensional $\symm_n$-module. There are unique multiplicities $c_\lambda \geq 0$ such that $V \cong \bigoplus_{\lambda \vdash n} c_\lambda V^\lambda$. The {\em Frobenius image} of $V$ is the symmetric function 
\begin{equation}
    \Frob(V) := \sum_{\lambda \vdash n} c_\lambda \cdot s_\lambda.
\end{equation}
For example, we have
\[ \Frob(V^\lambda) = s_\lambda \quad \text{and} \quad \Frob(M^\lambda) = h_\lambda \]
where $M^\lambda = \FF[\symm_n/\symm_\lambda] = \Ind_{\symm_\lambda}^{\symm_n}(\one_{\symm_\lambda})$ is the permutation action of $\symm_n$ on the  left cosets $\symm_n/\symm_\lambda$.

If $\mu = (\mu_1,\dots, \mu_r) \vdash n$, irreducible representations of the Young subgroup $\symm_\mu \subseteq \symm_n$ are indexed by $r$-tuples $(\lambda^{(1)},\dots, \lambda^{(r)})$ of partitions where $\lambda^{(i)} \vdash \mu_i$. We let $V^{(\lambda^{(1)},\dots, \lambda^{(r)})} := V^{\lambda^{(1)}} \otimes \cdots \otimes V^{\lambda^{(r)}}$ denote the $\symm_\mu$-irreducible associated to an $r$-tuple of partitions and set
\begin{equation}
    \Frob(V^{(\lambda^{(1)},\dots, \lambda^{(r)})}) := s_{\lambda^{(1)}} \otimes \cdots \otimes s_{\lambda^{(r)}} \in \Lambda_{\mu_1} \otimes \cdots \otimes \Lambda_{\mu_r}.
\end{equation}
Extending additively, we obtain an element $\Frob(V) \in \Lambda_{\mu_1} \otimes \cdots \otimes \Lambda_{\mu_r}$ for any $\symm_\mu$-module $V$.

\subsection{Commutative algebra} Let $\FF$ be a field of characteristic zero, let $\xxx_N = (x_1,\dots,x_N)$ be a list of $N$ variables, and let $\FF[\xxx_N]$ be the polynomial ring in these variables over $\FF$. A total order $\prec$ on the monomials in $\FF[\xxx_N]$ is a {\em term order} if 
\begin{itemize}
    \item we have $1 \preceq m$ for all monomials $m$, and
    \item if $m_1 \prec m_2$ then $m_1 \cdot m_3 \prec m_2 \cdot m_3$ for all monomials $m_1,m_2,m_3$.
\end{itemize}
For example, the {\em lexicographical} term order $<_{lex}$ is given by $x_1^{a_1} \cdots x_N^{a_N} <_{lex} x_1^{b_1} \cdots x_N^{b_N}$ if there exists $1 \leq i \leq N$ with $a_1 = b_1, \dots, a_{i-1}=b_{i-1},$ and $a_i < b_i$. We define lexicographical order $(a_1,\dots,a_N) <_{lex} (b_1,\dots,b_N)$ on integer sequences using the same rule.

Let $\prec$ be a term order on $\FF[\xxx_N]$. If $f \in \FF[\xxx_N]- \{0\}$ is a nonzero polynomial, we write $\initial_\prec(f)$ for the $\prec$-largest monomial which appears in $f$ with nonzero coefficient. If $I \subseteq \FF[\xxx_N]$ is an ideal, the {\em initial ideal} of $I$ is 
\begin{equation}
    \initial_\prec(I) := (\initial_\prec(f) \,:\, f \in I - \{0\}) \subseteq \FF[\xxx_N].
\end{equation}
A monomial $m \in \FF[\xxx_N]$ is a {\em standard monomial of $I$ with respect to $\prec$} if $m \notin \initial_\prec(I)$. The set of all standard monomials descends to a basis of $\FF[\xxx_N]/I$ known as the {\em standard monomial basis}. In particular, the standard monomial basis is uniquely determined from the ideal $I$ and the term order $\prec$.

For $1 \leq i \leq N$, let $\partial/\partial x_i: \FF[\xxx_N] \to \FF[\xxx_N]$ be the standard partial derivative operator. If $f = f(x_1,\dots,x_N) \in \FF[\xxx_N]$, we have an operator $\partial f: \FF[\xxx_N] \to \FF[\xxx_N]$ given by
\[ \partial f := f(\partial/\partial x_1, \dots, \partial/\partial x_N).\]
This gives rise to a map $\odot: \FF[\xxx_N] \times \FF[\xxx_N] \to \FF[\xxx_N]$ given by $f \odot g := (\partial f)(g)$. The $\odot$-action gives $\FF[\xxx_N]$ the structure of an $\FF[\xxx_N]$-module. If $I \subseteq \FF[\xxx_N]$ is a homogeneous ideal, the {\em Macaulay-inverse system} (or {\em harmonic space}) of $I$ is 
\begin{equation}
    I^\perp := \{ g \in \FF[\xxx_N] \,:\, f \odot g = 0 \text{ for all $f \in I$} \}.
\end{equation}
The inverse system $I^\perp$ is a graded $\FF$-linear subspace of $\FF[\xxx_N]$. Furthermore, we have 
\begin{equation}
\label{eq:qoutient-harmonic-hilbert}
    \Hilb(\FF[\xxx_N]/I;q) = \Hilb(I^\perp ;q).
\end{equation}
Equation~\ref{eq:qoutient-harmonic-hilbert} uses the fact that $\FF$ has characteristic zero. For general fields, the definition of $I^\perp$ must be adjusted using divided power rings; see e.g. \cite{ReinerRhoades}.

If $f \in \FF[\xxx_N] - \{0\}$ is a nonzero polynomial, write $\tau(f)$ for the top-degree homogeneous component of $f$. That is, if $f = f_d + \cdots + f_1 + f_0$ where $f_i$ is homogeneous of degree $i$ and $f_d \neq 0$, we have $\tau(f) = f_d$. If $I \subseteq \FF[\xxx_N]$ is an ideal, the {\em associated graded ideal} $\gr \, I \subseteq \FF[\xxx_N]$ is 
\begin{equation}
    \gr \, I := (\tau(f) \,:\, f \in I, \, f \neq 0 ).
\end{equation}
The ideal $\gr \, I$ is homogeneous by construction so that $\FF[\xxx_N]/\gr \, I$ is a graded ring. If $\BBB$ is a set of homogenoeus polynomials which descends to an $\FF$-basis of $\FF[\xxx_N]/\gr \, I$, then $\BBB$ also descends to an $\FF$-basis of $\FF[\xxx_N]/I$; see e.g. \cite[Lem. 3.15]{Rhoades}.

We regard $\FF[\xxx_N]$ as the coordinate ring of affine $N$-space $\FF^N$. If $\Zpoints \subseteq \FF^N$ is a finite locus, we have the vanishing ideal
\begin{equation}
    \II(\Zpoints) := \{ f \in \FF[\xxx_N] \,:\, f(\zzz) = 0 \text{ for all $\zzz \in \Zpoints$} \}.
\end{equation}
Let $\FF[\Zpoints]$ be the ring of functions $\Zpoints \to \FF$ with pointwise operations. Since $\Zpoints$ is finite, Lagrange interpolation gives the identification of ungraded $\FF$-algebras $\FF[\Zpoints] = \FF[\xxx_N]/\II(\Zpoints)$. By the last paragraph, we have the further isomorphism
\begin{equation}
    \label{eq:orbit-harmonics-background}
    \FF[\Zpoints] = \FF[\xxx_N]/\II(\Zpoints) \cong_\FF \FF[\xxx_N]/\gr \, \II(\Zpoints)
\end{equation}
of $\FF$-vector spaces\footnote{This will never be an isomorphism of $\FF$-algebras unless $\# \Zpoints \leq 1$.} where 
\begin{equation}
    \RRR(\Zpoints) := \FF[\xxx_N]/\gr \, \II(\Zpoints)
\end{equation}
has the additional structure of a graded $\FF$-vector space. If $G \subseteq GL_N(\FF)$ is a finite matrix group acting on $\FF^N$ and the locus $\Zpoints$ is $G$-stable, the identifications \eqref{eq:orbit-harmonics-background} are isomorphisms of $G$-modules where $\RRR(\Zpoints) = \FF[\xxx_N]/\gr \, \II(\Zpoints)$ has the additional structure of a graded $G$-module.

We consider polynomial rings over matrices of variables. If $\xxx_{k \times p} = (x_{i,j})$ is a $k \times p$ matrix of variables and $A = (a_{i,j}) \in \Mat_{k \times p}(\ZZ_{\geq 0})$, we write $\xxx^A := \prod_{i=1}^k \prod_{j=1}^p x_{i,j}^{a_{i,j}}$ for the corresponding monomial in $\FF[\xxx_{k \times p}]$. The set $\{ \xxx^A \,:\, A \in \Mat_{k \times p}(\ZZ_{\geq 0}) \}$ is an $\FF$-basis of $\FF[\xxx_{k \times p}]$. The {\em row degree} $\rdeg(\xxx^A) = (r_1,\dots,r_k)$ and {\em column degree} $\cdeg(\xxx^A) = (c_1,\dots,c_p)$ of the monomial $\xxx^A$ are inherited from the matrix $A$ via
\[ \rdeg(\xxx^A) := \row(A) \quad \text{and} \quad \cdeg(\xxx^A) := \col(A).\]
The {\em diagonal degree} of $\xxx^A$ is \[\ddeg(\xxx^A) = (d_1,d_2,\dots,d_{k+p-1}) \text{ where } d_q = \sum_{i+j-1=q} a_{i,j}.\] 
For example, if $A$ is the $3 \times 4$ matrix
\begin{footnotesize}
\[ A = \begin{pmatrix} 1 & 2 & 0 & 1 \\ 0 & 2 & 0 & 1 \\ 3 & 0 & 1 & 1 \end{pmatrix}\]
\end{footnotesize}
we have \[\rdeg(\xxx^A) = (4,3,5), \quad  \cdeg(\xxx^A) = (4,4,1,3), \quad  \text{and}  \quad \ddeg(\xxx^A) = (1,2,5,1,2,1).\]
A polynomial $f \in \FF[\xxx_{k \times p}]$ is {\em row-homogeneous} (resp. {\em column-homogeneous}) if every monomial $\xxx^A$ appearing in $f$ has the same row degree (resp. column degree).

A term order $\prec$ on the monomials of $\FF[\xxx_{k \times p}]$ is {\em diagonal} if 
\begin{center}
$m \prec m'$ whenever $\ddeg(m) <_{lex} \ddeg(m')$.
\end{center}
Diagonal term orders are easy to generate; simply take lexicographical order with respect to any ordering of the variables $x_{i,j}$ for which $x_{i,j} \succ x_{i',j'}$ whenever $i + j < i' + j'$. Diagonal term orders may be regarded as two-dimensional extensions of lexicographical order.

\begin{observation}
    \label{obs:diagonal-restriction}
    Let $\prec$ be a diagonal term order on the monomials of $\FF[\xxx_{k \times p}]$. Upon restriction to monomials in any row or column of the variable matrix $\xxx_{k \times p}$, the term order $\prec$ is lexicographical order on monomials in the variables in that row or column.
\end{observation}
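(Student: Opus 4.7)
The plan is to unpack the definition of diagonal term order and observe that, when restricted to monomials supported in a single row, the map sending a monomial to its diagonal degree simply pads the exponent vector with zeros. Consequently, lexicographic comparison of diagonal degrees will reduce to lexicographic comparison of the exponent vectors themselves, yielding the claim; the column case will follow by a symmetric argument.

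Concretely, I would first fix a row index $i$ and an arbitrary monomial $m = x_{i,1}^{a_1} \cdots x_{i,p}^{a_p}$ supported in that row. Computing $\ddeg(m) = (d_1, \ldots, d_{k+p-1})$ directly from $d_q = \sum_{i'+j-1=q} a_{i',j}$, the only nonzero contributions come from the entries $a_{i,j} = a_j$, each landing in position $i+j-1$. Thus $\ddeg(m) = (0, \ldots, 0, a_1, a_2, \ldots, a_p, 0, \ldots, 0)$ with the nonzero block occupying positions $i, i+1, \ldots, i+p-1$.

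The essential step is then the remark that this zero-padding preserves lex order: if $m$ and $m'$ are distinct row-$i$ monomials with exponent vectors agreeing through index $j-1$ and differing at index $j$, their diagonal degree sequences agree through position $i+j-2$ and differ at position $i+j-1$, so the first coordinate of disagreement is merely shifted. The definition of a diagonal term order then forces the $\prec$-comparison to match the lex comparison of the exponents. A critical (though trivial) observation is that distinct row-$i$ monomials always have distinct diagonal degrees, so the freedom a diagonal term order has to break ties among monomials of equal diagonal degree never intervenes on row-$i$ monomials; every diagonal term order must therefore restrict to the \emph{unique} lex order on that row with $x_{i,1} \succ x_{i,2} \succ \cdots \succ x_{i,p}$. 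The column case will follow from the identical argument with rows and columns interchanged, using that a column-$j$ monomial $x_{1,j}^{b_1} \cdots x_{k,j}^{b_k}$ has diagonal degree $(0, \ldots, 0, b_1, \ldots, b_k, 0, \ldots, 0)$ with the nonzero block in positions $j, j+1, \ldots, j+k-1$. I anticipate no real obstacle here: the content is essentially that within a single row or column the diagonal degree losslessly encodes the exponent vector, so the two lex orders coincide after stripping the outer zeros.
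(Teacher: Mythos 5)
Your proof is correct: the paper states this as an Observation with no proof, and your argument---that a row (or column) monomial's diagonal degree is just its exponent vector padded by zeros, that distinct such monomials therefore have distinct diagonal degrees so the diagonal condition forces every comparison, and that zero-padding preserves the lexicographic comparison---is precisely the evident justification the paper leaves implicit. The identification of the restricted order as lex with $x_{i,1} \succ x_{i,2} \succ \cdots \succ x_{i,p}$ (and $x_{1,j} \succ \cdots \succ x_{k,j}$ in a column) also matches how the paper later invokes the observation via Corollary~\ref{cor:deduced-polynomials}.
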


\section{One-row quotients}
\label{sec:One-row}

In this technical section, we study the following graded quotient of a polynomial ring in $n$ variables. The only result we will use in future sections is Corollary~\ref{cor:deduced-polynomials}. On a first reading, it may be best to skip this section and accept Corollary~\ref{cor:deduced-polynomials} on faith.

\begin{defn}
    \label{def:one-row-quotient}
    Let $\ddd = (d_1,\dots,d_n)$ be a list of $n$ nonnegative integers and set $d := d_1 + \cdots + d_n$. Let $I_\ddd \subseteq \FF[\xxx_n]$ be the ideal with generating set 
    \[ I_\ddd = (x_1^{d_1 + 1}, \dots, x_n^{d_n + 1}, x_1 + \cdots + x_n).\]
    Let $R_\ddd := \FF[\xxx_n]/I_\ddd$ be the corresponding quotient ring.
\end{defn}

Roughly speaking, the ideal $I_\ddd$ of Definition~\ref{def:one-row-quotient} is a kind of restriction of the contingency table ideals $I_{\alpha,\beta}$ to a single row or column. The goal of this section is to calculate the standard monomial basis of $R_\ddd$ with respect to the lexicographical term order.   As a first step, we compute the Hilbert series of $R_\ddd$ using Lefschetz theory.

\subsection{Lefschetz theory and Hilbert series} Let $A = \bigoplus_{i=0}^d A_i$ be a graded $\FF$-algebra with $A_d \neq 0$. The algebra $A$ satisfies {\em Poincar\'e duality} if $A_d \cong \FF$ and the canonical map $A_i \otimes A_{d-i} \to A_d \cong \FF$ is a perfect pairing for all $0 \leq i \leq d$. In this case, an element $L \in A_1$ is a {\em Lefschetz element} if the map $(-) \times L^{d-2i}: A_i \to A_{d-1}$ is bijective for all $i \leq \frac{d}{2}$. The following result is known to experts.

\begin{proposition}
    \label{prop:lefschetz} Let $\ddd = (d_1,\dots,d_n) \in (\ZZ_{\geq 0})^n$ satisfy $\sum_{i=1}^n d_i = d$ and let $J_\ddd \subseteq \FF[\xxx_n]$ be the ideal 
    \[ J_\ddd := (x_1^{d_1+1},\dots,x_n^{d_n+1}).\]
    The quotient $\FF[\xxx_n]/J_\ddd$ satisfies Poincar\'e duality and $L := x_1 + \cdots + x_n$ is a Lefschetz element. 
\end{proposition}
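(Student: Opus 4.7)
The plan is to establish the two assertions separately, exploiting the decomposition
\[ \FF[\xxx_n]/J_\ddd \cong \bigotimes_{i=1}^n \FF[x_i]/(x_i^{d_i+1}) \]
as graded $\FF$-algebras.

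For Poincar\'e duality, each factor $\FF[x_i]/(x_i^{d_i+1})$ is visibly PD of socle degree $d_i$, with top element $x_i^{d_i}$ and nondegenerate pairing $x_i^a \cdot x_i^{d_i-a} = x_i^{d_i}$. Since PD is preserved under graded tensor products (the top element of $A \otimes B$ is the tensor of top elements, and the pairing on $A \otimes B$ is the tensor of pairings), an induction on $n$ yields PD for $\FF[\xxx_n]/J_\ddd$ with socle degree $d = \sum_i d_i$ and socle generator $x_1^{d_1} \cdots x_n^{d_n}$.

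For the Lefschetz assertion, my preferred route is geometric. Identify $\FF[\xxx_n]/J_\ddd$ with the cohomology ring of $X := \PP^{d_1} \times \cdots \times \PP^{d_n}$ under the standard halving of grading, with $H^{2k}(X;\FF)$ sitting in degree $k$, and $x_i$ corresponding to the pullback of the hyperplane class of the $i$th factor. Then $L = x_1 + \cdots + x_n$ is the first Chern class of the ample line bundle $\mathcal{O}(1) \boxtimes \cdots \boxtimes \mathcal{O}(1)$ on the smooth projective variety $X$, and the Hard Lefschetz theorem delivers the bijectivity of $L^{d-2i} : (\FF[\xxx_n]/J_\ddd)_i \to (\FF[\xxx_n]/J_\ddd)_{d-i}$ for $i \leq d/2$. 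The statement descends from $\CC$ to an arbitrary characteristic-zero $\FF$ by flat base change along $\QQ \hookrightarrow \FF$.

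The main obstacle is really this Lefschetz step: it is the content of a nontrivial theorem and does fail in positive characteristic. One may alternatively give a purely algebraic proof by invoking the Stanley--Watanabe result that in characteristic zero the strong Lefschetz property is preserved under tensor products of graded Artinian algebras, with the sum of Lefschetz elements of the factors serving as a Lefschetz element of the product; the SLP for each factor $\FF[x_i]/(x_i^{d_i+1})$ with Lefschetz element $x_i$ is trivial, as its graded pieces are one-dimensional. Either route reduces the proposition to a standard input, so I would simply cite whichever is most natural for the downstream use of Corollary~\ref{cor:deduced-polynomials}.
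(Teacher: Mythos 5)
Your proof is correct, but it takes a genuinely different route from the paper. For Poincar\'e duality both arguments are essentially the same easy observation (the paper just says it is immediate from the definitions; your tensor-product formulation makes the socle generator $x_1^{d_1}\cdots x_n^{d_n}$ explicit). For the Lefschetz property, the paper argues combinatorially via Stanley's theory of differential posets: it identifies $\FF[\xxx_n]/J_\ddd$ with the vector space $\FF[P_\ddd]$ spanned by the product of chains $P_\ddd = C_{d_1}\times\cdots\times C_{d_n}$, observes that multiplication by $L$ becomes the up operator $U$, and cites Stanley's results for the bijectivity of $U^{d-2i}$ on ranks $i<\tfrac{d}{2}$. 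You instead invoke Hard Lefschetz for the ample class $c_1(\mathcal{O}(1)\boxtimes\cdots\boxtimes\mathcal{O}(1))$ on $\PP^{d_1}\times\cdots\times\PP^{d_n}$, with a base-change remark to pass from $\QQ$ (or $\CC$) coefficients to an arbitrary characteristic-zero $\FF$ --- a route the paper mentions only as an a posteriori geometric interpretation after its proof --- or, alternatively, the Stanley--Watanabe fact that monomial complete intersections have the strong Lefschetz property in characteristic zero together with preservation of SLP under tensor products. All three routes reduce the proposition to a standard but nontrivial external input; the paper's differential-poset argument stays entirely within algebraic combinatorics and avoids any base-change discussion, while your geometric/SLP versions are shorter to state and make the role of ampleness and of characteristic zero more transparent. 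Your handling of the details (one-dimensional graded pieces of each factor, bijectivity preserved under field extension) is sound, so either citation would serve the downstream use in Corollary~\ref{cor:deduced-polynomials} equally well.
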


\begin{proof}
    Poincar\'e duality is easy from the definitions. The Lefschetz property follows from Stanley's theory of differential posets \cite{StanleyDifferential} as follows.
    
    Let $C_m$ be the chain with $m$ elements and consider the direct product $P_\ddd := C_{d_1} \times \cdots \times C_{d_n}$ with its componentwise partial order. The graded poset $P_\ddd$ is a differential poset \cite[Prop. 5.1]{StanleyDifferential}. The rank function on $P_\ddd$ gives a grading on the vector space $\FF[P_\ddd]$ with respect to which there holds an isomorphism $\FF[\xxx_n]/J_\ddd \cong \FF[P_\ddd]$ of graded $\FF$-vector spaces. Multiplication by $L$ translates under this isomorphism to the linear operator $U: \FF[P_\ddd]\to\FF[P_\ddd]$ given by $U: x \mapsto \sum_{x \lessdot y} y$. (Here $x \lessdot y$ means that $y$ covers $x$ in the poset $P_\ddd$.) If  $i < \frac{d}{2}$, \cite[Ex. 4.13]{StanleyDifferential} proves that the map $U^{d-2i}: \FF[P_\ddd]_i \to \FF[P_\ddd]_{d-i}$ is a bijection.
\end{proof}

Proposition~\ref{prop:lefschetz} has the following geometric interpretation. Let $\PP^m$ be the $m$-dimensional complex projective space of lines through the origin in $\CC^{m+1}$. The product $\PP^{d_1} \times \cdots \times \PP^{d_n}$ has singular cohomology presentation
\begin{equation}
    H^*(\PP^{d_1} \times \cdots \times \PP^{d_n};\FF) = \FF[\xxx_n]/J_\ddd.
\end{equation}
Here the variable $x_i$ corresponds to the first Chern class $c_1(\LLL_i) \in H^2(\PP^{d_1} \times \cdots \times \PP^{d_n};\FF)$ where $\LLL_i$ is the tautological line bundle over the $i^{th}$ factor of $\PP^{d_1} \times \cdots \times \PP^{d_n}$. Since $\PP^{d_1} \times \cdots \times \PP^{d_n}$ is a smooth complex projective variety, its cohomology ring satisfies Poincar\'e duality and admits a Lefschetz element. Proposition~\ref{prop:lefschetz} says that $L = x_1 + \cdots + x_n$ is one such Lefschetz element. We will use Proposition~\ref{prop:lefschetz} to obtain the Hilbert series of $\FF[\xxx_d]/I_\ddd$; see \cite{KR} for an application of Proposition~\ref{prop:lefschetz} to fermionic diagonal coinvariant rings.

 For any $i$, multiplication by $L = x_1 + \cdots + x_n$ gives a map $(-) \times L : (\FF[\xxx_n]/J_\ddd)_i \to (\FF[\xxx_n]/J_\ddd)_{i+1}$. By Proposition~\ref{prop:lefschetz} this map is injective for $i < \frac{d}{2}$ and surjective for $i \geq \frac{d}{2}$.
 Since $I_\ddd = J_\ddd + (L)$, the Hilbert series of $R_\ddd$ is given by
\begin{multline}
    \label{eq:one-row-hilbert}
    \Hilb(R_\ddd;q) = \trunc_{\leq \frac{d}{2}} \left\{  (1-q) \times \Hilb(\FF[\xxx_n]/J_\ddd;q)\right\} \\ = \trunc_{\leq \frac{d}{2}} \left\{  (1-q) \times \prod_{i=1}^n (1+q+ \cdots + q^{d_i})\right\}.
\end{multline}
Here $\trunc_{\leq \frac{d}{2}}\{-\}$ is the operator which truncates power series to degrees $\leq \frac{d}{2}$. 
Evaluating \eqref{eq:one-row-hilbert} at $q \to 1$ gives the vector space dimension of $R_\ddd$.  We have the following combinatorial interpretation of $\dim_\FF R_\ddd$.

\begin{lemma}
    \label{lem:dimension-tableau}
    Let $\ddd = (d_1,\dots,d_n)$ with $d := \sum_{i=1}^n d_i$. Write $\TTT_\ddd$ for the set of two-row rectangular SSYT which contain $\leq d_i$ copies of $i$ for each $1 \leq i\leq n$. The following quantities are equal.
    \begin{enumerate}
        \item The vector space dimension $\dim_\FF R_\ddd$.
        \item The number of sequences $(a_1 \leq \cdots \leq a_m)$ which arise as the first row of a tableau $T \in \TTT_\ddd$.
        \item The number of sequences $(b_1 \leq\cdots\leq b_m)$ which arise as the second row of a tableau $T \in \TTT_\ddd$.
    \end{enumerate}
\end{lemma}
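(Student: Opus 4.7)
First I would compute $\dim_\FF R_\ddd$ via Proposition~\ref{prop:lefschetz}. Setting $A := \FF[\xxx_n]/J_\ddd$, one has $\Hilb(A;q) = \prod_{i=1}^n(1+q+\cdots+q^{d_i}) = \sum_k c_k q^k$, and the Lefschetz property says that $\times L : A_{k-1}\to A_k$ is injective for $k\le d/2$ and surjective for $k\ge d/2$. Hence $R_\ddd = A/LA$ has Hilbert series $\sum_{k\le\lfloor d/2\rfloor}(c_k - c_{k-1})q^k$, and telescoping yields $\dim_\FF R_\ddd = c_{\lfloor d/2\rfloor}$. Combinatorially, $c_{\lfloor d/2\rfloor}$ counts weakly increasing sequences of length $\lfloor d/2\rfloor$ in $[n]$ with at most $d_i$ copies of $i$.

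Next I would prove (2) $= c_{\lfloor d/2\rfloor}$ via the refined identity that, for each $0\le m\le \lfloor d/2\rfloor$, the number of distinct length-$m$ first rows appearing in some $T\in\TTT_\ddd$ equals $c_m - c_{m-1}$; summing in $m$ then gives (2) $= c_{\lfloor d/2\rfloor}$. The characterization used here is that a multiplicity vector $(r_1,\dots,r_n)$ with $r_i\le d_i$ and $\sum r_i=m$ arises as the first-row content of some $T\in\TTT_\ddd$ if and only if
\[\sum_{i>k} d_i \;\ge\; r_k + 2\sum_{i>k} r_i \qquad \text{for all } k=1,\dots,n;\]
this characterization comes from the standard analysis of the maximum feasible $S_n = \sum_i s_i$ subject to the SSYT column-strictness $\sum_{i\le k} s_i \le \sum_{i<k} r_i$ and the content bound $s_i\le d_i - r_i$. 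I would then establish the refined identity by constructing an explicit bijection $\Phi$ from non-extendable length-$m$ content-bounded multiplicity vectors to \emph{all} length-$(m-1)$ content-bounded multiplicity vectors. The natural candidate for $\Phi$ locates the smallest index $k^*$ at which the above inequality fails and performs a canonical decrement.

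For (2) $=$ (3), I would use the tableau complement $T \mapsto T^c$ defined on $T = \begin{pmatrix} a_1& \cdots & a_m\\ b_1 & \cdots & b_m\end{pmatrix}$ by
\[T^c := \begin{pmatrix} n+1-b_m & \cdots & n+1-b_1 \\ n+1-a_m & \cdots & n+1-a_1\end{pmatrix}.\]
A direct check shows $T^c$ is a valid SSYT of shape $(m,m)$ and lies in $\TTT_{\ddd^{\mathrm{rev}}}$ with $\ddd^{\mathrm{rev}}:=(d_n,\dots,d_1)$, and the map $T\mapsto T^c$ carries distinct second rows of $\TTT_\ddd$ bijectively onto distinct first rows of $\TTT_{\ddd^{\mathrm{rev}}}$. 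Since $R_\ddd\cong R_{\ddd^{\mathrm{rev}}}$ via $x_i\leftrightarrow x_{n+1-i}$, applying the just-proven equality (1)$=$(2) to $\ddd^{\mathrm{rev}}$ yields (1) $=$ (3) for $\ddd$. The main obstacle is the construction of $\Phi$ in the preceding paragraph: distinct non-extendable multiplicity vectors can fail the feasibility inequalities at different indices $k^*$, and the decrement rule must be specified carefully so that the inverse is well-defined and recovers precisely the non-extendable vectors.
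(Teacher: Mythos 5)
Your proposal follows the same skeleton as the paper's proof: compute $\Hilb(R_\ddd;q)$ from the Lefschetz property, so that (in the paper's notation, with $c_m = \#\WWW_{\ddd,m}$) the dimension of $R_\ddd$ telescopes to $c_{\lfloor d/2\rfloor}$, and then prove the refined claim that the number of distinct $m$-column first rows of tableaux in $\TTT_\ddd$ equals $c_m - c_{m-1}$, by matching the non-extendable content vectors of size $m$ bijectively with \emph{all} content vectors of size $m-1$. Your feasibility criterion $\sum_{i>k}d_i \ge r_k + 2\sum_{i>k} r_i$ is a correct (prefix/min-cut) characterization of which content vectors extend to a tableau in $\TTT_\ddd$, and your complementation map $T \mapsto T^c$, which reduces (1) $=$ (3) for $\ddd$ to (1) $=$ (2) for $\ddd^{\mathrm{rev}}$, is a clean way to handle the part the paper dismisses as ``similar.''

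The genuine gap is the bijection $\Phi$ itself: that bijection is the entire content of the lemma beyond routine Hilbert series manipulation, and you leave it unconstructed (your last sentence concedes exactly this). Worse, the ``natural candidate'' you name — locate the smallest index $k^*$ where the inequality fails and decrement there — does not work as stated. Take $\ddd = (2,1,1,2)$, $m = 3 = d/2$, $r = (0,1,1,1)$: this vector is non-extendable, the inequality fails already at $k^* = 1$, but $r_1 = 0$ cannot be decremented; the inequality in fact fails at every index, while the correct entry to decrement is $r_2$. The paper gets the right index via a greedy right-to-left ``dotting/saturation'' procedure: one decrements the leftmost entry that is \emph{unsatisfied} for this procedure (which, as the example shows, need not be the smallest index violating the cut inequality), and the inverse map increments the rightmost \emph{unsaturated} entry after running the same procedure on a vector of size $m-1$. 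Proving that these two operations are mutually inverse, and that the image of the increment map is precisely the set of non-extendable vectors, is where the real work of the lemma lies; your plan identifies the obstacle but does not overcome it.
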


The second author is grateful to Tanny Libman for discussions related to Lemma~\ref{lem:dimension-tableau}.

\begin{proof}
    We only prove (1) = (2); the proof of (1) = (3) is similar and left to the reader. Let $\WWW_{\ddd,m}$ be the set of weak compositions 
    \[ \WWW_{\ddd,m} := \{ \beta \models_0 m \,:\, \ell(\beta)= n \text{ and } \beta_i \leq d_i \text{ for all $1 \leq i \leq n$} \}. \]Then $\dim_\FF (\FF[\xxx_n]/J_\ddd)_m = \# \WWW_{\ddd,m}$ where $(\FF[\xxx_n]/J_\ddd)_m$ is the degree $m$ piece of $\FF[\xxx_n]/J_\ddd$. The Hilbert series formula \eqref{eq:one-row-hilbert} implies
    \begin{equation}
    \label{eq:one-row-piece-dimension}
        \dim_\FF (R_\ddd)_m =
        \begin{cases}
        \# \WWW_{\ddd,m} - \# \WWW_{\ddd,m-1} & m \leq \frac{d}{2}, \\
        0 & m > \frac{d}{2}.
        \end{cases}
    \end{equation}
    We express the set $\TTT_\ddd$ of tableaux as a disjoint union
    \begin{equation}
        \TTT_d = \bigsqcup_{m=0}^{\lfloor \frac{d}{2} \rfloor} \TTT_{\ddd,m} \quad \text{where} \quad 
        \TTT_{\ddd,m} := \{ T \in \TTT_d \,:\, T \text{ has $m$ columns} \}.
    \end{equation}
    We have a function
    \begin{equation}
        \varphi: \TTT_{\ddd,m} \longrightarrow \WWW_{\ddd,m}
    \end{equation}
    given by $\varphi: T \mapsto (c_1,\dots,c_n)$ where $c_i$ is the number of $i$'s in the first row of $T$.
    For example, if $n = 5, \ddd = (1,3,3,2,2),$ and $m=4$ we have
    \[ T = 
    \begin{footnotesize}
        \begin{young} 1 & 2 & 2 & 4 \\ 2 & 3 & 4 & 5  \end{young}
    \end{footnotesize} \in \TTT_{\ddd,m}\]
    with $\varphi(T) = (1,2,0,1,0) \in \WWW_{\ddd,m}$. We will prove (1) = (2) by showing
    \begin{equation}
        \label{eq:phi-image-goal}
        \# \image(\varphi) = \# \WWW_{\ddd,m} - \# \WWW_{\ddd,m-1} \quad \text{for all $m \leq \frac{d}{2}$.}
    \end{equation}
    The idea in proving Equation~\eqref{eq:phi-image-goal} is to construct an injective map
    \begin{equation}
        \psi: \WWW_{\ddd,m-1} \hookrightarrow \WWW_{\ddd,m}
    \end{equation}
    for each $m \leq \frac{d}{2}$ such that we have the disjoint union
    \begin{equation}
        \WWW_{\ddd,m} = \image(\varphi) \sqcup \image(\psi).
    \end{equation}
    
    The function $\psi: \WWW_{\ddd,m-1} \to \WWW_{\ddd,m}$ is constructed as follows. Start with a weak composition $\beta = (\beta_1,\beta_2,\dots,\beta_n) \in \WWW_{\ddd,m-1}$. We use the running example of 
    \[ \ddd = (3,2,3,3,2,2) \quad \text{and} \quad \beta = (3,0,0,2,1,0)\]
    so that $n = 6$ and $m = 7$. We will place dots over various entries in the composition $\beta$. An entry $\beta_i$ is called {\em saturated} if the number of dots above that entry plus $\beta_i$ equals $d_i$. We perform the following procedure.
    \begin{enumerate}
        \item Working from right to left, place $\beta_i$ dots above  unsaturated entries strictly right of, and as close as possible to, position $i$. If there are insufficiently many unsaturated entries to do this, place as many dots as possible and call $\beta_i$ {\em unsatisfied}.
        \item Define $\psi(\beta)$ by incrementing the rightmost unsaturated entry by 1.
    \end{enumerate}
    The dotting procedure in our example proceeds as follows:
    \[ (3,0,0,2,1,0) \leadsto (3,0,0,2,1,\dot{0}) \leadsto (3,0,0,2,\dot{1},\ddot{0}) \leadsto (3,\ddot{0},\dot{0},2,\dot{1},\ddot{0})\]
    where we used $\ddd = (3,2,3,3,2,2)$ to determine the saturation condition at each stage. In this case, every entry is satisfied and the entries $\dot{0}$ and $2$ are unsaturated. Since $2$ is the rightmost unsaturated entry, we increment the 2 to obtain
    \[ \psi: (3,0,0,2,1,0) \mapsto (3,0,0,3,1,2).\]
    Observe that the new 3 is the leftmost (and in this case, the only) unsatisfied element of $\psi(\beta)$.

    We claim that $\psi$ is a well-defined function $\WWW_{\ddd,m-1} \to \WWW_{\ddd,m}$. Since $m \leq \frac{d}{2}$ by assumption and $\beta_i \leq d_i$ for all $i$, there will be at least one unsaturated entry after completing Step (1), so that in Step (2) there will be an unsaturated entry $\beta_i$ to increment. Since $\beta_i < d_i$ for any unsaturated entry, we have $\varphi(\beta) \in \WWW_{\ddd,m}$. This concludes the proof that $\psi: \WWW_{\ddd,m-1} \to \WWW_{\ddd,m}$ is a well-defined function.

    The next step is to prove that $\psi: \WWW_{\ddd,m-1} \to \WWW_{\ddd,m}$ is an injection. To do this, we show that any $\beta \in \WWW_{\ddd,m-1}$ can be recovered from $\psi(\beta)$.
    The key observation is that the element $\beta_i$ which was incremented to $\beta_i + 1$ in the calculation $\beta \mapsto \psi(\beta)$ is the leftmost unsatisfied element in $\psi(\beta)$. Indeed, when performing Step (1) it is impossible to place $\beta_i + 1$ dots when reaching position $i$ since the elements to the right of position $i$ are saturated after $\beta_i$ dots are placed (here we use that $\beta_i$ is unsaturated in $\beta$). The entry $\beta_i + 1$ is therefore unsatisfied in $\psi(\beta)$. Since $\beta_i$ is unsaturated in $\beta$, we also know that $\beta_i + 1$ is the leftmost unsatisfied entry in $\psi(\beta)$ since any unsatisfied entry to the left of $\beta_i + 1$ could have distributed dots over $\beta_i + 1$ to achieve satisfaction. Therefore, a given elment $\beta \in \WWW_{\ddd,m-1}$ may be recovered from $\psi(\beta)$ by applying Step (1) to $\psi(\beta)$ and decrementing the leftmost unsatisfied element by 1. We conclude that $\psi: \WWW_{\ddd,m-1} \hookrightarrow \WWW_{\ddd,m}$ is an injection.

    Finally, we prove the disjoint union decomposition $\WWW_{\ddd,m} = \image(\varphi) \sqcup \image(\psi)$.  Let $\beta \in \WWW_{\ddd,m}$. If every entry of $\beta$ is satisfied, we claim that $\beta = \varphi(T)$ for some  tableau $T \in \TTT_{\ddd,m}$. Indeed, the first row of $T$ is determined by $\beta$ and the second row of $T$ is determined by the positions of the dots after performing Step (1). For example, if $\ddd = (3,2,3,3,2,2)$ one has 
    \[ \beta =  (2,1,0,0,1,0) \leadsto (2,1,0,0,1,\dot{0}) \leadsto(2,1,\dot{0},0,1,\dot{0}) \leadsto (2,\dot{1},\ddot{0},0,1,\dot{0}) \leadsto 
    \begin{footnotesize}
    \begin{young} 
    1 & 1 & 2 & 5 \\ 2 & 3 & 3 & 6\end{young}
    \end{footnotesize} = T.\]
    Similarly, if $\beta = \varphi(T)$ for some $T \in \TTT_{\ddd,m}$, it is not hard to see that every entry in $\beta$ is satisfied. Therefore, we have
    \[ \image(\varphi) = \{ \beta \in \WWW_{\ddd,m} \,:\, \text{every entry in $\beta$ is satisfied} \}.\]
    On the other hand, suppose $\beta \in \WWW_{\ddd,m}$ has at least one unsatisfied entry. Define a composition $\bar{\beta} \in \WWW_{\ddd,m}$ by decrementing the leftmost unsatisfied entry $\beta_i$ in $\beta = (\beta_1,\dots,\beta_i,\dots,\beta_n)$, so that $\bar{\beta} = (\beta_1,\dots,\beta_i-1,\dots\beta_n)$. Observe that the entries $\beta_{i+1},\beta_{i+2},\dots,\beta_n$ are saturated in $\bar{\beta}$ since $\beta_i$ is unsatisfied in $\beta$. If $\beta_i - 1$ were saturated in $\bar{\beta}$, then some element to the right of $\beta_i$ would be unsatisfied in $\beta$. It follows that $\beta_i - 1$ is the rightmost unsaturated element of $\bar{\beta}$, so that $\psi(\bar{\beta}) = \beta$. Thus, we have
    \[ \image(\psi) = \{ \beta \in \WWW_{\ddd.m} \,:\, \text{at least one entry in $\beta$ is unsatisfied} \}.\]
    This gives the required disjoint union decomposition $\WWW_{\ddd,m} = \image(\varphi) \sqcup \image(\psi)$ and completes the proof that (1) = (2).
\end{proof}

\subsection{Inverse system and standard monomials} The inverse system $I_\ddd^\perp \subseteq \FF[\xxx_n]$ is a graded subspace with the same Hilbert series as $\FF[\xxx_n]/I_\ddd$. We give a combinatorial spanning set of this inverse system. 

If $T$ is a two-row rectangular SSYT with entries $\leq n$, define a polynomial $f_T \in \FF[\xxx_n]$ by 
\begin{equation}
    f_T := \prod_{a \, < \, b \text{ is a column of } T} (x_a - x_b).
\end{equation}
For example, if $T$ is the tableau
\begin{footnotesize}
\begin{equation*}
    \begin{young}
        1 & 1 & 2 & 3 \\
        2 & 3 & 3 & 4
    \end{young}
\end{equation*}
\end{footnotesize}
then $f_T = (x_1 - x_2)(x_1  - x_3)(x_2 - x_3)(x_3 - x_4).$ The polynomials arising from the set $\TTT_\ddd$ of tableaux in Lemma~\ref{lem:dimension-tableau} span the inverse system $I_\ddd^\perp$.

\begin{proposition}
    \label{prop:inverse-system-spanning}
    The inverse system $I_\ddd^\perp \subseteq \FF[\xxx_n]$ is spanned over $\FF$ by $\{f_T \,:\, T \in \TTT_d\}$.
\end{proposition}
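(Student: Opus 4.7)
The strategy is to first verify each $f_T$ lies in $I_\ddd^\perp$, and then to exhibit enough linearly independent $f_T$'s by a leading-term argument to match the known dimension of $(I_\ddd^\perp)_m$ in each degree $m$, so that one containment together with a dimension count closes the argument.

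For the containment $f_T \in I_\ddd^\perp$, I would check that every generator of $I_\ddd$ annihilates $f_T$ under $\odot$. For the linear generator $x_1 + \cdots + x_n$, observe that $f_T$ is a product of linear forms $x_a - x_b$, each of which is killed by the derivation $D := \partial/\partial x_1 + \cdots + \partial/\partial x_n$; the Leibniz rule then gives $(x_1 + \cdots + x_n) \odot f_T = D f_T = 0$. For the generator $x_i^{d_i + 1}$, the variable $x_i$ appears in $f_T$ to a power equal to the number of columns of $T$ containing $i$, which is bounded by the total multiplicity of $i$ in $T$; since $T \in \TTT_\ddd$, this multiplicity is at most $d_i$, so $(\partial/\partial x_i)^{d_i + 1} f_T = 0$.

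For the spanning assertion, I would work in a fixed degree $m$ using the lex term order $\prec$ on $\FF[\xxx_n]$ with $x_1 > x_2 > \cdots > x_n$. In each factor $x_a - x_b$ of $f_T$ we have $a < b$ by the SSYT condition, so the $\prec$-leading term of $x_a - x_b$ is $x_a$; multiplying over the columns of $T$ yields
\[
\initial_{\prec}(f_T) \;=\; x_{a_1} x_{a_2} \cdots x_{a_m} \;=\; x_1^{\beta_1} x_2^{\beta_2} \cdots x_n^{\beta_n},
\]
where $(a_1 \leq \cdots \leq a_m)$ is the top row of $T$ and $\beta = \varphi(T) \in \WWW_{\ddd,m}$ is the composition recording the multiplicities in this row. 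For each $\beta \in \image(\varphi)$ I would pick any preimage $T_\beta \in \varphi^{-1}(\beta)$; the polynomials $\{f_{T_\beta}\}_{\beta \in \image(\varphi)}$ then have pairwise distinct leading monomials, hence are linearly independent in $\FF[\xxx_n]$, and in particular in $I_\ddd^\perp$. Combining \eqref{eq:qoutient-harmonic-hilbert}, \eqref{eq:one-row-hilbert}, and \eqref{eq:phi-image-goal}, one obtains $\#\image(\varphi) = \dim_\FF (I_\ddd^\perp)_m$, which together with the first containment forces $\spn_\FF\{f_T : T \in \TTT_{\ddd,m}\} = (I_\ddd^\perp)_m$.

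The main subtlety is that distinct tableaux in $\TTT_{\ddd,m}$ may share the same first row and therefore yield $f_T$'s with the same leading monomial, so the full set $\{f_T : T \in \TTT_{\ddd,m}\}$ is generally not linearly independent and the dimension count must be driven by $\image(\varphi)$ rather than $\#\TTT_{\ddd,m}$. Once this is anticipated, the rest of the argument is a routine leading-term calculation matched against the combinatorial dimension formula supplied by Lemma~\ref{lem:dimension-tableau}.
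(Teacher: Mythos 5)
Your proposal is correct and follows essentially the same route as the paper: verify $f_T \in I_\ddd^\perp$ by checking the generators (degree bound in $x_i$ for $x_i^{d_i+1}$, Leibniz rule for the linear form), identify $\initial_\prec(f_T)$ as the monomial recording the first row of $T$, and close the argument with the dimension count $\dim_\FF I_\ddd^\perp = \dim_\FF R_\ddd$ together with the equality (1) = (2) of Lemma~\ref{lem:dimension-tableau}. Your explicit remark that the count is driven by $\image(\varphi)$ rather than $\#\TTT_{\ddd,m}$ is exactly the content of that lemma, so no genuinely different ingredient is involved.
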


\begin{proof}
    Let $T \in \TTT_d$. We establish the membership $f_T \in I_\ddd^\perp$ by showing that $f_T$ is annihilated under the $\odot$-action by each generator of $I_\ddd$. Since $T$ has $\leq i$ copies of $d_i$, the $x_i$-degree of $f_T$ is $\leq d_i$ and we have $x_i^{d_i+1} \odot f_T = 0$. If $T$ has columns $(a_1 < b_1), \dots, (a_m < b_m)$, the Leibniz Rule implies 
    \begin{align*}
        (x_1 + \cdots + x_n) \odot f_T 
        &= (x_1 + \cdots + x_n) \odot \prod_{i=1}^m  \left[ (x_{a_i} - x_{b_i}) \right] \\
        &= \sum_{j=1}^m (x_1 + \cdots + x_n) \odot (x_{a_j} - x_{b_j}) \times \prod_{i \neq j} (x_{a_i} - x_{b_i})   \\ 
        &= 0.
    \end{align*}
    This establishes the containment of subspaces 
    \begin{equation}
    \label{eq:subspace-containment-one-row}
        \spn_\FF \{ f_T \,:\, T \in \TTT_d \} \subseteq I_\ddd^\perp
    \end{equation}
    of the polynomial algebra $\FF[\xxx_n]$. Write $\prec$ for the lexicographical term order. For $T \in \TTT_d$, it is not hard to see that the  $\prec$-leading term of $f_T$ is 
    \begin{equation}
        \initial_\prec (f_T) = x_1^{c_1} \cdots x_n^{c_n}
    \end{equation}
    where $c_i$ is the number of $i$'s in the first row of $T$. Since $\dim_\FF I_\ddd^\perp = \dim_\FF R_\ddd$, the equality (1) = (2) of Lemma~\ref{lem:dimension-tableau} forces the containment \eqref{eq:subspace-containment-one-row} to be an equality.
\end{proof}

For example, if $n = 3$ and $\ddd = (1,2,1)$, the set $\TTT_d$ consists of the tableaux
\begin{footnotesize}
\[ \varnothing, \quad
\begin{young}
    1 \\ 2
\end{young} \, \, , \quad 
\begin{young}
    1 \\ 3 
\end{young} \, \, , \quad 
\begin{young}
    2 \\ 3 
\end{young} \, \, , \quad 
\begin{young}
    1 & 2 \\ 2 & 3
\end{young}
\]
\end{footnotesize}
where $\varnothing$ is the empty tableau. It follows that $I_\ddd^\perp$ is spanned by \[\{ 1, \quad x_1-x_2, \quad x_1-x_3, \quad x_2 - x_3, \quad (x_1-x_2)(x_2-x_3) \}.\]
Observe that this spanning set is not linearly independent.

We will need a general relationship between the inverse system of a graded ideal in a polynomial ring and the standard monomial basis of the corresponding quotient. If $\prec$ is a term order  and if $f \in \FF[\xxx_n]$ is nonzero, let 
\begin{equation}
    \fin_\prec(f) := \text{the $\prec$-{\bf smallest} monomial appearing in $f$ with nonzero coefficient.}
\end{equation}
The following result is known to experts. Since the authors were unable to find a reference, we include a proof.

\begin{lemma}
    \label{lem:inverse-harmonic}
    Fix an arbitrary term order $\prec$ on the monomials of $\FF[\xxx_n]$ and let $I \subseteq \FF[\xxx_n]$ be a homogeneous ideal with inverse system $I^\perp \subseteq \FF[\xxx_n]$. The standard monomial basis of $\FF[\xxx_n]/I$ is the set $\{ \fin_\prec(f) \,:\, f \in I^\perp, \, f \neq 0 \}$.
\end{lemma}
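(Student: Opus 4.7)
The plan is to prove the statement degree by degree, reducing it to a clean pairing argument on each graded piece $\FF[\xxx_n]_d$. Since $I$ is homogeneous, the initial ideal $\initial_\prec(I)$ is generated by leading terms of homogeneous elements, and $I^\perp$ is a graded subspace. For any monomial $m$ of degree $d$, one has $m \in \initial_\prec(I)$ iff $m = \initial_\prec(f)$ for some nonzero $f \in I_d$, and $m = \fin_\prec(g)$ for some nonzero $g \in I^\perp$ iff $m = \fin_\prec(g_d)$ for some nonzero $g_d \in (I^\perp)_d$. So the lemma reduces to showing, in each degree $d$, that the initial monomials of $I_d$ and the final monomials of $(I^\perp)_d$ form complementary subsets of the monomial basis of $\FF[\xxx_n]_d$.

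The key bridge is the perfect bilinear form $\langle f, g \rangle := (\text{constant term of } f \odot g)$ on $\FF[\xxx_n]_d$, under which distinct monomials are orthogonal and each monomial $x_1^{a_1} \cdots x_n^{a_n}$ has nonzero self-pairing $a_1! \cdots a_n!$ (here characteristic zero is used). I would verify that $(I^\perp)_d$ coincides with the orthogonal complement $(I_d)^\perp$ under this pairing. One direction is immediate. For the other, suppose $g \in \FF[\xxx_n]_d$ is orthogonal to $I_d$; to show $g \in I^\perp$, it suffices to prove $f \odot g = 0$ for every homogeneous $f \in I_e$ with $e \leq d$. The case $e = d$ is the orthogonality hypothesis. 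For $e < d$, one uses that $I$ is an ideal: $m f \in I_d$ for any monomial $m$ of degree $d - e$, and the identity $(mf) \odot g = m \odot (f \odot g)$ recovers (up to a nonzero factor) the coefficient of $m$ in $f \odot g$, forcing all such coefficients to vanish.

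It then remains to prove the purely linear-algebraic fact: if $V$ is a finite-dimensional $\FF$-vector space equipped with a nondegenerate bilinear form and an ordered orthogonal basis $m_1 \prec \cdots \prec m_N$, and $W \subseteq V$ is any subspace, then $\{\initial_\prec(f) : 0 \neq f \in W\}$ and $\{\fin_\prec(g) : 0 \neq g \in W^\perp\}$ partition $\{m_1,\dots,m_N\}$. Setting $V_i := \spn\{m_1,\dots,m_i\}$, orthogonality of the basis gives $V_i^\perp = \spn\{m_{i+1},\dots,m_N\}$, and a short count shows $m_i \in \initial_\prec(W)$ iff $\dim(W \cap V_i) > \dim(W \cap V_{i-1})$, while $m_i \in \fin_\prec(W^\perp)$ iff $\dim(W^\perp \cap V_{i-1}^\perp) > \dim(W^\perp \cap V_i^\perp)$. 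The dimension identity
\[
\dim(W^\perp \cap V_{i-1}^\perp) - \dim(W^\perp \cap V_i^\perp) \;=\; 1 - \bigl(\dim(W \cap V_i) - \dim(W \cap V_{i-1})\bigr),
\]
whose two sides both lie in $\{0,1\}$, shows that exactly one of these two conditions holds at each $i$, giving the required partition. Applying this to $V = \FF[\xxx_n]_d$, $W = I_d$, and assembling over all $d$ completes the proof.

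The step I expect to require the most care is the identification $(I^\perp)_d = (I_d)^\perp$ in the second paragraph, because passing from orthogonality against the finite-dimensional space $I_d$ to the full annihilator condition defining $I^\perp$ genuinely uses the ideal structure of $I$; without it, the equality would fail. The remaining linear algebra is routine, though one must keep careful track of the flag $V_i$ and its orthogonal complement $V_i^\perp$ to extract the indicator identity above.
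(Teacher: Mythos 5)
Your proof is correct, and it takes a genuinely different route from the paper's. The paper argues by containment plus counting: it quotes the background identity $\Hilb(\FF[\xxx_n]/I;q)=\Hilb(I^\perp;q)$, uses a Gaussian elimination argument to see that the monomials $\fin_\prec(f)$, $f \in I^\perp \setminus \{0\}$, are counted by $\Hilb(I^\perp;q)$, and then proves only the containment of $\{\fin_\prec(f)\}$ in the standard monomial basis, via the observation that if some $g \in I\setminus\{0\}$ had $\initial_\prec(g)=\fin_\prec(f)$ then $g \odot f$ would equal a nonzero multiple of $\initial_\prec(g)\odot\fin_\prec(f)\neq 0$, contradicting $f \in I^\perp$; the Hilbert series equality then forces the containment to be an equality. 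You instead prove both containments at once and in a self-contained way: your identification $(I^\perp)_d=(I_d)^\perp$ (where the ideal property enters through $(mf)\odot g = m\odot(f\odot g)$, and characteristic zero through the nonvanishing self-pairings $a_1!\cdots a_n!$) is in effect a proof of the background Hilbert series fact the paper merely cites, and your flag/dimension identity replaces both the Gaussian-elimination count and the pairing contradiction, yielding directly that $\{\initial_\prec(f): 0\neq f\in I_d\}$ and $\{\fin_\prec(g): 0\neq g\in (I^\perp)_d\}$ partition the degree-$d$ monomials. What the paper's route buys is brevity given the quoted identity; what yours buys is self-containedness and a clean general linear-algebra lemma (valid for any subspace and its orthogonal complement with respect to a form having an orthogonal monomial basis) from which the statement falls out degree by degree.
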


\begin{proof}
    Let $\BBB$ be the $\prec$-standard monomial basis of $\FF[\xxx_n]/I$. We certainly have
    \begin{equation}
        \label{eq:inverse-harmonic-one}
        \sum_{m \in \BBB} q^{\deg(m)} = \Hilb(\FF[\xxx_n]/I;q).
    \end{equation}
    Write $\CCC := \{ \fin_\prec(f) \,:\, f \in I^\perp, \, f \neq 0 \}$. A Gaussian elimination argument shows
    \begin{equation}
        \label{eq:inverse-harmonic-two}
        \sum_{m \in \CCC} q^{\deg(m)} = \Hilb(I^\perp;q).
    \end{equation}If $f \in I^\perp - \{0\}$ and $\fin_\prec(f) \notin \BBB$, there exists $g \in I - \{0\}$ such that $\initial_\prec(g) = \fin_\prec(f)$. We therefore have
    \begin{equation}
        g \odot f \doteq \initial_\prec(g) \odot \fin_\prec(f) \neq 0 
    \end{equation}
    where $\doteq$ means equality up to a nonzero scalar. This contradicts the assumption $f \in I^\perp$ and we deduce the containment
    \begin{equation}
    \label{eq:c-contained-in-b}
        \CCC \subseteq \BBB.
    \end{equation}
    Since $\Hilb(\FF[\xxx_n]/I;q) = \Hilb(I^\perp;q)$, Equations~\eqref{eq:inverse-harmonic-one} and \eqref{eq:inverse-harmonic-two} imply that the containment \eqref{eq:c-contained-in-b} is an equality.
\end{proof}

We are in a position to describe the lexicographical standard monomial basis of $R_\ddd$.

\begin{proposition}
    \label{prop:one-row-standard-basis}
    Let $\ddd = (d_1,\dots,d_n) \in (\ZZ_{\geq 0})^n$ and consider the lexicographical term order $\prec$ on the monomials of $\FF[\xxx_n]$. The standard monomial basis of $R_\ddd$ is 
    \[ \{ \text{monomials } x_1^{e_1} \cdots x_n^{e_n} \,:\, \text{there exists $T \in \TTT_\ddd$ with $e_i$ copies of $i$ in its second row \}.}\]
\end{proposition}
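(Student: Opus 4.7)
The plan is to combine three ingredients: the spanning description of $I_\ddd^\perp$ from Proposition~\ref{prop:inverse-system-spanning}, the lex-harmonic characterization of the standard monomial basis from Lemma~\ref{lem:inverse-harmonic}, and the equality (1) $=$ (3) from Lemma~\ref{lem:dimension-tableau}. Write $\BBB$ for the standard monomial basis of $R_\ddd$ with respect to $\prec$ and set
\[
\SSS := \left\{ x_1^{e_1} \cdots x_n^{e_n} \,:\, \exists\, T \in \TTT_\ddd \text{ with $e_i$ copies of $i$ in its second row} \right\}.
\]
My goal is to prove both $\SSS \subseteq \BBB$ and $|\SSS| \geq |\BBB|$, which together force $\SSS = \BBB$.

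The first step is to compute $\fin_\prec(f_T)$ explicitly for each $T \in \TTT_\ddd$. Suppose $T$ has columns $(a_1 < b_1), \ldots, (a_m < b_m)$. Expanding $f_T = \prod_{i=1}^m (x_{a_i} - x_{b_i})$, every monomial of $f_T$ has the shape $\pm \prod_{i=1}^m x_{c_i}$ with $c_i \in \{a_i, b_i\}$ chosen independently in each factor. Under lex with $x_1 \succ \cdots \succ x_n$, replacing $x_{a_i}$ by $x_{b_i}$ in a single factor strictly decreases the exponent vector lexicographically, since $a_i < b_i$ means the $a_i$-th coordinate is more significant than the $b_i$-th. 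Thus the $\prec$-smallest monomial in $f_T$ is obtained by choosing $x_{b_i}$ in every factor, giving
\[
\fin_\prec(f_T) \;\doteq\; \prod_{i=1}^m x_{b_i} \;=\; x_1^{e_1} \cdots x_n^{e_n},
\]
where $e_i$ is the number of $i$'s in the second row of $T$ and $\doteq$ denotes equality up to a nonzero scalar. Consequently $\{\fin_\prec(f_T) : T \in \TTT_\ddd\} = \SSS$ as sets of monomials.

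Since each $f_T$ lies in $I_\ddd^\perp$ by Proposition~\ref{prop:inverse-system-spanning}, Lemma~\ref{lem:inverse-harmonic} immediately gives $\SSS \subseteq \BBB$. For the reverse inequality of cardinalities, $|\SSS|$ equals the number of distinct second rows appearing among tableaux in $\TTT_\ddd$, which by the equality (1) $=$ (3) of Lemma~\ref{lem:dimension-tableau} is exactly $\dim_\FF R_\ddd = |\BBB|$. Therefore $\SSS = \BBB$, as desired. I do not expect a genuine obstacle here: the argument is essentially a bookkeeping assembly of results already in place, with the only nontrivial verification being the explicit identification of $\fin_\prec(f_T)$ with the second-row monomial of $T$ under lex order, together with the cardinality match supplied by Lemma~\ref{lem:dimension-tableau}.
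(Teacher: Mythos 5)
Your proof is correct and takes essentially the same route as the paper: identify $\fin_\prec(f_T)$ with the second-row monomial of $T$ and combine Proposition~\ref{prop:inverse-system-spanning} with Lemma~\ref{lem:inverse-harmonic}. The only differences are cosmetic: you make explicit the cardinality step via the equality (1)$=$(3) of Lemma~\ref{lem:dimension-tableau}, which the paper leaves implicit, and (like the paper's ``it is not hard to see'') you omit the easy check that the monomial $\prod_i x_{b_i}$ arises from a unique choice of factors and hence does not cancel in the expansion of $f_T$.
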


Continuing our example of $n = 3$ and $\ddd = (1,2,1)$, we conclude that $\{1,x_2,x_3,x_2x_3\}$ is the standard monomial basis of $R_\ddd$.  Observe that distinct tableaux $T \in \TTT_d$ can give rise to the same monomial.

\begin{proof}
    If $T \in \TTT_\ddd$ has $e_i$ copies of $i$ in its second row, it is not hard to see that $\fin_\prec(f_T) = x_1^{e_1} \cdots x_n^{e_n}$. The result follows from Proposition~\ref{prop:inverse-system-spanning} and Lemma~\ref{lem:inverse-harmonic}.
\end{proof}

A tableau  $T \in \TTT_\ddd$ has $\leq d_i$ copies of $i$. From this it is easily seen that if $x_1^{e_1} \cdots x_n^{e_n}$ arises in the standard monomial basis of Proposition~\ref{prop:one-row-standard-basis}, one has
\[ 2 e_1 + 2 e_2 + \cdots + 2 e_{i-1} + e_i \leq d_1 + d_2 + \cdots + d_{i-1} \text{ for all $1 \leq i \leq n$}.
\]The following corollary of Proposition~\ref{prop:one-row-standard-basis} is the only result from this section which will be used later.

\begin{corollary}
     \label{cor:deduced-polynomials}
    Let $\ddd = (d_1,\dots,d_n) \in (\ZZ_{\geq 0})^n$ and let $m = x_1^{e_1} \cdots x_n^{e_n}$ be a monomial in $\FF[\xxx_n]$. If 
    \[ 2e_1 + 2e_2 + \cdots + 2e_{i-1} + e_i > d_1 + d_2 + \cdots + d_{i-1} \]
    for some $1 \leq i \leq n$, there exists a nonzero polynomial $f \in I_\ddd$ such that $\initial_\prec(f) = m$ where $\prec$ is the lexicographical term order.
\end{corollary}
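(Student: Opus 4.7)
The plan is to deduce this from Proposition~\ref{prop:one-row-standard-basis}, which characterizes the lexicographic standard monomial basis of $R_\ddd$ as the set of $x_1^{e_1} \cdots x_n^{e_n}$ for which some $T \in \TTT_\ddd$ has exactly $e_j$ copies of $j$ in its second row. If I can show that the hypothesis $2e_1 + \cdots + 2e_{i-1} + e_i > d_1 + \cdots + d_{i-1}$ rules out any such $T$, then $m$ is not a standard monomial, so $m \in \initial_\prec(I_\ddd)$. Since $\initial_\prec(I_\ddd)$ is a monomial ideal, this means some leading term $\initial_\prec(g)$ with $g \in I_\ddd$ divides $m$; writing $m = m' \cdot \initial_\prec(g)$ and setting $f := m' \cdot g$ produces a nonzero $f \in I_\ddd$ with $\initial_\prec(f) = m$.

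The content of the proof is therefore a simple tableau inequality. Fix $T \in \TTT_\ddd$ and let $c_j$ be the number of copies of $j$ in the top row. Two facts are available. First, the column-strict condition on the SSYT $T$ implies that every column whose bottom entry is at most $i$ has a top entry at most $i-1$; since there are $e_1 + \cdots + e_i$ such columns, the top row must contain at least that many entries from $\{1,\dots,i-1\}$, giving $c_1 + \cdots + c_{i-1} \geq e_1 + \cdots + e_i$. Second, the containment $T \in \TTT_\ddd$ says $c_j + e_j \leq d_j$ for each $j$; summing over $j < i$ yields $\sum_{j<i} c_j + \sum_{j<i} e_j \leq \sum_{j<i} d_j$. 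Substituting the first bound into the second produces exactly $2e_1 + \cdots + 2e_{i-1} + e_i \leq d_1 + \cdots + d_{i-1}$. The edge case $i=1$ is subsumed: it forces $e_1 = 0$, reflecting that a $1$ can never appear in the bottom row of a column-strict tableau.

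The contrapositive of this combinatorial claim is the corollary: if the inequality fails for some $i$, no tableau $T \in \TTT_\ddd$ matches the second-row content of $m$, and the first paragraph produces the desired $f$. I anticipate no real obstacle, since the substantive work is carried out in Proposition~\ref{prop:one-row-standard-basis} (which rests on the Lefschetz computation and the inverse-system spanning set) and the present corollary is a clean consequence. The only point worth a line of care is the upgrade from ``$m$ is not a standard monomial'' to ``some $f \in I_\ddd$ has $\initial_\prec(f) = m$'' rather than merely dividing $m$, which is handled by the monomiality of the initial ideal as noted above.
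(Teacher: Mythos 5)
Your proposal is correct and follows exactly the route the paper intends: the paper states the inequality $2e_1+\cdots+2e_{i-1}+e_i \le d_1+\cdots+d_{i-1}$ for standard monomials as an "easily seen" consequence of Proposition~\ref{prop:one-row-standard-basis} and the $\le d_j$ multiplicity bound in $\TTT_\ddd$, and your column-strictness count together with the summed bound $c_j+e_j\le d_j$ is precisely the missing verification. The final upgrade from $m\in\initial_\prec(I_\ddd)$ to $m=\initial_\prec(f)$ via divisibility in the monomial initial ideal is also the standard step the paper implicitly uses.
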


The authors do not know an explicit formula for the elements $f \in I_\ddd$ guaranteed by Corollary~\ref{cor:deduced-polynomials}. Finding such a formula could make the arguments in this paper significantly more direct. 


\section{Matrix-ball construction}
\label{sec:Matrix}

Let $\alpha,\beta \models_0 n$ be weak compositions. The RSK correspondence \cite{Knuth,Schensted} is a bijection
\[ \CCC_{\alpha,\beta} \xrightarrow{\,\, \sim \, \, } \bigsqcup_{\lambda \vdash n} \SSYT(\lambda,\alpha) \times \SSYT(\lambda,\beta).\]
This bijection was discovered, and is most commonly worked with, via an insertion algorithm. The matrix-ball construction is a `geometric' formulation of RSK which will be more useful for our purposes.

Let $A = (a_{i,j}) \in \Mat_{k \times p}(\ZZ_{\geq 0})$ and write $A^{(1)} := A$. We form a $k \times p$ grid where the cell with matrix coordinates $(i,j)$ contains $a_{i,j}$ balls running from northwest to southeast. For example, if $k = 3, p = 4$, and 
\begin{footnotesize}
\[ A^{(1)} = A = \begin{pmatrix} 1 & 2 & 0 & 1 \\ 0 & 0 & 2 & 1 \\ 3 & 0 & 1 & 1 \end{pmatrix}\]
\end{footnotesize}
is the matrix appearing in the introduction, the grid is shown below.

\begin{center}
    \begin{tikzpicture}[scale = 0.9]
            \draw[dashed] (0,1) -- (4,1);
            \draw[dashed] (0,0) -- (4,0);

            \draw[dashed] (1,-1) -- (1,2);
            \draw[dashed] (2,-1) -- (2,2);
            \draw[dashed] (3,-1) -- (3,2);

            \node at (0.5,1.5) {$\circ$};
            \node at (0.5,-0.5) {$\circ$};
            \node at (0.7,-0.7) {$\circ$};
            \node at (0.3,-0.3) {$\circ$};

            \node at (1.4,1.6) {$\circ$};
            \node at (1.6,1.4) {$\circ$};

            \node at (3.5,1.5) {$\circ$};

            \node at (2.4,0.6) {$\circ$};
            \node at (2.6,0.4) {$\circ$};

            \node at (2.5,-0.5) {$\circ$};
            \node at (3.5,0.5) {$\circ$};
            \node at (3.5,-0.5) {$\circ$};
    \end{tikzpicture}
\end{center}

We recursively label these balls with $1,2,\dots$ as follows.
\begin{itemize}
    \item If $b$ is a ball and there does not exist a distinct ball north or west of $b$, the label of $b$ is $1$.
    \item Otherwise, the label of $b$ is the quantity 
    \[ \max \{ \text{ label of $b'$} \,:\, b' \text{ is north or west of $b$} \, \} + 1.\]
\end{itemize}
In our example, we have the following labels.
\begin{center}
    \begin{tikzpicture}[scale = 1.6]
            \draw[dashed] (0,1) -- (4,1);
            \draw[dashed] (0,0) -- (4,0);

            \draw[dashed] (1,-1) -- (1,2);
            \draw[dashed] (2,-1) -- (2,2);
            \draw[dashed] (3,-1) -- (3,2);

            \node at (0.5,1.5) {$\begin{tiny}\circled{1}\end{tiny}$};
            \node at (0.5,-0.5) {$\begin{tiny}\circled{3}\end{tiny}$};
            \node at (0.7,-0.7) {$\begin{tiny}\circled{4}\end{tiny}$};
            \node at (0.3,-0.3) {$\begin{tiny}\circled{2}\end{tiny}$};

            \node at (1.4,1.6) {$\begin{tiny}\circled{2}\end{tiny}$};
            \node at (1.6,1.4) {$\begin{tiny}\circled{3}\end{tiny}$};

            \node at (3.5,1.5) {$\begin{tiny}\circled{4}\end{tiny}$};

            \node at (2.4,0.6) {$\begin{tiny}\circled{4}\end{tiny}$};
            \node at (2.6,0.4) {$\begin{tiny}\circled{5}\end{tiny}$};

            \node at (2.5,-0.5) {$\begin{tiny}\circled{6}\end{tiny}$};
            \node at (3.5,0.5) {$\begin{tiny}\circled{6}\end{tiny}$};
            \node at (3.5,-0.5) {$\begin{tiny}\circled{7}\end{tiny}$};
    \end{tikzpicture}
\end{center}

A ball $b$ is {\em northern} if it has smallest row index among the set of balls sharing its label. A ball $b$ is {\em western} if it has smallest column index among the set of balls sharing its label.  The following diagram is our example with northern balls labeled $N$ and western balls labeled $W$.

\begin{center}
    \begin{tikzpicture}[scale = 1.6]
            \draw[dashed] (0,1) -- (4,1);
            \draw[dashed] (0,0) -- (4,0);

            \draw[dashed] (1,-1) -- (1,2);
            \draw[dashed] (2,-1) -- (2,2);
            \draw[dashed] (3,-1) -- (3,2);

            \node at (0.5,1.5) {$\begin{tiny}\circled{1}^N_W\end{tiny}$};
            \node at (0.5,-0.5) {$\begin{tiny}\circled{3}_W\end{tiny}$};
            \node at (0.7,-0.8) {$\begin{tiny}\circled{4}_W\end{tiny}$};
            \node at (0.3,-0.2) {$\begin{tiny}\circled{2}_W\end{tiny}$};

            \node at (1.4,1.6) {$\begin{tiny}\circled{2}^N\end{tiny}$};
            \node at (1.6,1.4) {$\begin{tiny}\circled{3}^N\end{tiny}$};

            \node at (3.5,1.5) {$\begin{tiny}\circled{4}^N\end{tiny}$};

            \node at (2.35,0.6) {$\begin{tiny}\circled{4}\end{tiny}$};
            \node at (2.65,0.4) {$\begin{tiny}\circled{5}^N_W\end{tiny}$};

            \node at (2.5,-0.5) {$\begin{tiny}\circled{6}_W\end{tiny}$};
            \node at (3.5,0.5) {$\begin{tiny}\circled{6}^N\end{tiny}$};
            \node at (3.5,-0.5) {$\begin{tiny}\circled{7}^N_W\end{tiny}$};
    \end{tikzpicture}
\end{center}

Let $x_i$ be the number of northern balls in row $i$ and $y_j$ be the number of western balls in column $j$.  We have two one-row SSYT $(P_1,Q_1)$ where $P_1$ has $x_i$ copies of $i$ and $Q_1$ has $y_j$ copies of $j$. These will be the first rows of the image $A \mapsto (P,Q)$. In our example, we have 
$P_1 = \begin{footnotesize} \begin{young} 1 & 1 & 1 & 1 & 2 & 2 & 3 \end{young} \end{footnotesize}$ and $Q_1 = \begin{footnotesize} \begin{young} 1 & 1 & 1 & 1 & 3 & 3 & 4 \end{young} \end{footnotesize}$.

We use our labeled ball diagram to construct a matrix $A^{(2)}$ from $A^{(1)}$ as follows. For any fixed label $\ell$, the set of balls labeled $\ell$ lie in distinct matrix positions, and these positions have the form $\{ (i_1,j_1),\dots,(i_m,j_m)\}$ where $i_1 < \cdots < i_m$ and $j_1 > \cdots > j_m$. Let $B_\ell = (b_{i,j})) \in \Mat_{k \times p}(\ZZ_{\geq 0})$ be the $0,1$-matrix where 
\begin{equation}
    b_{i,j} = \begin{cases}
        1 & \text{if $(i,j) \in \{ (i_1,j_2), (i_2,j_3), \dots, (i_{m-1},j_m) \}$,} \\
        0 & \text{otherwise.}
    \end{cases}
\end{equation}
The matrix $A^{(2)}$ is defined by $A^{(2)} := \sum_{\ell \geq 0} B_\ell$ where the sum is matrix addition. We have componentwise inequalities
\[    \row(A^{(2)}) \leq \row(A^{(1)}) \quad \text{and} \quad \col(A^{(2)}) \leq \col(A^{(1)}). \]
In our running example, we have 
\begin{footnotesize}
\[ B_2 = B_3 =  \begin{pmatrix} 0 & 0 & 0 & 0 \\ 0 & 0 & 0 & 0 \\ 0 & 1 & 0 & 0 \end{pmatrix}, \quad B_4 = \begin{pmatrix} 0 & 0 & 0 & 0 \\ 0& 0 & 0 & 1 \\ 0 & 0  & 1 & 0 \end{pmatrix}, \quad B_6 = \begin{pmatrix} 0  & 0 & 0 & 0 \\ 0 & 0 & 0 & 0 \\ 0 & 0 & 0 & 1 \end{pmatrix},\]
\end{footnotesize}
and $B_1, B_5, B_7$ are the zero matrix. Therefore
\begin{footnotesize}
\[ A^{(2)} = B_1 + \cdots + B_7 = \begin{pmatrix} 0 & 0 & 0 & 0 \\ 0 & 0 & 0 & 1 \\ 0 & 2 & 1 & 1 \end{pmatrix}.\]
\end{footnotesize}

We repeat our ball labeling process on the matrix $A^{(2)}$. In our example, this yields the following figure.
\begin{center}
    \begin{tikzpicture}[scale = 1.4]
            \draw[dashed] (0,1) -- (4,1);
            \draw[dashed] (0,0) -- (4,0);

            \draw[dashed] (1,-1) -- (1,2);
            \draw[dashed] (2,-1) -- (2,2);
            \draw[dashed] (3,-1) -- (3,2);

            \node at (1.37,-0.37) {$\begin{tiny}\circled{1}\end{tiny}$};
            \node at (1.63,-0.63) {$\begin{tiny}\circled{2}\end{tiny}$};

            \node at (3.5,0.5) {$\begin{tiny}\circled{1}\end{tiny}$};
            \node at (2.5,-0.5) {$\begin{tiny}\circled{3}\end{tiny}$};
            \node at (3.5,-0.5) {$\begin{tiny}\circled{4}\end{tiny}$};
    \end{tikzpicture}
\end{center}
Recording northern and western balls, we obtain the second rows $P_2$ and $Q_2$ of the image $(P,Q)$ of $A$ under RSK. In our example, we have $P_2 = \begin{footnotesize}\begin{young} 2 & 3 & 3 & 3 \end{young}\end{footnotesize}$ and $Q_2= \begin{footnotesize}\begin{young} 2 & 2 & 3 &4\end{young}\end{footnotesize}.$ 
We use the ball labelings to obtain $A^{(3)}$ from $A^{(2)}$ as before. In our example, this gives
\begin{footnotesize}
\[
A^{(3)} = \begin{pmatrix} 0 & 0 & 0 & 0 \\ 0 & 0 & 0 & 0 \\ 0 & 0 & 0 & 1 \end{pmatrix}.
\]
\end{footnotesize}
We have $P_3 = \begin{footnotesize}\begin{young} 3 \end{young}\end{footnotesize}$, $Q_3 = \begin{footnotesize}\begin{young} 4 \end{young}\end{footnotesize}$, and $A^{(4)}$ is the zero matrix. We conclude that our example matrix $A$ maps to $(P,Q)$ under RSK where
\[ P = 
\begin{footnotesize}\begin{young} 1 & 1 & 1 & 1 & 2 & 2 & 3 \\ 2 & 3 & 3 & 3 \\ 3 \end{young}\end{footnotesize} \quad \text{and} \quad 
Q = \begin{footnotesize}\begin{young} 1 & 1 & 1 & 1 & 3 & 3& 4 \\ 2 & 2 & 3 & 4 \\ 4 \end{young}\end{footnotesize}.\]

In general, given a nonzero matrix $A^{(i)} \in \Mat_{k \times p}(\ZZ_{\geq 0})$ we use the ball labeling procedure to define a new matrix $A^{(i+1)} \in \Mat_{k \times p}(\ZZ_{\geq 0})$. This new matrix satisfies
\begin{equation}
    \label{eq:matrix-inequalities}
    \row(A^{(i+1)}) < \row(A^{(i)}) \quad \text{and} \quad \col(A^{(i+1)}) < \col(A^{(i)})
\end{equation}
where $<$ is the componentwise partial order on sequences of integers of the same length. We also produce the rows $P_i$ and $Q_i$ of the image $(P,Q)$ of $A$ under RSK. This procedure continues until $A^{(i+1)}$ is the zero matrix. 

\begin{theorem}
    \label{thm:matrix-ball}
    For any $A \in \Mat_{k \times p}(\ZZ_{\geq 0})$, the figures $P$ and $Q$ produced by the above construction are semistandard Young tableaux. Furthermore, the map $A \mapsto (P,Q)$ is the RSK bijection 
    \[ \CCC_{\alpha,\beta} \xrightarrow{\,\, \sim \, \, } \bigsqcup_{\lambda \vdash n} \SSYT(\lambda,\alpha) \times \SSYT(\lambda,\beta).\]
\end{theorem}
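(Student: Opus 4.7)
The plan is to prove both statements simultaneously by induction on $N := \sum_{i,j} a_{i,j}$, the total sum of entries, with the base case $N = 0$ being immediate since $A$ is the zero matrix and $P = Q = \varnothing$.

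For the inductive step I would first analyze the ball-labeling in a single iteration and relate it to the RSK output via Greene's/Schensted's theorem. Concretely, I would show that the ball labels compute the "rank in a longest zigzag": a ball in position $(i,j)$ receives label $\ell$ if and only if $\ell$ is the maximum size of a chain of distinct balls in positions forming a zigzag ending at $(i,j)$. Given this, the maximum label appearing anywhere equals $\zigzag(A)$, which by \eqref{eq:zigzag-rsk} equals $\lambda_1$, the length of the first row of $\shape(P) = \shape(Q)$. Next, I would establish that the number of northern balls in row $i$ equals the number of $i$'s in the first row of $P$, and similarly for western balls in column $j$ and the first row of $Q$. This can be done by converting $A$ to its biword and invoking Greene's theorem: inserting the biword via Schensted insertion places $i$ in the first row of $P$ precisely when a certain "longest weakly increasing prefix" statistic increases at a biletter in row $i$, which is in turn recorded by a northern ball in row $i$ of the matrix-ball diagram. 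Together with the analogous statement for $Q$, this proves $P_1, Q_1$ agree with the first rows of $\mathrm{RSK}(A)$.

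The crucial and hardest step is to identify $A^{(2)}$ as the matrix whose RSK image is obtained from $\mathrm{RSK}(A)$ by removing the first rows of $P$ and $Q$. I would argue this by tracking the Schensted bumping process on the biword: each ball labeled $\ell$ with positions $\{(i_1, j_1), \ldots, (i_m, j_m)\}$ arranged so that $i_1 < \cdots < i_m$ and $j_1 > \cdots > j_m$ represents $m$ biletters sharing a common "insertion column" in the first row; when inserted one after another, each of the last $m-1$ bumps the predecessor of its would-be slot, producing bumped biletters which, by direct inspection, are exactly $\{(i_1, j_2), (i_2, j_3), \ldots, (i_{m-1}, j_m)\}$. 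Summing over all labels $\ell$ gives precisely the matrix $A^{(2)} = \sum_\ell B_\ell$. Since the bumped biletters are the input to the RSK insertion producing the second and later rows of $(P,Q)$, the inductive hypothesis applied to $A^{(2)}$ (which is strictly smaller by \eqref{eq:matrix-inequalities}) completes the proof that $A \mapsto (P,Q)$ is RSK.

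Finally, the SSYT property of $P$ and $Q$ is essentially automatic from the construction plus the induction: rows are weakly increasing because we list the contributions by row/column index in order, and strict increase down columns follows from the strict componentwise inequalities $\row(A^{(i+1)}) < \row(A^{(i)})$ and $\col(A^{(i+1)}) < \col(A^{(i)})$, which ensure that any index $i$ appearing in row $s$ of $P$ cannot also appear in row $s+1$. The main obstacle is the middle step, namely the combinatorial identification of $A^{(2)}$ with the "residual biword" obtained from a complete first-row insertion pass: this requires a careful analysis of which ball in each label-class is "bumped into" by the next, and keeping track of the northern/western tie-breaking conventions under Schensted's row-insertion rule so that the correspondence between ball positions and bumped biletters is exact.
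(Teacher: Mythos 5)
Your overall strategy --- induction on the total entry sum, Greene/Schensted to identify a ball's label with the first-row cell its letter lands in, identification of $A^{(2)}$ with the biword of letters bumped out of row one, and then induction --- is a legitimate self-contained proof and is essentially the standard textbook argument. Note that the paper itself does not prove Theorem~\ref{thm:matrix-ball}: it cites Viennot's shadow-line construction, standardization, and Fulton's treatment \cite{Fulton}. So your route is genuinely different in that it supplies a proof where the paper supplies references, and your first two steps (labels as longest-zigzag ranks; northern/western balls giving the first rows, modulo keeping straight that in the paper's normalization $P$ records row indices and $Q$ records column indices) are sound.

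The crucial middle step, however, is wrong as stated. Track the occupants of cell $(1,\ell)$ of the insertion tableau: they are exactly the letters of the balls labeled $\ell$, arriving in the order $(i_1,j_1),(i_2,j_2),\dots,(i_m,j_m)$ with $i_1<\cdots<i_m$, $j_1>\cdots>j_m$. When the $(t{+}1)$-st of these arrives it bumps the letter of the $t$-th, and the bumped letter is paired with the \emph{current} recording letter; so the derived biletters occupy the matrix positions $\{(i_2,j_1),(i_3,j_2),\dots,(i_m,j_{m-1})\}$ (the ``outer'' corners), not $\{(i_1,j_2),\dots,(i_{m-1},j_m)\}$ as you claim. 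Your set coincides with the displayed formula for $B_\ell$ in the paper, but that formula is inconsistent with the paper's own worked example (there $B_2$ is supported at $(3,2)=(i_2,j_1)$, not at $(1,1)=(i_1,j_2)$), and with the inner-corner rule the theorem is actually false: for the permutation matrix of $w=21$, with balls at $(1,2)$ and $(2,1)$, the inner-corner rule puts the derived ball at $(1,1)$ and yields $P=Q$ with two $1$'s in one column, while the correct outer corner $(2,2)$ yields second rows $[2]$, matching RSK. So the ``direct inspection'' must be redone with the outer corners; with that correction the inductive identification of $A^{(2)}$ with the residual biword, and hence the theorem, goes through. A smaller flaw: your argument for column-strictness is not valid --- it is false that a letter occurring in row $s$ of $P$ cannot occur in row $s{+}1$ (in the paper's running example the letter $3$ occurs in all three rows of $P$), and the componentwise inequalities on $\row(A^{(i)})$, $\col(A^{(i)})$ do not by themselves give semistandardness. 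This is harmless, since once the identification with RSK insertion is established the output is semistandard automatically; just drop the ad hoc argument and cite that fact instead.
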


Theorem~\ref{thm:matrix-ball} is a foundational result whose history is somewhat difficult to trace. When $A$ is a permutation matrix, Theorem~\ref{thm:matrix-ball} reduces to Viennot's shadow line construction \cite{Viennot}. The general case of $\ZZ_{\geq 0}$-matrices may be deduced from that of permutation matrices using `standardization' techniques; see e.g. \cite[p. 321]{Stanley}. Fulton gave \cite[Sec. 4.2]{Fulton} a textbook treament of Theorem~\ref{thm:matrix-ball}.

If $A \mapsto (P,Q)$ under RSK, the tableaux $P,Q$ encode a wealth of combinatorial information about the matrix $A$. For the matrix $A$ of our running example, we computed in the introduction that $\zigzag(A) = 7$. The first rows of both $P$ and $Q$ have length 7. The following result is essentially due to Schensted.

\begin{theorem}
    \label{thm:zigzag-characterization}
    Let $A$ be a $\ZZ_{\geq 0}$-matrix, suppose $A \mapsto (P,Q)$ under RSK, and let $\lambda$ be the common shape of $P$ and $Q$. We have
    \[ \zigzag(A) = \lambda_1.\]
    In particular, the partition $\lambda$ is a single row if and only if $A$ is a zigzag matrix.
\end{theorem}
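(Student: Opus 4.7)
The plan is to analyze the first iteration of the matrix-ball construction applied to $A$ and extract both sides of the equation $\zigzag(A)=\lambda_1$ directly from the ball labels.

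First I would reinterpret the labeling rule as a longest-chain statistic. Place each of the $a_{i,j}$ balls in cell $(i,j)$ at a distinct point along the NW--SE diagonal of that cell. Then ``north or west of $b$'' becomes a well-defined strict partial order $\prec$ on the set of balls. A straightforward induction on the rule
\[\text{label}(b) = \max\{\text{label}(b') : b' \prec b\} + 1\]
shows that the label of $b$ is the length of the longest $\prec$-chain terminating at $b$. Consequently, the maximum label occurring in $A^{(1)}=A$ equals the length of a longest $\prec$-chain among all balls.

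Next I would show that the length of a longest $\prec$-chain equals $\zigzag(A)$. In one direction, any chain $b_1 \prec b_2 \prec \cdots \prec b_\ell$ projects to a sequence of matrix positions $(r_1,c_1),(r_2,c_2),\ldots,(r_\ell,c_\ell)$ which is weakly increasing in both coordinates; letting $Z$ be the set of distinct matrix positions appearing, the number of $b_k$ in any position $(i,j)\in Z$ is at most $a_{i,j}$ (because only $a_{i,j}$ balls live there), giving $\ell \leq \sum_{(i,j)\in Z} a_{i,j} \leq \zigzag(A)$. Conversely, given any zigzag $Z = \{(i_1,j_1),\ldots,(i_q,j_q)\}$, concatenating all $a_{i_k,j_k}$ balls in cell $(i_k,j_k)$ in NW-to-SE order and then moving through $Z$ from upper-left to lower-right produces a $\prec$-chain of length $\sum_{(i,j)\in Z} a_{i,j}$. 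Taking the maximum over $Z$ yields longest chain length $= \zigzag(A)$.

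Finally I would match this with $\lambda_1$. By construction, for each label $\ell$ used in the first iteration there is exactly one northern ball (the one of smallest row index carrying that label), so the total number of northern balls equals the maximum label. The first row $P_1$ of the output tableau $P$ has length equal to the total number of northern balls (it consists of $x_i$ copies of $i$ summed over rows $i$), hence $\lambda_1 = \sum_i x_i = \max\ \text{label} = \zigzag(A)$.  The ``in particular'' statement follows: $\lambda$ is a single row iff $\lambda_1 = n$, which happens iff a single zigzag $Z$ achieves $\sum_{(i,j)\in Z} a_{i,j} = n$, i.e.\ iff the support of $A$ is contained in a zigzag.

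The main obstacle is the first step: making rigorous that the within-cell NW-to-SE ball arrangement really does compute the longest weakly increasing chain in the matrix sense, rather than some stricter variant. This is handled cleanly by the point-perturbation model above, which converts ``weak increase with multiplicity at most $a_{i,j}$'' into ``strict increase with respect to $\prec$.'' Once that bookkeeping is in place, the remaining steps are formal.
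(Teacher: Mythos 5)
Your overall route is essentially the paper's — analyze the first iteration of the matrix-ball construction, interpret the labels as longest-chain statistics, and match the maximal label with both $\zigzag(A)$ and $\lambda_1$ via northern balls — but the step you yourself single out as the crux is where the write-up has a genuine gap. Placing the balls at distinct points on the within-cell diagonals and reading ``north or west of $b$'' \emph{pointwise} does not give the comparability relation that the labeling actually uses. In the construction, \emph{every} ball lying in a cell weakly northwest of $b$'s cell must count toward $b$'s label, regardless of within-cell offsets; the paper's own labeled figure shows this (the ball in cell $(1,1)$ is drawn slightly south of the first ball of cell $(1,2)$, yet that ball is labeled $2$). Under the literal pointwise order ``weakly north and weakly west,'' balls in distinct cells of the same row or column can be incomparable: for $A=\begin{pmatrix}1 & 2\end{pmatrix}$ with the natural centered placement, the lone ball of $(1,1)$ and the NW ball of $(1,2)$ are incomparable, so your recursion returns maximal label $2$ while $\zigzag(A)=\lambda_1=3$; for the same reason the concatenation of balls along a zigzag in your second step need not be a $\prec$-chain, and ``exactly one northern ball per label'' can fail. (The literal disjunctive reading ``north \emph{or} west'' is worse still: NE/SW pairs give $2$-cycles, so it is not a partial order at all.) So the claim that the perturbation model ``handles this cleanly'' is exactly what is missing.

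The repair is short and leaves the rest of your argument intact: define $\prec$ combinatorially by $b'\prec b$ iff the cell of $b'$ is componentwise $\leq$ the cell of $b$ and either the cells differ or $b'$ precedes $b$ in the within-cell diagonal order (equivalently, choose the diagonal placements so that all balls of a cell $c$ lie weakly NW of all balls of $c'$ whenever $c\le c'$ componentwise and $c \neq c'$, e.g.\ by spacing the within-cell offsets according to the antidiagonal $i+j$, and then verify this property). With that $\prec$, your three steps — label of $b$ equals the longest $\prec$-chain ending at $b$; longest chain $=\zigzag(A)$ via the two projections; and $\lambda_1=\#\{\text{northern balls}\}=\max$ label — are all correct and prove the theorem, including the ``in particular'' statement. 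For comparison, the paper gets the bound $\zigzag(A)\ge\lambda_1$ by explicitly backtracking from a ball of maximal label to build a zigzag whose cell values sum to exactly $\lambda_1$ (a construction it reuses in the proof of the spanning lemma), whereas you get it from the generic zigzag-to-chain/chain-to-zigzag correspondence; apart from that, the two arguments are essentially the same.
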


When $\beta = (1^p)$, we may view the matrix $A$ as a length $p$ word $w_1 \dots w_p$ over $\ZZ_{\geq 0}$ and Theorem~\ref{thm:zigzag-characterization} is a famous result of Schensted \cite{Schensted}. While Theorem~\ref{thm:zigzag-characterization} is widely known and may be deduced from Schensted's result using standardization, we give a proof whose underlying idea will be useful in proving a key spanning result (Lemma~\ref{lem:spanning-lemma}) later on.

\begin{proof}
    The length $\lambda_1$ of the first row of $\lambda$ equals the number of distinct ball labels when performing the first iteration of the matrix-ball construction on $A$. If $Z$ is a zigzag of matrix positions, the balls in $Z$ must have distinct labels so that $\zigzag(A) \leq \lambda_1$. 

    We construct a zigzag $Z$ such that $\sum_{(i,j) \in Z} a_{i,j} = \lambda_1$ as follows. Let $(i_1,j_1)$ be the position of any ball labeled $\lambda_1$ and let $\ell_1 \leq \lambda_1$ be the smallest label of a ball in position $(i_1,j_1)$. If $\ell_1 > 1$, there exists a ball labeled $\ell_1 - 1$ in a position $(i_{2},j_{2})$ with $i_{2} \leq i_1$ and $j_{2} \leq j_1$. Observe that $\{ (i_{1}, j_{1}), (i_2,j_2) \}$ is a zigzag. Let $\ell_{2}$ be the smallest label of a ball in position $(i_{2},j_{2})$. If $\ell_{2} > 1$, there exists a ball labeled $\ell_{2} - 1$ in a position $(i_{3},j_{3})$ with $i_{3} \leq i_{2}$ and $j_{3} \leq j_{2}$. Then $\{ (i_{1},j_{1}), (i_{2},j_{2}), (i_3,j_3)\}$ is a zigzag. Continuing in this fashion, we obtain a zigzag $Z = \{ (i_1,j_1), \dots, (i_m,j_m) \}$ such that $a_{(i_1,j_1)} + \cdots + a_{(i_m,j_m)} = \lambda_1$, as required.
\end{proof}

Our running example may help clarify the proof of Theorem~\ref{thm:zigzag-characterization} given above. In this example, a possible zigzag $Z$ as in the above proof is shown in red as follows. 
\begin{center}
    \begin{tikzpicture}[scale = 1.6]
            \draw[dashed] (0,1) -- (4,1);
            \draw[dashed] (0,0) -- (4,0);

            \draw[dashed] (1,-1) -- (1,2);
            \draw[dashed] (2,-1) -- (2,2);
            \draw[dashed] (3,-1) -- (3,2);

            \node at (0.5,1.5) {${\color{red} \begin{tiny}\circled{1}\end{tiny}}$};
            \node at (0.5,-0.5) {$\begin{tiny}\circled{3}\end{tiny}$};
            \node at (0.7,-0.7) {$\begin{tiny}\circled{4}\end{tiny}$};
            \node at (0.3,-0.3) {$\begin{tiny}\circled{2}\end{tiny}$};

            \node at (1.4,1.6) {${\color{red} \begin{tiny}\circled{2}\end{tiny}}$};
            \node at (1.6,1.4) {${\color{red} \begin{tiny}\circled{3}\end{tiny}}$};

            \node at (3.5,1.5) {$\begin{tiny}\circled{4}\end{tiny}$};

            \node at (2.4,0.6) {${\color{red} \begin{tiny}\circled{4}\end{tiny}}$};
            \node at (2.6,0.4) {${\color{red} \begin{tiny}\circled{5}\end{tiny}}$};

            \node at (2.5,-0.5) {$\begin{tiny}\circled{6}\end{tiny}$};
            \node at (3.5,0.5) {${\color{red} \begin{tiny}\circled{6}\end{tiny}}$};
            \node at (3.5,-0.5) {${\color{red} \begin{tiny}\circled{7}\end{tiny}}$};
    \end{tikzpicture}
\end{center}
One other zigzag is possible in this case, corresponding to choosing the $\begin{tiny}\circled{6}\end{tiny}$ in row 3 instead of the $\begin{tiny}\circled{6}\end{tiny}$ in row 2. These are exactly the zigzags witnessing $\zigzag(A) = 7$ shown in the introduction. The construction of $Z$ in Theorem~\ref{thm:zigzag-characterization} will play a crucial role in our analysis of the quotient ring $R_{\alpha,\beta}$.

If $A \in \CCC_{\alpha,\beta}$, we use the matrix-ball construction to attach a monomial to $A$ as follows. In the permutation matrix case, the following definition specializes to the shadow monomials $\mathfrak{s}(w)$ for $w \in \symm_n$ introduced in \cite{Rhoades}.

\begin{defn}
    \label{def:m-monomial}
    Let $A \in \CCC_{\alpha,\beta}$ be a contingency table and consider the matrix $B := A^{(2)}$ obtained from $A = A^{(1)}$ after one iteration of the matrix-ball construction. We define the {\em matrix-ball monomial} $\mb(A) \in \FF[\xxx_{k \times p}]$  by
    \[ \mb(A) := \xxx^B.\]
\end{defn}

By Theorem~\ref{thm:zigzag-characterization}, the degree of $\mb(A)$ is given by
\begin{equation}
\label{eq:m-monomial-degree}
    \deg(\mb(A)) = n - \zigzag(A) \quad \text{where } \sum_{i,j} a_{i,j} = n. 
\end{equation}
We will prove in Theorem~\ref{thm:standard-monomial-basis} that the monomials $\{\mb(A) \,:\, A \in \CCC_{\alpha,\beta}\}$ descend to an $\FF$-basis of $R_{\alpha,\beta}$. To this end, we develop a better understanding of the matrices $B$ arising in Definition~\ref{def:m-monomial}. A matrix $B = (b_{i,j}) \in \Mat_{k \times p}(\ZZ_{\geq 0})$ is an {\em $\alpha,\beta$-subtingency table} if
\[ \row(B) \leq \alpha \quad \text{and} \quad \col(B) \leq \beta\]
where the inequalities are componentwise.
We write $\SSS_{\alpha,\beta}$ for the family of $\alpha,\beta$-subtingency tables. One has the disjoint union decomposition
\[ \SSS_{\alpha,\beta} = \bigsqcup_{\substack{\alpha'\leq\alpha \\ \beta'\leq\beta}} \CCC_{\alpha',\beta'}\]
where $\leq$ is componentwise inequality. Subtingency tables  relate to the matrix-ball construction as follows.

\begin{proposition}
    \label{prop:monomial-is-subtingency}
    Let $A \in \CCC_{\alpha,\beta}$ be a contingency table and write $\mb(A) = \xxx^B$. Then $B \in \SSS_{\alpha,\beta}$ is a subtingency table.
\end{proposition}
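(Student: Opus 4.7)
The plan is to prove the row and column bounds $\row(B) \leq \alpha$ and $\col(B) \leq \beta$ directly from the structure of the auxiliary matrices $B_\ell$ used to build $B = A^{(2)} = \sum_{\ell \geq 1} B_\ell$. Fix a label $\ell$, let $(i_1,j_1),\dots,(i_m,j_m)$ be the positions of the balls labeled $\ell$ in the diagram for $A = A^{(1)}$, listed so that $i_1 < \cdots < i_m$ and $j_1 > \cdots > j_m$, and recall that $B_\ell$ has 1's exactly at the positions $(i_1,j_2),(i_2,j_3),\dots,(i_{m-1},j_m)$. Since the row indices appearing in $B_\ell$ are the distinct integers $i_1,\dots,i_{m-1}$, each row of $B_\ell$ contains at most one 1; moreover, the $i$-th row of $B_\ell$ contains a 1 if and only if row $i$ contains a ball labeled $\ell$ and this ball is not the southernmost ball with that label.

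The key combinatorial input is that, for any fixed row $i$, each label $\ell$ contributes at most one ball to row $i$. Indeed, if two cells $(i,j_1)$ and $(i,j_2)$ with $j_1 < j_2$ both contained a ball labeled $\ell$, then the ball at $(i,j_2)$ would have the ball at $(i,j_1)$ strictly to its west with the same label, contradicting the defining recursion for the labels (the label of $(i,j_2)$ would have to exceed $\ell$). Consequently the $i$-th row sum of $A$ equals the total number of balls in row $i$, which equals the number of distinct labels $\ell$ that occur in row $i$. Summing over $\ell$, the $i$-th row sum of $B$ counts those labels occurring in row $i$ for which row $i$ is not the bottommost row hosting that label; this is at most the number of labels occurring in row $i$, which is $\alpha_i$. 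Hence $\row(B) \leq \alpha$.

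The column bound is entirely analogous, with the roles of north/south swapped for west/east: each column contains at most one ball per label (otherwise a southern ball in the same column would inherit a strictly larger label from the northern one), so the column sum of $A$ in column $j$ is the number of labels occurring in column $j$. In $B_\ell$ the columns appearing are $j_2,\dots,j_m$, so column $j$ contributes a 1 to $B_\ell$ precisely when $j$ hosts a ball labeled $\ell$ that is not the westernmost such, giving the inequality $\col(B) \leq \beta$. Combining both bounds yields $B \in \SSS_{\alpha,\beta}$.

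I do not expect a serious obstacle here; the argument is essentially a careful bookkeeping of the labelled-ball diagram, and the only subtle point is the observation that labels cannot repeat in a single row or column, which drops out of the definition of the labels. This is a refinement of the qualitative assertion \eqref{eq:matrix-inequalities}, and the proof simultaneously explains that inequality.
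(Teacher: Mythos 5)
Your proof is correct and follows the same route as the paper: the paper disposes of this proposition in one line by citing the componentwise inequalities \eqref{eq:matrix-inequalities} for the matrix-ball step, i.e. $\row(A^{(2)}) \leq \row(A^{(1)})$ and $\col(A^{(2)}) \leq \col(A^{(1)})$, which is exactly what you establish. The only difference is that you actually verify these inequalities (each $B_\ell$ has at most one $1$ per row and column, omitting the southernmost/westernmost ball of each label, and no label repeats within a row or column), a bookkeeping argument the paper leaves implicit as part of the recalled matrix-ball construction.
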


\begin{proof}
    This is immediate from the inequalities \eqref{eq:matrix-inequalities}.
\end{proof}

The converse to Proposition~\ref{prop:shadow-characterization} is false: not every subtingency table in $\SSS_{\alpha,\beta}$ comes from applying the matrix-ball construction to a contingency table in $\CCC_{\alpha,\beta}$. We may use the matrix-ball construction to characterize the subtingency tables $B \in \SSS_{\alpha,\beta}$ such that $\xxx^B = \mb(A)$ for some contingency table $A \in \CCC_{\alpha,\beta}$. The following result should be compared with \cite[Lem. 3.6]{Rhoades}.

\begin{proposition}
    \label{prop:shadow-characterization}
    Let $\alpha,\beta \models_0 n$ and suppose $\ell(\alpha) = k$ and $\ell(\beta) = p$. Let $B \in \SSS_{\alpha,\beta}$ be a subtingency table. Perform the matrix-ball construction on $B$ and define sequences $x_1 \dots x_k$ and $y_1 \dots y_p$ over $\ZZ_{\geq 0}$ by 
    \[ x_i := \# \text{ of northern balls in row $i$}\]
    and
    \[ y_j := \# \text{ of western balls in column $j$}. \]
    Then $\xxx^B = \mb(A)$ for some contingency table $A \in \CCC_{\alpha,\beta}$ if and only if 
    \begin{equation}
    \label{eq:x-inequality}
    \sum_{q=1}^{i} x_q  \leq \sum_{q=1}^{i-1} (\alpha_q - \row(B)_q) \text{ for all $1 \leq i \leq k$}
    \end{equation}
    and 
    \begin{equation} 
    \label{eq:y-inequality}
    \sum_{q=1}^j y_q \leq \sum_{q=1}^{j-1} (\beta_q - \col(B)_q) \text{ for all $1 \leq j \leq p$}.\end{equation}
\end{proposition}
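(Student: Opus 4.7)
The plan is to interpret the two inequalities as precisely the column-strict increase between rows $1$ and $2$ of the RSK recording tableaux $P$ and $Q$ associated to $A \in \CCC_{\alpha,\beta}$. The key tool is the iterative compatibility of the matrix-ball construction with RSK: a single step $A \mapsto A^{(2)}$ corresponds to peeling off the first row of $P$ and the first row of $Q$, so the RSK image of $A^{(2)}$ is $(P', Q')$ where $P'$ (resp.\ $Q'$) results from deleting the first row of $P$ (resp.\ $Q$). Moreover, because a ball of $A$ is lost in passing to $A^{(2)}$ exactly when it is northern (for rows) or western (for columns), the row-content of $P_1$ is $\alpha - \row(A^{(2)})$ and the column-content of $Q_1$ is $\beta - \col(A^{(2)})$.

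For the forward direction, suppose $B = \mb(A) = A^{(2)}$ for some $A \in \CCC_{\alpha,\beta}$ and let $(P, Q)$ be the RSK image of $A$. By Theorem~\ref{thm:matrix-ball}, $P$ is an SSYT. Its first row $P_1$ has $\alpha_i - \row(B)_i$ copies of $i$, and its second row $P_2$, being the first row of the RSK image of $B$, has $x_i$ copies of $i$. Column-strict increase between $P_1$ and $P_2$ means that, for each $1 \leq i \leq k$, the number of entries $\leq i$ in $P_2$ is at most the number of entries $\leq i-1$ in $P_1$; substituting the content counts rearranges exactly to \eqref{eq:x-inequality}. The same argument applied to $Q$ yields \eqref{eq:y-inequality}.

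For the converse, assume \eqref{eq:x-inequality} and \eqref{eq:y-inequality} hold for $B$. Let $(P', Q')$ be the RSK image of $B$. Let $P_1$ (resp.\ $Q_1$) be the weakly increasing one-row word with $\alpha_i - \row(B)_i$ copies of $i$ (resp.\ $\beta_j - \col(B)_j$ copies of $j$). Stack $P_1$ atop $P'$ to form $P$, and similarly form $Q$. The identities $|P_1| = n - \sum_{i,j} b_{i,j} = |Q_1|$ together with $\shape(P') = \shape(Q')$ guarantee that $P$ and $Q$ have matching column-length vectors, while the hypothesized inequalities encode precisely the column-strict condition between the top two rows. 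Thus $P$ and $Q$ are SSYT of common shape with contents $\alpha$ and $\beta$, so inverse RSK produces some $A \in \CCC_{\alpha,\beta}$; the peeling compatibility then forces $A^{(2)} = B$, i.e., $\mb(A) = \xxx^B$.

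The main obstacle is justifying the peeling compatibility rigorously: that iterating the matrix-ball construction reproduces RSK row-by-row and, in particular, that the first row of $P$ has content exactly $\alpha - \row(A^{(2)})$. This is essentially Theorem~\ref{thm:matrix-ball} applied to $A$ and to $B$ in tandem, with the content identity requiring a careful audit of which balls of $A$ survive to $A^{(2)}$. A minor secondary check is that $\shape(P)$ constructed in the converse is genuinely a partition, i.e., $|P_1| \geq |P_2|$; this follows from the $i = k$ case of \eqref{eq:x-inequality} together with $\alpha_k - \row(B)_k \geq 0$.
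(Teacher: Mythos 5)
Your proposal is correct and is essentially the paper's own argument: both proofs glue the one-row tableau of content $\alpha - \row(B)$ (resp. $\beta - \col(B)$) on top of the RSK image $(\overline{P},\overline{Q})$ of $B$, identify the inequalities \eqref{eq:x-inequality}--\eqref{eq:y-inequality} with semistandardness and partition shape of the glued figures, and use the row-peeling compatibility of the matrix-ball construction with RSK in both directions. The "main obstacle" you flag (that one matrix-ball iteration peels off the first rows of $P$ and $Q$, with the northern/western ball counts accounting for the content of the removed row) is exactly what the paper takes as built into Theorem~\ref{thm:matrix-ball}, so no genuinely new ingredient is needed.
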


Note that the range of summation on the left-hand sides of the inequalities \eqref{eq:x-inequality} and \eqref{eq:y-inequality} includes one more index than on the right-hand sides of these inequalities.

\begin{proof}
    Apply RSK to the matrix $B$ to obtain a pair $B \mapsto (\overline{P},\overline{Q})$ of SSYT. By the matrix-ball construction, the first row of $\overline{P}$ contains $x_i$ copies of $i$ for all $1\leq i \leq k$. Similarly, the first row of $\overline{Q}$ contains $y_j$ copies of $j$ for all $1 \leq j \leq q$.

    Let $P_0$ be the one-row SSYT with $\alpha_i  -\row(B)_i$ copies of $i$ for $1 \leq i \leq k$. Similarly, let $Q_0$ be the one-row SSYT with $\beta_j - \col(B)_j$ copies of $j$ for $1 \leq j \leq p$. Let $P$ be the figure obtained by gluing the single row $P_0$ on top of $\overline{P}$ and define $Q$ analogously. 
    
    If $\sum_{i=1}^k x_i > n - \sum_{i=1}^k \row(B)_i$ (or equivalently $\sum_{j=1}^p y_j > n - \sum_{j=1}^p \col(B)_j$), the first row of $P$ and $Q$ will be strictly shorter than the second row and these figures will not have partition shape. The conditions \eqref{eq:x-inequality} and \eqref{eq:y-inequality} are satisfied if and only if both $P$ and $Q$ have partition shape and are semistandard.

    The proposition follows easily from the above observations. Suppose first that $\xxx^B = \mb(A)$ for some $A \in \CCC_{\alpha,\beta}$ and let $A \mapsto (P,Q)$ be the image of $A$ under RSK. Let $(\overline{P},\overline{Q})$ be the tableaux obtained by removing the first rows of $P$ and $Q$; we necessarily have $B \mapsto (\overline{P},\overline{Q})$. Since $P,Q$ are semistandard, the inequalities \eqref{eq:x-inequality} and \eqref{eq:y-inequality} must be satisfied. For the other direction, suppose the inequalities \eqref{eq:x-inequality} and \eqref{eq:y-inequality} are satisfied. Let $B \mapsto (\overline{P},\overline{Q})$ under RSK and define the pair $(P,Q)$ as in the above paragraphs. The inequalities \eqref{eq:x-inequality} and \eqref{eq:y-inequality} imply that $P$ and $Q$ are SSYT. If $A \mapsto (P,Q)$ under RSK we have $A \in \CCC_{\alpha,\beta}$ with $\xxx^B = \mb(A)$.
\end{proof}

An example may help in understanding Proposition~\ref{prop:shadow-characterization} and its proof. Suppose $\alpha = (2,3,2), \beta = (2,2,0,3)$, and
\begin{footnotesize}
\[ B = \begin{pmatrix} 0 & 0 & 0 & 0 \\ 0 & 0 & 0 & 1 \\ 0 & 2 & 0 & 1 \end{pmatrix}\]
\end{footnotesize}
so that $\row(B) = (0,1,3)$ and $\col(B) = (0,2,0,2)$. We compute the sequences $(x_1 ,x_2,x_3) = (0,1,2)$ and $(y_1,y_2,y_3,y_4) = (0,2,0,1)$. We have 
\[ y_1 + y_2 + y_3 + y_4 = 3 > (2-0) + (2-2) + (0-0) =  (\beta_1 - \col(B)_1) + (\beta_2 - \col(B)_2) + (\beta_3 - \col(B)_3)\]
which implies that $\xxx^B \neq \mb(A)$ for any contingency table $A \in \CCC_{\alpha,\beta}$. Indeed, applying RSK to $B$ yields the tableaux
\[ \overline{P} = 
\begin{footnotesize}
    \begin{young} 2 & 2 & 3 \\ 3 \end{young} 
\end{footnotesize}
\quad \text{and} \quad \overline{Q} = 
\begin{footnotesize}
  \begin{young} 2 & 2 & 4 \\ 4 \end{young}  
\end{footnotesize}.\]
Gluing the row $\begin{footnotesize}\begin{young} 1 & 1 & 4 \end{young}\end{footnotesize}$ of content $\beta - \col(B) = (2,2,0,3) - (0,2,0,2) = (2,0,0,1)$ to the top of $\overline{Q}$ yields
\[ \begin{footnotesize}
    \begin{young}
        1 & 1 & 4 \\ 2 & 2 & 4 \\ 4
    \end{young},
\end{footnotesize}\]
which is not a SSYT. Observe that the violation of the semistandard condition corresponds to the index $j=4$ for which $\sum_{q=1}^j y_q > \sum_{q=1}^{j-1} (\beta_q - \col(B)_q)$.

\section{Hilbert series and standard basis}
\label{sec:Hilbert}

In this section we prove (Theorem~\ref{thm:standard-monomial-basis}) that $\{ \mb(A) \,:\, A \in \CCC_{\alpha,\beta}\}$ is the standard monomial basis of $R_{\alpha,\beta}$ with respect to a diagonal term order $\prec$. The most difficult step is proving a spanning result (Lemma~\ref{lem:spanning-lemma}) which states the set $\{ \mb(A) \,:\, A \in \CCC_{\alpha,\beta}\}$ contains the $\prec$-standard monomial basis of $R_{\alpha,\beta}$. The relations $f \in I_{\alpha,\beta}$ which allow us to do this will be derived from elements of the `one-row' ideals $I_\ddd \subseteq \FF[x_1,\dots,x_n]$ in Section~\ref{sec:One-row} furnished by Corollary~\ref{cor:deduced-polynomials}. In order to transfer from a single row $\{x_1,\dots,x_n\}$ of variables to the variable matrix $\xxx_{k \times p}$, we introduce polarization operators. For technical reasons related to these polarization operators, we will need to express the defining ideal $I_{\alpha,\beta} \subseteq \FF[\xxx_{k \times p}]$ as a sum $I_{\alpha,\beta} = I^\colsum_\alpha + I^\rowsum_\beta$ of two smaller ideals.

\subsection{Polarization operators and zigzag lifting} For $1 \leq i \leq k$ and $1 \leq j \leq p$ we have the partial derivative operator $\partial/\partial x_{i,j} : \FF[\xxx_{k \times p}] \to \FF[\xxx_{k \times p}]$. We use partial derivatives to build the following polarization operators.

\begin{defn}
    \label{def:polarization-operators}
    Let $1 \leq i_0,i_1 \leq k$ be two row indices. The {\em row polarization operator} \[\rho_{i_1 \to i_0}: \FF[\xxx_{k \times p}] \longrightarrow \FF[\xxx_{k \times p}]\] is defined by the formula
    \[ \rho_{i \to i'}(f) = \sum_{j=1}^p x_{i_0,j} \cdot \frac{\partial f}{\partial x_{i_1,j}} \quad \text{for all $f \in \FF[\xxx_{k \times p}].$}\]
    Similarly, if $1 \leq j_0,j_1 \leq p$ are two column indices the {\em column polarization operator} \[ \kappa_{j_1 \to j_0}: \FF[\xxx_{k \times p}] \longrightarrow \FF[\xxx_{k \times p}]\] is defined by
    \[
    \kappa_{j_1 \to j_0}(f) := \sum_{i=1}^k x_{i,j_0} \cdot \frac{\partial f}{\partial x_{i,j_1}} \quad \text{for all $f \in \FF[\xxx_{k \times p}$].}
    \]
\end{defn}

The notation reflects the fact that $\rho_{i_1 \to i_0}$ `moves' variables from row $i_1$ to row $i_0$ and $\kappa_{j_1 \to j_0}$ `moves' variables from column $j_1$ to column $j_0$. More precisely, we have the following observation.  We use $\eee_k$ to denote the row vector with a 1 in position $k$ and zeros elsewhere. (The length of $\eee_k$ will always be clear from context.)

\begin{observation}
    \label{obs:polarization-effect-on-degree} 
    Suppose $f \in \FF[\xxx_{k \times p}]$ is row-homogeneous. If $1 \leq i_0,i_1 \leq k$ then $\rho_{i_1 \to i_0}(f)$ is row-homogeneous with
    \[ \rdeg(\rho_{i_1 \to i_0}(f)) = \rdeg(f) + \eee_{i_0} - \eee_{i_1}.\]
    Similarly, if $f$ is column-homogeneous and $1 \leq j_0,j_1\leq p$ then $\kappa_{j_1 \to j_0}$ is column-homogeneous with 
    \[ \cdeg(\kappa_{j_1 \to j_0}(f)) = \cdeg(f) + \eee_{j_0} - \eee_{j_1}.\]
\end{observation}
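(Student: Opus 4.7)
The plan is to reduce to the monomial case by $\FF$-linearity and then perform a direct unpacking of definitions. Writing $f = \sum_A c_A \xxx^A$ with the sum over $\ZZ_{\geq 0}$-matrices $A$ satisfying $\row(A) = \rdeg(f)$ (as guaranteed by row-homogeneity of $f$), both sides of the asserted identity are $\FF$-linear in $f$. It therefore suffices to verify the claim when $f = \xxx^A$ is a single monomial.

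On such a monomial, the elementary operator $x_{i_0,j} \cdot \partial/\partial x_{i_1,j}$ either annihilates $\xxx^A$ (when $a_{i_1,j} = 0$) or sends it to a scalar multiple of $\xxx^{A - E_{i_1,j} + E_{i_0,j}}$, where $E_{i,j}$ denotes the matrix with a $1$ in position $(i,j)$ and zeros elsewhere. The row vector of this new exponent matrix is $\row(A) + \eee_{i_0} - \eee_{i_1}$, and crucially this shift is independent of the column index $j$. Summing over $j$, every surviving monomial in $\rho_{i_1 \to i_0}(\xxx^A)$ has the common row degree $\rdeg(f) + \eee_{i_0} - \eee_{i_1}$, proving row-homogeneity together with the stated formula. (If the sum is identically zero, e.g.\ when $\rdeg(f)_{i_1} = 0$, the conclusion holds vacuously, treating $0$ as row-homogeneous of every row degree.)

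The second assertion follows by the symmetric argument, transposing the roles of rows and columns and replacing $\rho_{i_1 \to i_0}$ by $\kappa_{j_1 \to j_0}$. I do not anticipate any real obstacle: the observation is a mechanical consequence of the definitions of the polarization operators and of row/column degree, and is recorded here mainly for repeated reference in the sequel when the $\rho$ and $\kappa$ operators are applied to transport relations across the variable matrix $\xxx_{k \times p}$.
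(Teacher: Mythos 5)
Your verification is correct and is exactly the routine check the paper intends: the statement is recorded as an Observation with no proof given, since it follows immediately from the definition of $\rho_{i_1 \to i_0}$ and $\kappa_{j_1 \to j_0}$ by the monomial-by-monomial computation you carry out. Nothing further is needed.
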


Polarization operators similar to those in Definition~\ref{def:polarization-operators} (and `higher' and `fermionic' variants thereof) play a crucial role in studying inverse systems associated to coinvariant rings \cite{Bergeron,Haiman,KRR,KR,KRSkein,Lentfer, RW}. As in \cite[Rmk. 3.9]{KRR}, our polarization operators generate actions of Lie algebras. It can be shown that the operators 
\[ \{ \rho_{i \to i+1}, \, \rho_{i+1 \to i} \,:\, 1 \leq i \leq k-1\} \quad \text{and} \quad \{ \kappa_{j \to j+1}, \, \kappa_{j+1 \to j} \,:\, 1 \leq j \leq p-1 \} \]
generate an action of the direct sum $\littlesl_k \oplus \littlesl_p$ of Lie algebras on the ring $\FF[\xxx_{k \times p}]$ where the standard Chevalley generators map to
\[(f_i,0) \mapsto \rho_{i \to i+1}, \quad (e_i,0) \mapsto \rho_{i+1 \to i}, \quad (0,f_j) \mapsto \kappa_{j \to j+1}, \quad (0,e_j) \mapsto \kappa_{j+1 \to j}.\]  Polarization operators  preserve ideals closely related to $I_{\alpha,\beta}$.

\begin{defn}
    \label{def:row-and-column-ideals}
    Let $\alpha = (\alpha_1,\dots,\alpha_k)$ be a weak composition with $k$ parts. The {\em column sum ideal} $I^\colsum_\alpha \subseteq \FF[\xxx_{k \times p}]$ is generated by $\dots$
    \begin{itemize}
        \item all column sums $x_{1,j} + \cdots + x_{k,j}$ for $1 \leq j \leq p$, and 
        \item all monomials $x_{i,1}^{a_1} \cdots x_{i,p}^{a_p}$ in variables in the same row $i$ for which $a_1 + \cdots + a_p > \alpha_i$.
    \end{itemize}
    Similarly, if $\beta = (\beta_1,\dots,\beta_p)$ is a weak composition with $p$ parts, the {\em row sum ideal} $I^\rowsum_\beta \subseteq \FF[\xxx_{k \times p}]$ is generated by $\dots$
    \begin{itemize}
        \item all row sums $x_{i,1} + \cdots + x_{i,p}$ for $1 \leq i \leq k$, and 
        \item all monomials $x_{1,j}^{b_1} \cdots x_{k,j}^{b_k}$ in variables in the same column $j$ for which $b_1 + \cdots + b_k > \beta_j.$
    \end{itemize}
\end{defn}

The ideal $I^\colsum_\beta$ is generated by sums of columns and products of variables within rows, while $I^\rowsum_\alpha$ is the other way around. We hope the notation $(-)^\rowsum$ and $(-)^\colsum$ will minimize confusion. The relationship between the ideals in Definition~\ref{def:row-and-column-ideals} and our ideal of interest $I_{\alpha,\beta}$ is as follows.

\begin{observation}
    \label{obs:ideal-sum}
    Let $\alpha,\beta \models_0 n$ be weak compositions. We have the equality of ideals 
    \[ I_{\alpha,\beta} = I^\colsum_\alpha + I^\rowsum_\beta.\]
\end{observation}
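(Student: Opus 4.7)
The plan is to verify the equality by inspection of generators, since both sides are described as ideals generated by explicit lists. I would simply observe that the generating set of $I_{\alpha,\beta}$ listed in Definition~\ref{def:contingency-quotient} is the disjoint union of the generating sets of $I^\colsum_\alpha$ and $I^\rowsum_\beta$ given in Definition~\ref{def:row-and-column-ideals}.

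Concretely, I would sort the four families of generators of $I_{\alpha,\beta}$ by which of the two ideals they belong to: the row sums $x_{i,1} + \cdots + x_{i,p}$ and the high-degree column monomials $x_{1,j}^{b_1} \cdots x_{k,j}^{b_k}$ with $\sum b_i > \beta_j$ are among the listed generators of $I^\rowsum_\beta$, while the column sums $x_{1,j} + \cdots + x_{k,j}$ and the high-degree row monomials $x_{i,1}^{a_1} \cdots x_{i,p}^{a_p}$ with $\sum a_j > \alpha_i$ are among the listed generators of $I^\colsum_\alpha$. This shows $I_{\alpha,\beta} \subseteq I^\colsum_\alpha + I^\rowsum_\beta$, and the reverse containment is similarly immediate since every listed generator of $I^\colsum_\alpha$ and $I^\rowsum_\beta$ appears in the list of generators of $I_{\alpha,\beta}$.

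There is no serious obstacle here; the result is genuinely an observation, and the point is mainly notational: it records that the defining ideal of $R_{\alpha,\beta}$ splits naturally into a ``column-sum part'' governing the vertical structure (together with row-degree truncations) and a ``row-sum part'' governing the horizontal structure (together with column-degree truncations). This decomposition is what will make the polarization operators $\rho_{i_1 \to i_0}$ and $\kappa_{j_1 \to j_0}$ useful, since each factor is expected to be preserved by one of the two families of polarization operators.
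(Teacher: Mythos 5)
Your proposal is correct and is exactly the paper's argument: the paper disposes of this observation by noting that the generating set of $I_{\alpha,\beta}$ is precisely the union of the generating sets of $I^\colsum_\alpha$ and $I^\rowsum_\beta$, so the sum of the two ideals coincides with $I_{\alpha,\beta}$. Your sorting of the four families of generators matches this check on generators, and no further argument is needed.
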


Observation~\ref{obs:ideal-sum} comes from comparing the generating sets of the ideals on either side. Row polarization operators preserve row sum ideals while column polarization operators preserve column sum ideals.

\begin{lemma}
    \label{lem:polarization-preserves-ideals}
    The row polarization operator $\rho_{i_1 \to i_0}$ preserves the row sum ideal $I^\rowsum_\beta$. The column polarization operator $\kappa_{j_1 \to j_0}$ preserves the column sum ideal $I^\colsum_{\alpha}$.
\end{lemma}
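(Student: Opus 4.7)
The plan is to exploit the fact that both $\rho_{i_1 \to i_0}$ and $\kappa_{j_1 \to j_0}$ are derivations of $\FF[\xxx_{k \times p}]$: each summand $x_{i_0,j}\cdot(\partial/\partial x_{i_1,j})$ is a function times a derivation and hence itself satisfies the Leibniz rule, and sums of derivations are derivations. Because a derivation $D$ preserves an ideal $I$ if and only if it sends a chosen set of generators into $I$, it suffices to check the action of $\rho_{i_1 \to i_0}$ on the two families of generators of $I^\rowsum_\beta$ listed in Definition~\ref{def:row-and-column-ideals}, and analogously for $\kappa_{j_1 \to j_0}$ on $I^\colsum_\alpha$.

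For a row-sum generator $x_{i,1} + \cdots + x_{i,p}$, the operator $\rho_{i_1 \to i_0}$ gives $0$ unless $i = i_1$, in which case each partial derivative yields $1$ and the output is $x_{i_0,1} + \cdots + x_{i_0,p}$, another row sum and thus an element of $I^\rowsum_\beta$. For a column-monomial generator $m = x_{1,j}^{b_1} \cdots x_{k,j}^{b_k}$ with $b_1 + \cdots + b_k > \beta_j$, only the $j$-th summand in the definition of $\rho_{i_1 \to i_0}$ acts nontrivially, producing
\[
\rho_{i_1 \to i_0}(m) \;=\; b_{i_1}\cdot x_{i_0,j}\cdot x_{i_1,j}^{b_{i_1}-1}\prod_{i\neq i_1} x_{i,j}^{b_i}.
\]
If $b_{i_1}=0$ this is zero; otherwise it is (up to a scalar) the column-$j$ monomial whose exponent vector is $(b_1,\ldots,b_k)$ with $b_{i_1}$ decremented by $1$ and $b_{i_0}$ incremented by $1$. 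The new total column degree is still $b_1 + \cdots + b_k > \beta_j$, so this is again a generator of $I^\rowsum_\beta$ (whether $i_0 = i_1$, giving a scalar multiple of $m$, or $i_0 \neq i_1$, giving a genuinely different column monomial of the same total degree).

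The statement for $\kappa_{j_1 \to j_0}$ and $I^\colsum_\alpha$ is obtained by interchanging the roles of rows and columns throughout, and the proof is word-for-word symmetric.

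There is no real obstacle: once the derivation property is noted, everything reduces to reading off the exponent vectors of the output monomials and confirming that total row/column degrees are preserved, which keeps the monomial generators of $I^\rowsum_\beta$ and $I^\colsum_\alpha$ closed under the respective polarization.
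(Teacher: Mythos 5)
Your proposal is correct and follows essentially the same route as the paper: observe that the polarization operators are derivations, reduce via the Leibniz rule to checking the two families of generators, and verify by direct computation that row sums map to row sums and that column monomials of excessive degree map to (scalar multiples of) column monomials of the same excessive total degree. The paper's proof is identical in substance, differing only in notation.
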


Recall that a {\em derivation} of an $\FF$-algebra $A$ is a linear map $d: A \to A$ which satisfies the Leibniz rule $d(f \cdot g) = f \cdot d(g) + g \cdot d(f)$ for all $f,g \in A$.

\begin{proof}
    We prove the case of $\rho_{i_1 \to i_0}$; the case of $\kappa_{j_1 \to j_0}$ is similar.  The map $\rho_{i_1 \to i_0}$ is a derivation of $\FF[\xxx_{k \times p}]$. By the Leibniz property, it suffices to show $\rho_{i_1 \to i_0}(g) \in I^\rowsum_\beta$ when $g$ is a {\em generator} of $I^\rowsum_\beta$. Indeed, for $1 \leq i \leq k$ we have
    \[ \rho_{i_1 \to i_0}: x_{i,1} + \cdots + x_{i,p} \mapsto \begin{cases}
        x_{i_0,1} + \cdots + x_{i_0,p} & \text{if $i = i_1$}, \\
        0 & \text{otherwise.}
    \end{cases}\]
    If $1 \leq j \leq p$ and $x_{1,j}^{b_1} \cdots x_{k,j}^{b_k}$ is a monomial with $b_1 + \cdots + b_k > \beta_k$, one has 
    \[
    \rho_{i_1 \to i_0}: x_{1,j}^{b_1} \cdots x_{k,j}^{b_k} \mapsto b_{i_1} \cdot  \frac{x_{i_0}}{x_{i_1}} \cdot x_{1,j}^{b_1} \cdots x_{k,j}^{b_k} \in I^\rowsum_\beta
    \]
    where the image is interpreted as zero when $b_{i_1} = 0$.
\end{proof}

The operator $\rho_{i_1 \to i_0}$ does not preserve the ideal $I^{\colsum}_\alpha$. Similarly, the operator $\kappa_{j_1 \to j_0}$ does not preserve the ideal $I^\rowsum_\beta$. Neither of these polarization operators preserves $I_{\alpha,\beta}$. This lack of preservation and Lemma~\ref{lem:polarization-preserves-ideals} were the motivation for defining  $I^\colsum_\alpha$ and $I^\rowsum_\beta$.

We want to understand the interaction between polarization and  diagonal term orders. If $m > 0$, the iterated polarlization operator \[(\rho_{i_1 \to i_0})^m = \overbrace{\rho_{i_1 \to i_0} \circ \cdots \circ \rho_{i_1 \to i_0}}^{m}: \FF[\xxx_{k \times p}] \longrightarrow \FF[\xxx_{k \times p}]\]
moves $m$ variables from row $i_1$ to row $i_0$, and similarly for $(\kappa_{j_1 \to j_0})^m$. We emulate this combinatorially with the following shift operations on matrices. If $1 \leq i \leq k$ and $1 \leq j \leq p$, we let $E_{i,j} \in \Mat_{k \times p}(\ZZ_{\geq 0})$ be the matrix with a one in position $(i,j)$ and zeros elsewhere.

\begin{defn}
    \label{def:shift-operators}
    Let $A = (a_{i,j}) \in \Mat_{k \times p}(\ZZ_{\geq 0})$ and let $1 \leq i_0,i_1 \leq k$. Let $r := \row(A)_{i_1}$ be the sum of entries in row $i_1$. For $0 \leq m \leq r$, define the {\em row shift} $\shift^\row_{i_1 \to i_0, m}(A) \in \Mat_{k \times p}(\ZZ_{\geq 0})$ by
    \[
    \shift^\row_{i_1 \to i_0, m}(A) = A + \sum_{j=1}^p c_j \cdot (E_{i_0,j} - E_{i_1,j})
    \]
    where the sequence $(c_1,c_2,\dots,c_p)$ is lexicographically maximal so that $\dots$
    \begin{itemize}
        \item we have $0 \leq c_j \leq a_{i_1,j}$ for all $j$, and
        \item we have $c_1 + c_2 + \cdots + c_p = m$.
    \end{itemize}
\end{defn}

Loosely speaking, the operator $\shift^\row_{i_1 \to i_0,m}(A)$ transfers a total value of $m$ from row $i_1$ to row $i_0$ within columns in a lexicographically maximal way. If $k = 4$ and
\begin{footnotesize}
\[    A = \begin{pmatrix}
        0 & 2 & 1 & 3 & 1 \\ 2 & 0 & 1 & 1 & 0  \\ 0 &3 & 1 & 2 & 1\\ 1 & 0 & 2 & 1 & 0 
    \end{pmatrix}\]
\end{footnotesize}
the matrices $\shift^\row_{4 \to 2,m}(A)$ for $m=1,2,3$ are as follows:
\begin{footnotesize}
    \[
    \shift^\row_{4 \to 2,1}(A) = \begin{pmatrix}
        0 & 2 & 1 & 3 & 1 \\ 3 & 0 & 1 & 1 & 0  \\ 0 &3 & 1 & 2 & 1\\ 0 & 0 & 2 & 1 & 0 
    \end{pmatrix}, \quad 
    \shift^\row_{4 \to 2,2}(A) = \begin{pmatrix}
        0 & 2 & 1 & 3 & 1 \\ 3 & 0 & 2 & 1 & 0  \\ 0 &3 & 1 & 2 & 1\\ 0 & 0 & 1 & 1 & 0 
    \end{pmatrix}, \quad 
    \shift^\row_{4 \to 2,3}(A) = \begin{pmatrix}
        0 & 2 & 1 & 3 & 1 \\ 3 & 0 & 3 & 1 & 0  \\ 0 &3 & 1 & 2 & 1\\ 0 & 0 & 0 & 1 & 0 
    \end{pmatrix}.
    \]
\end{footnotesize}
Row shift gives the diagonal leading term of the row polarization of a monomial.

\begin{lemma}
    \label{lem:polarization-on-monomials}
    Let $\prec$ be a diagonal term order and suppose $1 \leq i_0 < i_1 \leq k$. If $0 \leq m \leq \row(A)_{i_1}$, the $\prec$-leading term of $(\rho_{i_0 \to i_1})^m(\xxx^A)$ is
    \[ \initial_\prec (\rho_{i_0 \to i_1})^m(\xxx^A) = \xxx^B \quad \quad \text{where } B = \shift^\row_{i_0 \to i_1, m}(A).\]
\end{lemma}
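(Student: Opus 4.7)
The plan is to expand $(\rho_{i_0 \to i_1})^m(\xxx^A)$ explicitly as a positive integer combination of monomials, and then use the diagonal property of $\prec$ to single out the leading term via an exchange argument on diagonal degrees.

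First I would compute the expansion. Since $\rho_{i_0 \to i_1} = \sum_{j=1}^{p} x_{i_1,j}\,\partial/\partial x_{i_0,j}$ is a first-order differential operator, each of the $m$ applications selects a column index $j_t$ and transfers one unit from $x_{i_0,j_t}$ to $x_{i_1,j_t}$. Summing over sequences $(j_1,\dots,j_m) \in [p]^m$ and regrouping by the column multiplicities $c_j := \#\{t : j_t = j\}$, one obtains
\[
(\rho_{i_0 \to i_1})^m(\xxx^A) \;=\; \sum_{c} \gamma_c \cdot \xxx^{A + \sum_{j=1}^{p} c_j (E_{i_1,j} - E_{i_0,j})},
\]
where $c = (c_1,\dots,c_p)$ ranges over nonnegative integer vectors satisfying $c_j \leq a_{i_0,j}$ and $\sum_j c_j = m$, and $\gamma_c$ is a positive integer assembled from a multinomial coefficient and the relevant falling factorials coming from the derivatives. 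Positivity of every $\gamma_c$ is the key point: it rules out cancellations, so the $\prec$-leading monomial is determined purely by the set of exponent vectors that appear.

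Second, I would identify which admissible $c$ produces the $\prec$-largest monomial. Transferring one unit from cell $(i_0,j)$ to cell $(i_1,j)$ subtracts $1$ from the diagonal-degree entry at position $i_0 + j - 1$ and adds $1$ at position $i_1 + j - 1$; since $i_0 < i_1$, the subtraction always occurs at a strictly smaller diagonal index than the compensating addition. To lex-maximize the diagonal-degree vector, the losses should be pushed to the largest diagonals possible, subject to the constraints $c_j \leq a_{i_0,j}$ and $\sum_j c_j = m$. A short exchange argument — swapping one unit between $c_j$ and $c_{j'}$ for $j \neq j'$ and tracking which diagonal entry changes first in the lex order — shows that any admissible $c$ which is not the greedy choice is strictly lex-dominated by another admissible $c$. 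The greedy configuration is exactly the one specified in Definition~\ref{def:shift-operators}, so the unique lex-maximizer corresponds to $B = \shift^\row_{i_0 \to i_1, m}(A)$.

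The main obstacle is the careful bookkeeping in this exchange step: the diagonals $i_0 + j - 1$ and $i_1 + j - 1$ for varying $j$ interleave, so one must verify that a single unit swap between two columns always produces a strict and predictable change in the diagonal-degree vector at a controlled coordinate. Once this is done, uniqueness of the lex-optimum and matching with $\shift^\row_{i_0 \to i_1, m}(A)$ follow immediately, and combined with the positivity of the $\gamma_c$ this completes the proof.
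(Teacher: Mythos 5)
Your overall strategy is the same as the paper's: expand $(\rho)^m(\xxx^A)$ as a positive combination of monomials $\xxx^{C}$ indexed by admissible column-transfer vectors $c=(c_1,\dots,c_p)$, note that positivity rules out cancellation, and then pick out the $\prec$-largest exponent by a lexicographic comparison of diagonal degrees. Those ingredients are sound (and the injectivity of $c\mapsto C$, which you implicitly need, is also easy). The problem is the direction of the transfer, and it is not merely cosmetic. The printed subscripts in the lemma are garbled, but the hypothesis $0\leq m\leq \row(A)_{i_1}$, Definition~\ref{def:shift-operators} (which sets $r:=\row(A)_{i_1}$ and requires $c_j\leq a_{i_1,j}$), and the way the lemma is used in Lemma~\ref{lem:polarization-leading} all force the intended operator to move mass \emph{from row $i_1$ up to row $i_0$}, i.e.\ $\sum_j x_{i_0,j}\,\partial/\partial x_{i_1,j}$; then the gains land on the earlier diagonals $i_0+j-1$ and the losses on the later diagonals $i_1+j-1$, so lex-maximizing $\ddeg$ means transferring from the \emph{leftmost} available columns, which is exactly the lexicographically maximal $(c_1,\dots,c_p)$ of Definition~\ref{def:shift-operators}. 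You instead transfer from row $i_0$ down to row $i_1$ (your constraints $c_j\leq a_{i_0,j}$, your exponent change $+E_{i_1,j}-E_{i_0,j}$, and note your reading never uses the hypothesis $m\leq\row(A)_{i_1}$). In that orientation your own greedy analysis, ``push the losses to the largest diagonals,'' means transferring from the \emph{rightmost} columns, which is the opposite of the lex-maximal $(c_1,\dots,c_p)$. So your final claim that ``the greedy configuration is exactly the one specified in Definition~\ref{def:shift-operators}'' is false as you have set things up, and the exchange argument you defer would in fact refute it. Concretely, take $k=p=2$, $i_0=1<2=i_1$, $\xxx^A=x_{1,1}x_{1,2}$, $m=1$: your operator gives $x_{2,1}x_{1,2}+x_{1,1}x_{2,2}$, with $\ddeg$ equal to $(0,2,0)$ and $(1,0,1)$ respectively, so the leading term is $x_{1,1}x_{2,2}$, coming from the lex-\emph{minimal} admissible $c=(0,1)$, not from the lex-maximal $c=(1,0)$ that the shift of Definition~\ref{def:shift-operators} would select.

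The repair is to orient the operator as the paper does (source row $i_1$, target row $i_0<i_1$), after which your plan closes cleanly and coincides with the paper's proof: the monomials appearing are $\xxx^{C}$ with $C=A+\sum_j c_j(E_{i_0,j}-E_{i_1,j})$, $0\leq c_j\leq a_{i_1,j}$, $\sum_j c_j=m$, all with strictly positive coefficients; and if two admissible vectors $c>_{lex}c'$ first differ in column $j_0$, the resulting $\ddeg$ vectors agree before position $i_0+j_0-1$ (the subtracted contribution at that position comes from column $j_0-(i_1-i_0)$, where $c$ and $c'$ still agree) and $c$ wins strictly at position $i_0+j_0-1$. Hence the lex-maximal admissible $c$ gives the strictly $\prec$-largest monomial, which is $\xxx^{B}$ with $B=\shift^\row_{i_1\to i_0,m}(A)$. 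In short: right skeleton, but the step identifying the optimizer with the shift operator is currently wrong because of the reversed direction, and it is precisely the step you left unverified.
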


\begin{proof}
    The monomials $\xxx^C$ appearing in $(\rho_{i_0 \to i_1})^m(\xxx^A)$ are those for which
    \begin{equation}
     C = A + \sum_{j=1}^p c_j \cdot (E_{i_0,j} - E_{i_1,j})
    \end{equation}
    for some tuple $(c_1,\dots,c_p)$ for which $0 \leq c_j \leq a_{i_1,j}$ and $c_1 + \cdots + c_p = m$. The diagonal degrees of $\xxx^C$ and $\xxx^A$ are related by
    \begin{equation}
        \ddeg(\xxx^C) = \ddeg(\xxx^A) + \sum_{j=1}^p c_j \cdot (\eee_{i_0+j-1} - \eee_{i_1 + j-1}).
    \end{equation}
    Since $\prec$ is a diagonal term order, we have
    \begin{equation}
        \xxx^{C} \prec \xxx^{C'} \text{ whenever } \ddeg(\xxx^C) \leq_{lex} \ddeg(\xxx^{C'}).
    \end{equation}
    These observations together with the assumption $i_0 < i_1$ complete the proof.
\end{proof}

With an eye toward the proof of Lemma~\ref{lem:polarization-on-monomials} and diagonal degree comparison, we introduce a version of the shift operator on one-dimensional arrays.

\begin{defn}
    \label{def:split-operator}
    Let $(d_1,\dots,d_n) \in (\ZZ_{\geq 0})^n$ be a sequence of nonnegative integers with $d_1 + \cdots + d_n = d$. Let $1 \leq s \leq t \leq n$ and assume that $d_1 = d_2 = \cdots = d_t = 0$. For any $0 \leq m \leq d$, an {\em $s$-step leftward shift of magnitude $m$} is a sequence of the form
    \begin{equation}
        (d'_1,\dots,d'_n) = (d_1,\dots,d_n) + \sum_{j=t+1}^{n} c_j \cdot (\eee_{j-s} - \eee_{j})
    \end{equation}
    for some integers $0 \leq c_j \leq d_j$ such that $c_{t+1} + c_{t+2} + \cdots + c_n = m$. The {\em leftward split} 
    \begin{equation}
        \spl^\lft_{s,m}(d_1,\dots,d_n)
    \end{equation}
    is the leftward shift so obtained by choosing $(c_{t+1},c_{t+2}, \dots, c_n)$ lexicographically maximal.
\end{defn}

For example, if $(d_1,\dots,d_n) = (0,0,0,2,1,0,3)$ we have the leftward splits
\[ \spl^\lft_{2,1}(d_1,\dots,d_n) = (0,1,0,1,1,0,3), \quad
\spl^\lft_{2,2}(d_1,\dots,d_n) = (0,2,0,0,1,0,3),\]
\[\spl^\lft_{2,3}(d_1,\dots,d_n) = (0,2,1,0,0,0,3), \quad
\spl^\lft_{2,4}(d_1,\dots,d_n) = (0,2,1,0,1,0,2), \]
\[ 
\spl^\lft_{2,4}(d_1,\dots,d_n) = (0,2,1,0,2,0,1), \quad
\spl^\lft_{2,4}(d_1,\dots,d_n) = (0,2,1,0,3,0,0).\]
A $2$-step leftward shift of $(d_1,\dots,d_n)$ of magnitude 3 different from $\spl^\lft_{2,3}(d_1,\dots,d_n) = (0,2,1,0,0,0,3)$ is \[(0,1,0,1,3,0,1) = (d_1,\dots,d_n) + (\eee_2 - \eee_4) + 2 \cdot (\eee_5 - \eee_7). \]
We need a simple result relating leftward shifts and lexicographical order.

\begin{lemma}
    \label{lem:split-lex}
    Suppose $(d_1,\dots,d_n)$ and $(e_1,\dots,e_n)$ are two sequences of positive integers with the same sum $d$ which satisfy the lexicographical comparison 
    \[ (e_1,\dots,e_n) \leq_{lex} (d_1,\dots,d_n). \]
    Assume that $1 \leq s \leq t$ is such that $d_1 = \cdots = d_t = 0$ and let $1 \leq m \leq d$. Assume further that 
        there exists an $s$-step leftward shift $(e'_1,\dots,e'_n)$ of $(e_1,\dots,e_n)$ of magnitude $m$ so that \[\spl^\lft_{s,m}(d_1,\dots,d_n) \leq_{lex} (e'_1,\dots,e'_n).\]
     Then $(e_1,\dots,e_n) = (d_1,\dots,d_n)$ and $(e'_1,\dots,e'_n) = \spl^\lft_{s,m}(d_1,\dots,d_n)$.   
\end{lemma}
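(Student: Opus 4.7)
The plan is to split into cases according to whether $(e_1,\dots,e_n) = (d_1,\dots,d_n)$. Since $d_1 = \cdots = d_t = 0$ and all entries are nonnegative, the lex comparison $(e_1,\dots,e_n) \leq_{lex} (d_1,\dots,d_n)$ immediately forces $e_1 = \cdots = e_t = 0$, so an $s$-step leftward shift of $e$ is well-defined. In the easy case $e = d$, the hypothesized $e'$ is a legal $s$-step leftward shift of $d$ of magnitude $m$, and the lex-maximality characterization of $d' := \spl^\lft_{s,m}(d)$ gives $e' \leq_{lex} d'$; combined with the hypothesis $d' \leq_{lex} e'$ this yields $e' = d'$.

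The case $e \neq d$ will be handled by deriving a contradiction — specifically, by showing $d' >_{lex} e'$ strictly. Let $j \geq t+1$ be the smallest index with $e_j < d_j$, and denote by $c_{t+1},\dots,c_n$ and $b_{t+1},\dots,b_n$ respectively the shift coefficients used in $d'$ (so $c_i = \min(m_i,d_i)$ where $m_i := m - c_{t+1} - \cdots - c_{i-1}$) and in $e'$ (so $0 \leq b_i \leq e_i$ and $\sum_i b_i = m$). Let $i_0 \geq t+1$ be the smallest index with $b_{i_0} \neq c_{i_0}$, taken to be $\infty$ if no such index exists. The key observation is that for $i < i_0$ one has $b_i = c_i$ and (when $i < j$) also $e_i = d_i$, so the partial budgets agree and the formulas $d'_i = d_i - c_i + c_{i+s}$ and $e'_i = e_i - b_i + b_{i+s}$ (with the boundary cases handled as in Definition~\ref{def:split-operator}) give $d'_i = e'_i$ for every $i < i_0 - s$. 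At position $i_0 - s$ the difference collapses to $d'_{i_0-s} - e'_{i_0-s} = c_{i_0} - b_{i_0}$; when $i_0 \leq j$ with $b_{i_0} < c_{i_0}$, the lex-strict inequality $d' >_{lex} e'$ follows immediately.

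The main obstacle is the delicate subcase in which the equalities $b_i = c_i$ persist all the way through $i = j$. This requires $c_j \leq e_j$; combined with $e_j < d_j$, this forces $c_j = m_j \leq e_j$, so the greedy budget is exhausted exactly at position $j$, giving $c_k = b_k = 0$ for all $k > j$. In that subcase the cascading contributions $c_{i+s}$ and $b_{i+s}$ to positions $i \in [j-s,\, j-1]$ continue to match (either by the agreed range, by equaling $m_j$ at the index $i+s = j$, or by vanishing for $i+s > j$), so $d'_i = e'_i$ for every $i < j$. Comparing directly at position $j$ then yields $d'_j = d_j - m_j$ and $e'_j = e_j - m_j$, and $e_j < d_j$ gives $d'_j > e'_j$. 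This produces the required lex-strict inequality $d' >_{lex} e'$, contradicting the hypothesis $d' \leq_{lex} e'$ and completing the argument.
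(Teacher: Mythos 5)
Your proof is correct, but it is organized differently from the paper's. The paper argues in two lines: it notes (as ``clear'') that any $s$-step leftward shift of $(e_1,\dots,e_n)$ of magnitude $m$ is $\leq_{lex} \spl^\lft_{s,m}(e_1,\dots,e_n)$, sandwiches $(e'_1,\dots,e'_n)$ between $\spl^\lft_{s,m}(d_1,\dots,d_n)$ and $\spl^\lft_{s,m}(e_1,\dots,e_n)$, and then combines this with the hypothesis $(e_1,\dots,e_n) \leq_{lex} (d_1,\dots,d_n)$, implicitly using that $\spl^\lft_{s,m}$ is strictly monotone for $\leq_{lex}$, to force $e = d$ and $e' = \spl^\lft_{s,m}(d)$. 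You instead compare $\spl^\lft_{s,m}(d_1,\dots,d_n)$ directly against the arbitrary shift $e'$ via the greedy coefficients $c_i = \min(m_i,d_i)$ versus $b_i$, locating the first discrepancy $i_0$ and the first index $j$ with $e_j < d_j$; in effect your index-by-index analysis proves exactly the maximality and monotonicity facts the paper treats as evident, so your argument is more self-contained at the cost of length. One small point you leave implicit: your two cases are ``$i_0 \leq j$ with $b_{i_0} < c_{i_0}$'' and ``$b_i = c_i$ persists through $i = j$,'' so you should note that $i_0 \leq j$ with $b_{i_0} > c_{i_0}$ cannot occur, since $b_{i_0} \leq \min(e_{i_0}, m_{i_0}) \leq \min(d_{i_0}, m_{i_0}) = c_{i_0}$ (using $e_{i_0} \leq d_{i_0}$ for $i_0 \leq j$ and the fact that the remaining budgets agree); this is immediate from your setup, so it is an omission of a sentence rather than a gap in the argument.
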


\begin{proof}
    It is clear that $(e'_1,\dots,e'_n) \leq_{lex}\spl^\lft_{s,m}(e_1,\dots,e_n)$, so one has 
    \begin{equation}
    \label{eq:split-lex}
    \spl^\lft_{s,m}(d_1,\dots,d_n) \leq_{lex} (e'_1,\dots,e'_n) \leq_{lex} \spl^\lft_{s,m}(e_1,\dots,e_n).
    \end{equation}
    The inequality $\spl^\lft_{s,m}(d_1,\dots,d_n) \leq _{lex} \spl^\lft_{s,m}(e_1,\dots,e_n)$ the hypothesis $(e_1,\dots,e_n) \leq_{lex} (d_1,\dots,d_n)$ and \eqref{eq:split-lex} combine to show $(e_1,\dots,e_n) = (d_1,\dots,d_n)$, so that $(e'_1,\dots,e'_n) =  \spl^\lft_{s,m}(e_1,\dots,e_n)$ by \eqref{eq:split-lex}.
\end{proof}

Polarization operators have especially nice properties when they act on polynomials with `zigzag' leading terms. Recall that the {\em support} of a matrix $A = (a_{i,j})$ is the set of positions $(i,j)$ for which $a_{i,j} \neq 0$.

\begin{defn}
    \label{def:zigzag}
    A matrix $A$ is {\em zigzag} if its support is a zigzag. A monomial $\xxx^A$ is {\em zigzag} if its exponent matrix $A$ is zigzag.
\end{defn}

An example $4 \times 4$ zigzag matrix is 
\begin{footnotesize}
\[ A = \begin{pmatrix} 0 & 0 & 0 & 0 \\  1 & 2 & 1 & 0 \\ 0 & 0 & 2 & 1 \\ 0 & 0 & 0 & 1 \end{pmatrix}.\]
\end{footnotesize}
The corresponding zigzag monomial is $\xxx^A = x_{2,1} \cdot x_{2,2}^2 \cdot x_{2,3} \cdot x_{3,3}^2 \cdot x_{3,4} \cdot x_{4,4}$. If $A$ is a zigzag matrix with at least two nonzero rows, the {\em row merge} $\merge^\row(A)$ is obtained by adding the first nonzero row to the second nonzero row of $A$ and replacing the first nonzero row with a row of zeros. In our example we have
\begin{footnotesize}
\[ \merge^\row(A) = \begin{pmatrix} 0 & 0 & 0 & 0 \\ 0 & 0 & 0 & 0 \\ 1 & 2 & 3 & 1 \\ 0 & 0 & 0 & 1 \end{pmatrix}.\]
\end{footnotesize}
The following observation records properties of the $\shift$ and $\merge$ operations on zigzag matrices.

\begin{observation}
    \label{obs:shift-merge-inverse} 
    Let $A$ be a zigzag matrix whose first nonzero row has index $i_1$. Let $r = \row(A)_{i_1}$ be the sum of this row and choose $i_0 < i_1$. 
    \begin{enumerate}
        \item For any $0 \leq m \leq r$, the matrix $\shift^\row_{i_1 \to i_0,m}(A)$ is zigzag. Furthermore, if $B = \shift^\row_{i_1 \to i_0,m}(A)$ we have
        \[ \ddeg(\xxx^B) = \spl^\lft_{i_1-i_0,m}(\ddeg(\xxx^A)).\]
        \item Suppose $A$ has at least two nonzero rows. Then $\merge^\row(A)$ is zigzag.
        \item Suppose $A$ has at least two nonzero rows and let $i_2$ be the minimal index $> i_1$ of a nonzero row of $A$. We have 
        \[\shift^\row_{i_2 \to i_1,r} \circ \merge^\row(A) = A.\] 
    \end{enumerate}
\end{observation}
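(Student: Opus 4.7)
The plan is to verify the three claims by direct combinatorial inspection, leaning on the structural fact that in a zigzag matrix distinct nonzero entries occupy distinct antidiagonals $i+j-1$. Part (2) is the shortest. The support of $\merge^\row(A)$ in row $i_2$ is the union of the supports in rows $i_1$ and $i_2$ of $A$, and the zigzag condition for $A$ forces the maximal nonzero column $j_a$ of row $i_1$ to satisfy $j_a \le j_1'$, where $j_1'$ is the minimal nonzero column of row $i_2$. Thus the combined nonzero columns in the new row $i_2$ remain weakly increasing, the rows strictly below $i_2$ are unchanged, and $\merge^\row(A)$ remains zigzag.

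For part (1), let $j_1 < \cdots < j_a$ list the nonzero columns of row $i_1$. Since all rows strictly above $i_1$ are zero, the lex-maximal constraint in Definition~\ref{def:shift-operators} forces $c_{j_s} = a_{i_1, j_s}$ for an initial segment $s = 1, \ldots, a'-1$, with a possibly partial $c_{j_{a'}}$ absorbing the remaining budget; the new support then consists of columns $j_1, \ldots, j_{a'}$ in row $i_0$ followed by $j_{a'}, \ldots, j_a$ in row $i_1$ (with a possible coincidence at $j_{a'}$), joined by the rows of $A$ strictly below $i_1$, and is visibly zigzag. For the diagonal-degree identity, set $s = i_1 - i_0$ and take $t = i_1 + j_1 - 2$ in the split. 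The zigzag property of $A$ guarantees that the antidiagonals in $[i_1 + j_1 - 1,\, i_1 + j_a - 1]$ carrying nonzero $d_q$ are exactly $\{i_1 + j_\ell - 1\}_\ell$, bijectively indexed by the nonzero columns of row $i_1$. Under the correspondence $j \leftrightarrow i_1 + j - 1$, column-lex-maximality in the shift matches antidiagonal-lex-maximality in $\spl^\lft_{s,m}$, and both redistribute total mass $m$; tracking the antidiagonal shift $(i_1, j) \mapsto (i_0, j)$ then yields $\ddeg(\xxx^B) = \spl^\lft_{s,m}(\ddeg(\xxx^A))$.

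For part (3), write $M = \merge^\row(A)$, so that $M$ has first nonzero row $i_2$ with entries $m_{i_2,j} = a_{i_1,j} + a_{i_2,j}$ and empty rows above. The shift $\shift^\row_{i_2 \to i_1, r}$ chooses lex-maximal $(c_j)$ subject to $c_j \le m_{i_2,j}$ and $\sum_j c_j = r = \sum_j a_{i_1,j}$. By the zigzag property, $a_{i_2, j} = 0$ for $j < j_1'$, so $m_{i_2,j} = a_{i_1,j}$ there; lex-maximality forces $c_j = a_{i_1,j}$ for all $j < j_a$. At $j = j_a$ the remaining budget is exactly $a_{i_1, j_a}$, which is $\le m_{i_2, j_a}$ in both subcases $j_a < j_1'$ and $j_a = j_1'$, so $c_{j_a} = a_{i_1, j_a}$ and the budget is exhausted. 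Hence $c_j = a_{i_1,j}$ for every $j$, and the shifted matrix coincides with $A$. The main subtlety sits in the diagonal-degree identity of (1); the mechanism making it work is that the column order on the nonzero entries of row $i_1$ is carried faithfully to the antidiagonal order on the indices $i_1 + j_s - 1$, so that lex-maximality in either indexing selects the same redistribution of mass.
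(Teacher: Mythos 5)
Your verification is correct: the key facts you use (distinct support entries of a zigzag lie on distinct antidiagonals, the entries of row $i_1$ account for all nonzero diagonal degrees in the window $[i_1+j_1-1,\,i_1+j_a-1]$, and the greedy/lex-maximal choices in $\shift^\row$ and $\spl^\lft$ correspond under $j\mapsto i_1+j-1$ since $m\le r$) are exactly what makes the statement true. The paper records this as an observation without proof, so your direct check is precisely the intended argument.
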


Let $\prec$ be a diagonal term order on $\FF[\xxx_{k \times p}]$. In order to prove that $\{ \mb(A) \,:\, A \in \CCC_{\alpha,\beta}\}$ contains the $\prec$-standard monomial basis of $R_{\alpha,\beta} = \FF[\xxx_{k \times p}]/I_{\alpha,\beta}$, we produce polynomials $f \in I_{\alpha,\beta}$ with interesting zigzag $\prec$-leading terms $\initial_\prec(f)$ which reflect the combinatorics of Theorem~\ref{thm:zigzag-characterization} and Proposition~\ref{prop:monomial-is-subtingency}. Lemma~\ref{lem:polarization-preserves-ideals} motivates the strategy of inductively constructing these polynomials using iterated polarization operators. The following result relates $\initial_\prec(f)$ and $\initial_\prec(\rho_{i_1 \to i_0})^m(f)$ when $1 \leq i_0 < i_1 \leq m$. We assume that $\initial_\prec(f)$ is zigzag and that $f$ does not involve the variable $x_{i,j}$ when $i <i_1$.

\begin{lemma}
    \label{lem:polarization-leading}
    Let $\prec$ be a diagonal term order and let $f \in \FF[\xxx_{k \times p}]$ be a nonzero row-homogeneous polynomial. Assume that $\initial_\prec(f) = \xxx^A$ where $A$ is a zigzag matrix. Let $1 \leq i_1 \leq k$ be the minimal index of a nonzero row of $A$ and let $r = \row(A)_{i_1}$ be the sum of this row. 
    
    Let $i_0 < i_1$ and $0 < m < r$. We have 
    \[ \initial_\prec (\rho_{i_1 \to i_0})^m(f) = \initial_\prec (\overbrace{\rho_{i_1 \to i_0} \circ \cdots \circ \rho_{i_1 \to i_0}}^m)(f) = \xxx^B \quad \text{where $B := \shift^\row_{i_1 \to i_0,m}(A)$.}\]
\end{lemma}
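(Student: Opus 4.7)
The strategy is to expand $(\rho_{i_1\to i_0})^m(f)$ monomial by monomial and verify that $\xxx^B$ is the unique $\prec$-largest monomial appearing with a nonzero coefficient. Write $f = c\,\xxx^A + \sum_{C} c_C\,\xxx^C$ with $c\neq 0$ and $\xxx^C\prec\xxx^A$ for every $C$ in the sum. By Lemma~\ref{lem:polarization-on-monomials} applied to $\xxx^A$, the $\prec$-leading term of $(\rho_{i_1\to i_0})^m(\xxx^A)$ is $\xxx^B$ with a positive integer coefficient, and every other monomial of that expansion has strictly smaller diagonal degree, hence lies $\prec \xxx^B$. So the proof reduces to showing $\xxx^D\prec\xxx^B$ for every monomial $\xxx^D$ occurring in $(\rho_{i_1\to i_0})^m(\xxx^C)$ for some $C\neq A$, since this simultaneously controls all monomials $\succeq\xxx^B$ and forbids any cancellation at $\xxx^B$.

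Fix such a $C$. Row-homogeneity of $f$ together with the zigzag hypothesis on $A$ forces every monomial of $f$ to be supported in rows $\ge i_1$, so both $\ddeg(\xxx^C)$ and $\ddeg(\xxx^A)$ start with at least $i_1-1\ge s:=i_1-i_0$ zeros, and the framework of Definition~\ref{def:split-operator} and Lemma~\ref{lem:split-lex} applies with $t:=i_1-1$. A monomial $\xxx^D$ of $(\rho_{i_1\to i_0})^m(\xxx^C)$ corresponds to $D = C + \sum_j c_j(E_{i_0,j}-E_{i_1,j})$ with $0\le c_j\le C_{i_1,j}$ and $\sum_j c_j = m$, so $\ddeg(\xxx^D)$ is an $s$-step leftward shift of $\ddeg(\xxx^C)$ of magnitude $m$; meanwhile Observation~\ref{obs:shift-merge-inverse}(1) identifies $\ddeg(\xxx^B) = \spl^\lft_{s,m}(\ddeg(\xxx^A))$. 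If $\ddeg(\xxx^C)<_{lex}\ddeg(\xxx^A)$, Lemma~\ref{lem:split-lex} forces $\ddeg(\xxx^D)<_{lex}\ddeg(\xxx^B)$ and so $\xxx^D\prec\xxx^B$. If $\ddeg(\xxx^C)=\ddeg(\xxx^A)$, the same lemma yields $\ddeg(\xxx^D)\le_{lex}\ddeg(\xxx^B)$, with equality only when the shift vector $(c_j)$ coincides with the shift vector $(a_j)$ defining $B=\shift^\row_{i_1\to i_0,m}(A)$, by linear independence of $\{\eee_{i_0+j-1}-\eee_{i_1+j-1}\}_{j=1}^p$.

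The main obstacle is the tight sub-case $\ddeg(\xxx^D)=\ddeg(\xxx^B)$, where the diagonal term order is inconclusive and one must argue via the tiebreaker in $\prec$. The key observation is that in this sub-case $(c_j)=(a_j)$ gives $D-C = B-A$ as integer matrices, equivalently the monomial identity $\xxx^D\cdot\xxx^A = \xxx^C\cdot\xxx^B$ in $\FF[\xxx_{k\times p}]$. The multiplicative property of $\prec$ applied to this identity converts the hypothesis $\xxx^C\prec\xxx^A$ directly into $\xxx^D\prec\xxx^B$, with no appeal to the specific form of the tiebreaker. Combining all cases yields $\xxx^D\prec\xxx^B$ for every monomial $\xxx^D$ arising from a $C\ne A$; in particular $\xxx^D\neq\xxx^B$, so the coefficient of $\xxx^B$ in $(\rho_{i_1\to i_0})^m(f)$ is exactly $c$ times its positive coefficient in $(\rho_{i_1\to i_0})^m(\xxx^A)$, which completes the proof that $\initial_\prec(\rho_{i_1\to i_0})^m(f) = \xxx^B$.
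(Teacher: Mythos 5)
Your proof is correct, and it follows the same overall skeleton as the paper's argument: reduce the claim to comparing $\xxx^B$ with the monomials produced by the other terms of $f$, pass to diagonal degrees, and invoke Observation~\ref{obs:shift-merge-inverse}(1) together with Lemma~\ref{lem:split-lex}. The genuine difference is in the tie case $\ddeg(\xxx^D)=\ddeg(\xxx^B)$ with $\ddeg(\xxx^C)=\ddeg(\xxx^A)$. The paper handles this by factoring each monomial into its parts in rows $<i_1$, $=i_1$, and $>i_1$ and running a ratio argument (its equations (4.19)--(4.29)) to contradict $\xxx^{A'}\prec\xxx^A$; you instead observe that equality of the shifted diagonal degrees, by linear independence of the vectors $\eee_{i_0+j-1}-\eee_{i_1+j-1}$ for $1\le j\le p$, forces the shift tuple for $D$ to coincide with the one defining $B$, whence $D-C=B-A$, i.e. $\xxx^D\cdot\xxx^A=\xxx^C\cdot\xxx^B$, and multiplicativity (with cancellation) of the term order converts $\xxx^C\prec\xxx^A$ directly into $\xxx^D\prec\xxx^B$. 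This streamlines the hardest step, avoids the block factorization entirely, and makes the ``no cancellation at $\xxx^B$'' point explicit since you compare against every monomial of every expansion rather than only the leading monomial $\xxx^{B'}$ of each expansion as the paper does. One small imprecision: the fact that every monomial of $f$ is supported in rows $\ge i_1$ follows from row-homogeneity together with the minimality of $i_1$ among nonzero rows of $A$; the zigzag hypothesis is needed only to apply Observation~\ref{obs:shift-merge-inverse}(1), not for this support statement.
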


The assumption that $1 \leq i_1 \leq k$  is the {\em minimal} index of a nonzero row of $A$ together with the row-homogeneity of $f$ implies that $f$ does not involve variables $x_{i,j}$ for which $i < i_1$.
Lemma~\ref{lem:polarization-leading} would be false without this minimality assumption. For example, if 
\[ f = x_{1,2} \cdot x_{3,3} + x_{1,3} \cdot x_{3,1}  \]
then $f$ is row-homogeneous and  $\initial_\prec(f) = x_{1,2} \cdot x_{3,3}$, which is a zigzag monomial. However, we have 
\[ \rho_{3 \to 1} (f) = x_{1,2} \cdot x_{1,3} + x_{1,3} \cdot x_{1,1}\]
so that $\initial_\prec (\rho_{3 \to 1}(f)) = x_{1,3} \cdot x_{1,1}$ instead of $x_{1,2} \cdot x_{1,3}$.

\begin{proof}
    Lemma~\ref{lem:polarization-on-monomials} implies that $\xxx^B$ is the $\prec$-leading monomial of $(\rho_{i_1 \to i_0})^m(\xxx^A)$.  Let $\xxx^{A'} \neq \xxx^A$ be a distinct monomial appearing in $f$. Since $\prec$ is a diagonal term order we have
    \begin{equation}
        \label{eq:polarization-one}
        \xxx^{A'} \prec \xxx^A \text{ and therefore } \ddeg(\xxx^{A'}) \leq_{lex} \ddeg(\xxx^A).
    \end{equation}
    Let $\xxx^{B'} = \initial_\prec (\rho_{i_1 \to i_0})^m(\xxx^{A'})$ be the $\prec$-largest monomial which appears in $(\rho_{i_1 \to i_0})^m(\xxx^{A'})$. By the definition of $B$ and Lemma~\ref{lem:polarization-on-monomials} we have
    \begin{equation}
        \label{eq:polarization-two}
        B = \shift^\row_{i_1 \to i_0,m}(A) \text{ and }
        B' = \shift^\row_{i_1 \to i_0,m}(A').
    \end{equation}
    Assume that 
    \begin{equation}
        \label{eq:polarization-three}
        \xxx^{B} \preceq \xxx^{B'} \text{ and therefore } \ddeg(\xxx^B) \leq_{lex} \ddeg(\xxx^{B'}).
    \end{equation}
    We derive the contradiction $\xxx^A \preceq \xxx^{A'}$, thus proving the lemma.

    If $\xxx^C$ is any monomial in $\FF[\xxx_{k \times p}]$ and $C = (c_{i,j})$, we use the row $i_1$ to factor 
    \begin{equation}
        \label{eq:polarization-five}
        \xxx^C = \xxx^C_{< \, i_1} \cdot \xxx^C_{ = \, i_1} \cdot \xxx^C_{> \, i_1}
    \end{equation}
    where
    \begin{equation}
        \label{eq:polarization-six}
        \xxx^C_{< \, i_1} := \prod_{i=1}^{i_1-1}\prod_{j=1}^p x_{i,j}^{c_{i_,j}}, \quad 
        \xxx^C_{= \, i_1} := \prod_{j=1}^p x_{i_1,j}^{c_{i_1,j}}, \quad 
        \xxx^C_{> \, i_1} := \prod_{i = i_1 + 1}^k \prod_{j=1}^p x_{i,j}^{c_{i,j}}.
    \end{equation}
    The proof is a careful analysis of these factorizations for $C = A,B,A',B'$.

    Our minimality assumption on $i_1$ and \eqref{eq:polarization-two} imply
    \begin{equation}
        \label{eq:polarization-seven}
        \xxx^A_{< \, i_1} = \xxx^{A'}_{< \, i_1} = 1, \quad  \xxx^{A}_{> \, i_1} = \xxx^B_{> \, i_1}, \quad \xxx^{A'}_{> \, i_1} = \xxx^{B'}_{> \, i_1}.
    \end{equation}
    By \eqref{eq:polarization-two}, we see that
    \begin{multline}
        \label{eq:polarization-eight}
        \ddeg(\xxx^{B}), \, \ddeg(\xxx^{B'}) \text{ are (respectively) $(i_1-i_0)$-step leftward shifts} \\  \text{of $\ddeg(\xxx^A), \, \ddeg(\xxx^{A'})$ of magnitude $m$.}
    \end{multline}
    Since $A$ is zigzag, \eqref{eq:polarization-two} and Observation~\ref{obs:shift-merge-inverse} (1) yield
    \begin{equation}
        \label{eq:polarization-nine}
        \ddeg(\xxx^B) = \spl^\lft_{i_1-i_0,m} (\ddeg(\xxx^A)).
    \end{equation}
    The conditions \eqref{eq:polarization-one}, \eqref{eq:polarization-three}, \eqref{eq:polarization-eight}, and  \eqref{eq:polarization-nine} are exactly the hypotheses of Lemma~\ref{lem:split-lex}. Applying Lemma~\ref{lem:split-lex} gives 
    \begin{equation}
        \label{eq:polarization-ten}
        \ddeg(\xxx^A) = \ddeg(\xxx^{A'}) \quad \text{and} \quad \ddeg(\xxx^B) = \ddeg(\xxx^{B'}) = \spl^\lft_{i_1-i_0,m}(\ddeg(\xxx^A)).
    \end{equation}
    Since the operation $\shift^{\row}_{i_1 \to i_0,m}(-)$ moves entries from row $i_1$ to row $i_0$ within columns while leaving other rows unchanged, the conditions \eqref{eq:polarization-two}, \eqref{eq:polarization-seven}, and \eqref{eq:polarization-ten} force
    \begin{equation}
        \label{eq:polarization-eleven}
        \xxx^B_{< \, i_1} = \xxx^{B'}_{< \, i_1} \quad \text{and} \quad \frac{\xxx^A_{= \, i_1}}{\xxx^{B}_{= \, i_1}} = \frac{\xxx^{A'}_{= \, i_1}}{\xxx^{B'}_{= \, i_1}}.
    \end{equation}
    The ratio $\frac{\xxx^A_{= \, i_1}}{\xxx^{B}_{= \, i_1}} = \frac{\xxx^{A'}_{= \, i_1}}{\xxx^{B'}_{= \, i_1}}$ is a monomial (i.e. has no negative exponents) by \eqref{eq:polarization-two}.  Since $\xxx^B \preceq \xxx^{B'}$ by \eqref{eq:polarization-three} and $\xxx^B_{< \, i_1} = \xxx^{B'}_{< \, i_1}$ by \eqref{eq:polarization-eleven}, we obtain
    \begin{equation}
        \label{eq:polarization-twelve}
        \xxx^B_{= \, i_1} \cdot \xxx^B_{> \, i_1} \preceq \xxx^{B'}_{= \, i_1} \cdot \xxx^{B'}_{> \, i_1}
    \end{equation}
    because $\prec$ is a term order. The equality of ratios in \eqref{eq:polarization-eleven} the status of $\prec$ as a term order give
    \begin{equation}
        \label{eq:polarization-thirteen}
        \xxx^A_{= \, i_1} \cdot \xxx^B_{> \, i_1} \preceq \xxx^{A'}_{= \, i_1} \cdot \xxx^{B'}_{> \, i_1}.
    \end{equation}
    Combining \eqref{eq:polarization-seven} and \eqref{eq:polarization-thirteen}, we arrive at 
    \begin{multline}
        \label{eq:polarization-fifteen}
        \xxx^A = \xxx^A_{< \, i_1} \cdot \xxx^A_{= \,i_1} \cdot \xxx^{A}_{> \, i_1} = 
        \xxx^A_{= \,i_1} \cdot \xxx^{A}_{> \, i_1} = 
        \xxx^A_{= \,i_1} \cdot \xxx^{B}_{> \, i_1} \preceq  \\
        \xxx^{A'}_{= \,i_1} \cdot \xxx^{B'}_{> \, i_1} = 
        \xxx^{A'}_{= \,i_1} \cdot \xxx^{A'}_{> \, i_1} = 
        \xxx^{A'}_{< \, i_1} \cdot \xxx^{A'}_{= \,i_1} \cdot \xxx^{A'}_{> \, i_1} = \xxx^{A'}
    \end{multline}
    which contradicts the condition $\xxx^{A'} \prec \xxx^A$ of \eqref{eq:polarization-one}.
\end{proof}

Lemma~\ref{lem:polarization-leading} applies to a very special class of polynomials. Fortunately, this is exactly the class of polynomials we will need. Lemma~\ref{lem:polarization-leading} gives the following two-dimensional enhancement of Corollary~\ref{cor:deduced-polynomials}.

\begin{lemma}
    \label{lem:zigzag-leading}
    Let $\prec$ be a diagonal term order on $\FF[\xxx_{k \times p}]$.
    Let $\beta \models_0 n$ be a weak composition with $\ell(\beta) = p$ and consider the row sum ideal $I^\rowsum_\beta \subseteq \FF[\xxx_{k \times p}]$. Let $A = (a_{i,j})$ be a $k \times p$ zigzag matrix. Suppose there exists $1 \leq j \leq p$ so that 
    \begin{equation}
    \label{eq:zigzag-violation}
        \sum_{q=1}^j \col(A)_q > \sum_{q=1}^{j-1} (\beta_q - \col(A)_q).
    \end{equation}
    There exists a nonzero row-homogeneous polynomial $f \in I^\rowsum_\beta$ such that $\initial_\prec(f) = \xxx^A$. Furthermore, if the smallest row in $A$ has index $i$, we may assume that $f$ involves no variables with row index $< i$.
\end{lemma}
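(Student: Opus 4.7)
I would induct on the number of nonzero rows of $A$ (if $A = 0$ then \eqref{eq:zigzag-violation} fails and the lemma is vacuous). For the base case, suppose $A$ has a single nonzero row $i$ and write $a_q := A_{i,q} = \col(A)_q$. Rearranging \eqref{eq:zigzag-violation} yields
\[ 2 a_1 + 2 a_2 + \cdots + 2 a_{j-1} + a_j > \beta_1 + \beta_2 + \cdots + \beta_{j-1}, \]
which is exactly the hypothesis of Corollary~\ref{cor:deduced-polynomials} applied with $\ddd = \beta$ in the auxiliary ring $\FF[z_1, \ldots, z_p]$. That corollary furnishes a nonzero $g \in I_\beta$ with lex-leading monomial $z_1^{a_1} \cdots z_p^{a_p}$, which may be taken homogeneous since the generators of $I_\beta$ are homogeneous. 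I would substitute $z_q \mapsto x_{i,q}$ to obtain $f \in \FF[\xxx_{k \times p}]$: under this substitution the generator $z_1 + \cdots + z_p$ becomes the row-$i$ sum, while each $z_q^{\beta_q + 1}$ becomes the column monomial $x_{i,q}^{\beta_q + 1}$ of excess degree, both of which are generators of $I^\rowsum_\beta$. Thus $f \in I^\rowsum_\beta$, and by Observation~\ref{obs:diagonal-restriction} the diagonal order $\prec$ agrees with lex on monomials in a single row, giving $\initial_\prec(f) = \xxx^A$. Since $f$ only involves row-$i$ variables, it is row-homogeneous and uses no variable of row index below $i$.

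For the inductive step, suppose $A$ has $\geq 2$ nonzero rows and let $i_1 < i_2$ be its two smallest nonzero rows. Set $A' := \merge^\row(A)$; by Observation~\ref{obs:shift-merge-inverse}(2), $A'$ is zigzag, and since $\merge^\row$ only redistributes mass within columns we have $\col(A') = \col(A)$, so \eqref{eq:zigzag-violation} persists for $A'$. Because $A'$ has smallest nonzero row $i_2 > i_1$, the inductive hypothesis provides a row-homogeneous $f' \in I^\rowsum_\beta$ with $\initial_\prec(f') = \xxx^{A'}$ involving no variable of row index below $i_2$. Setting $r := \row(A)_{i_1}$, I would take $f := (\rho_{i_2 \to i_1})^r(f')$. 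Lemma~\ref{lem:polarization-preserves-ideals} keeps $f$ in $I^\rowsum_\beta$ and Observation~\ref{obs:polarization-effect-on-degree} preserves row-homogeneity. To invoke Lemma~\ref{lem:polarization-leading}, I would verify that $A'$ is zigzag with minimal nonzero row $i_2 > i_1$ and that $0 < r < \row(A')_{i_2} = r + \row(A)_{i_2}$, the strict upper bound holding because $i_2$ is a nonzero row of $A$. The lemma then identifies $\initial_\prec(f) = \xxx^B$ with $B = \shift^\row_{i_2 \to i_1, r}(A')$, and Observation~\ref{obs:shift-merge-inverse}(3) yields $B = A$.

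The main technical obstacle will be marshalling the hypotheses of Lemma~\ref{lem:polarization-leading} at each inductive step: the intermediate leading monomial must remain zigzag, its first nonzero row must sit strictly above the target row, and the shift magnitude must be strictly less than that row's sum. All three conditions are arranged by Observation~\ref{obs:shift-merge-inverse}, whose content is essentially that $\merge^\row$ and $\shift^\row$ are mutually inverse operations within the zigzag class. The column-sum hypothesis is robust under the induction because merging and row polarization both act only within columns and hence preserve column sums.
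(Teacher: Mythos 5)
Your proposal is correct and follows essentially the same route as the paper: induction on the number of nonzero rows, with the base case handled by embedding the one-row ideal and invoking Corollary~\ref{cor:deduced-polynomials} together with Observation~\ref{obs:diagonal-restriction}, and the inductive step by applying $\merge^\row$, citing the inductive hypothesis, and recovering $\xxx^A$ via iterated row polarization through Lemma~\ref{lem:polarization-leading} and Observation~\ref{obs:shift-merge-inverse}. Your choice $\ddd = \beta$ in the base case is the one that makes the inequality of Corollary~\ref{cor:deduced-polynomials} match \eqref{eq:zigzag-violation} exactly, and your explicit checks (column sums preserved under merging, $0 < r < \row(A')_{i_2}$) are exactly the points the paper's proof relies on.
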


The similarity between inequality \eqref{eq:zigzag-violation}, the inequality of Corollary~\ref{cor:deduced-polynomials}, and the negation of the second inequality of Proposition~\ref{prop:shadow-characterization} is no coincidence. The last sentence of Lemma~\ref{lem:zigzag-leading} will not see further use, but is necessary for the following inductive argument.

\begin{proof}
    We apply upward induction on the number of nonzero rows of the matrix $A$. First suppose that $A$ has a single nonzero row with index $i$. We have a natural embedding
    \begin{equation}
        \iota: \FF[x_{i,1}, \dots, x_{i,p}] \hookrightarrow \FF[\xxx_{k \times p}].
    \end{equation}
    If $\ddd = (\beta_1 +1,\dots,\beta_p + 1)$ and $I_\ddd \subseteq \FF[x_{i,1},\dots,x_{i,p}]$ is the ideal of 
    Definition~\ref{def:one-row-quotient}, one easily sees that $\iota(I_\ddd) \subseteq I^\rowsum_\beta$ by a check on generators. Since $\prec$ resctricts to lexicographical term order on the monomials of $\FF[x_{i,1}, \dots, x_{i,p}]$ (Observation~\ref{obs:diagonal-restriction}), we may appeal to  Corollary~\ref{cor:deduced-polynomials} to complete the proof in this case.

    Now assume that at least two rows of $A$ are nonzero. Let $i_0 < i_1$ be the smallest indices of nonzero rows in $A$. The merged matrix $\bar{A} := \merge^\row(A)$ has one fewer nonzero row than $A$, and the smallest nonzero row index in $\bar{A}$ is $i_1$. The inequality \eqref{eq:zigzag-violation} is preserved under row merges (and row splits), so induction furnishes a row-homogeneous polynomial $\bar{f} \in I^\rowsum_\beta$ which does not involve variables with row index $< i_1$ such that $\initial_\prec(\bar{f}) = \xxx^{\bar{A}}$. If $m = \row(A)_{i_0}$, Observation~\ref{obs:shift-merge-inverse} implies
    \[ \shift^\row_{i_1 \to i_0,m}(\bar{A}) = (\shift^\row_{i_1 \to i_0,m} \circ \merge^\row)(A) = A.\]
    Lemma~\ref{lem:polarization-leading} implies that $f := (\rho_{i_1 \to i_0})^m(\bar{f})$ has $\prec$-leading monomial $\xxx^A$. Since $\bar{f}$ does not involve variables of row index $< i_1$, we see that $f$ does not involve variables of row index $< i_0$ and the proof is complete. 
\end{proof}

The `transposed' version of Lemma~\ref{lem:zigzag-leading} will be be cited in the key spanning result of Lemma~\ref{lem:spanning-lemma}, so we state it explicitly. 

\begin{lemma}
    \label{lem:zigzag-leading-transpose}
    Let $\prec$ be a diagonal term order on $\FF[\xxx_{k \times p}]$.
    Let $\alpha \models_0 n$ be a weak composition with $\ell(\beta) = k$ and consider the row sum ideal $I^\colsum_\alpha \subseteq \FF[\xxx_{k \times p}]$. Let $A = (a_{i,j})$ be a $k \times p$ zigzag matrix. Suppose there exists $1 \leq i \leq k$ so that 
    \begin{equation}
    \label{eq:zigzag-violation}
        \sum_{q=1}^i \row(A)_q > \sum_{q=1}^{i-1} (\alpha_q - \row(A)_q).
    \end{equation}
    There exists a nonzero column-homogeneous polynomial $f \in I^\colsum_\alpha$ such that $\initial_\prec(f) = \xxx^A$. Furthermore, if the smallest column in $A$ has index $j$, we may assume that $f$ involves no variables with column index $< j$.
\end{lemma}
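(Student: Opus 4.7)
The plan is to mirror the proof of Lemma~\ref{lem:zigzag-leading} step for step, interchanging the roles of rows and columns throughout. The architecture is identical: induct on the number of nonzero columns of the zigzag matrix $A$, with the base case handled by reduction to a one-row quotient of Section~\ref{sec:One-row} via Corollary~\ref{cor:deduced-polynomials}, and the inductive step handled by iterated column polarization.

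For the base case, when $A$ has a single nonzero column $j$ with $a_{i,j} = \row(A)_i$, embed $\iota: \FF[x_{1,j},\dots,x_{k,j}] \hookrightarrow \FF[\xxx_{k \times p}]$ and take $\ddd := (\alpha_1,\dots,\alpha_k)$. Then $\iota(I_\ddd) \subseteq I^\colsum_\alpha$: each image $x_{i,j}^{\alpha_i+1}$ is a row-$i$ monomial of degree exceeding $\alpha_i$, and $x_{1,j}+\cdots+x_{k,j}$ is a column sum. With $e_i := a_{i,j}$, the hypothesis~\eqref{eq:zigzag-violation} is equivalent to $2e_1 + \cdots + 2e_{i-1} + e_i > \alpha_1 + \cdots + \alpha_{i-1}$, which is precisely what Corollary~\ref{cor:deduced-polynomials} demands (noting that $\prec$ restricts to lex on any single column by Observation~\ref{obs:diagonal-restriction}). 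Pulling the resulting element of $I_\ddd$ through $\iota$ gives the desired $f$, and by construction $f$ involves no variables outside column $j$.

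For the inductive step, let $j_0 < j_1$ be the two smallest indices of nonzero columns of $A$ and set $\bar A := \merge^\col(A)$, defined as the transpose of $\merge^\row$: add column $j_0$ into column $j_1$ and zero out column $j_0$. The matrix $\bar A$ is still zigzag, has one fewer nonzero column, and crucially satisfies $\row(\bar A) = \row(A)$, so hypothesis~\eqref{eq:zigzag-violation} survives into $\bar A$. Induction provides a column-homogeneous $\bar f \in I^\colsum_\alpha$ with $\initial_\prec(\bar f) = \xxx^{\bar A}$ using no variables in columns $< j_1$. Setting $m := \col(A)_{j_0}$ and $f := (\kappa_{j_1 \to j_0})^m(\bar f)$, Lemma~\ref{lem:polarization-preserves-ideals} keeps $f$ in $I^\colsum_\alpha$, and the column analog of Lemma~\ref{lem:polarization-leading} (combined with the column analog of Observation~\ref{obs:shift-merge-inverse}(3)) identifies $\initial_\prec(f) = \xxx^A$. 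Since $\kappa_{j_1 \to j_0}$ only introduces variables in column $j_0$, the polynomial $f$ avoids variables in columns $< j_0$.

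The principal obstacle is establishing the column analog of Lemma~\ref{lem:polarization-leading}. The row proof in the paper factors monomials $\xxx^C = \xxx^C_{<\,i_1} \cdot \xxx^C_{=\,i_1} \cdot \xxx^C_{>\,i_1}$ along row $i_1$ and invokes Lemma~\ref{lem:split-lex}. Because $\ddeg$ is defined via antidiagonal sums $i+j-1=q$ and is therefore symmetric under the row--column swap, and because the diagonal term order also restricts to lex on any single column (Observation~\ref{obs:diagonal-restriction}), the argument transcribes verbatim after factoring along column $j_1$ instead. Once this column analog and the straightforward column versions of Definition~\ref{def:shift-operators} and Observation~\ref{obs:shift-merge-inverse} are in hand, the induction goes through without further surprises.
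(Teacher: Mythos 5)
Your argument is correct, but it is considerably more laborious than the paper's: the paper proves this lemma in one line by applying the $\FF$-algebra isomorphism induced by $x_{i,j}\leftrightarrow x_{j,i}$, which carries $I^\colsum_\alpha \subseteq \FF[\xxx_{k\times p}]$ to $I^\rowsum_\alpha \subseteq \FF[\xxx_{p\times k}]$, zigzag matrices to zigzag matrices, the inequality on partial row sums to the inequality on partial column sums, and (because $\ddeg$ depends only on $i+j-1$) pulls a diagonal term order back to a diagonal term order, so that Lemma~\ref{lem:zigzag-leading} applies verbatim in the transposed ring. Your proposal instead re-runs the entire induction of Lemma~\ref{lem:zigzag-leading} with rows and columns interchanged: the base case via the single-column embedding and Corollary~\ref{cor:deduced-polynomials} (your choice $\ddd=(\alpha_1,\dots,\alpha_k)$ is exactly what that corollary needs, and \eqref{eq:zigzag-violation} does translate to $2e_1+\cdots+2e_{i-1}+e_i>\alpha_1+\cdots+\alpha_{i-1}$ as you say), and the inductive step via $\merge^\col$, a column shift, Lemma~\ref{lem:polarization-preserves-ideals}, and a column analog of Lemma~\ref{lem:polarization-leading}, whose validity you correctly reduce to the transpose-symmetry of $\ddeg$ together with Observation~\ref{obs:diagonal-restriction}. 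The mathematical content is the same — the symmetry of the diagonal degree under transposition is precisely what legitimizes the paper's one-line reduction — so your transcription buys only self-containedness, at the cost of having to state and check the column versions of Definition~\ref{def:shift-operators}, Observation~\ref{obs:shift-merge-inverse}, Lemma~\ref{lem:polarization-on-monomials}, and Lemma~\ref{lem:polarization-leading}. If you keep your route, the column analog of Lemma~\ref{lem:polarization-leading} is the one step that should be written out rather than asserted; the cleanest way to obtain it (and all the other transposed statements simultaneously) is the transposition isomorphism itself.
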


\begin{proof}
    Apply Lemma~\ref{lem:zigzag-leading} and the $\FF$-algebra isomorphism $\FF[\xxx_{k \times p}]/I^\colsum_\alpha \cong \FF[\xxx_{p \times k}]/I^\rowsum_\alpha$ induced by $x_{i,j} \leftrightarrow x_{j,i}$.
\end{proof}

We do not know explicit formulas for polynomials $f$ guaranteed by Lemmas~\ref{lem:zigzag-leading} and \ref{lem:zigzag-leading-transpose}. This lack of explicit knowledge is inherited from our lack of explicit knowledge regarding Corollary~\ref{cor:deduced-polynomials}.

\subsection{The spanning lemma} The goal of this subsection is to show that $\{ \mb(A) \,:\, A \in \CCC_{\alpha,\beta}\}$ contains any diagonal standard monomial basis $R_{\alpha,\beta}$. The following lemma will simplify the rings under consideration.

\begin{lemma}
    \label{lem:ideal-maps}
    Let $B \in \SSS_{\alpha,\beta}$ be an $\alpha,\beta$-subtingency table. Define compositions $\bar{\alpha},\bar{\beta}$ by 
    \[ \bar{\alpha} := \alpha - \row(B) \quad \text{and} \quad \bar{\beta} := \beta - \col(B)\]
    where the subtraction is componentwise. We have the containment
    \[ \xxx^B \cdot I_{\bar{\alpha},\bar{\beta}} \subseteq I_{\alpha,\beta}\]
    of ideals in $\FF[\xxx_{k \times p}]$.
\end{lemma}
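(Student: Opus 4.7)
My plan is to verify the containment $\xxx^B \cdot I_{\bar{\alpha},\bar{\beta}} \subseteq I_{\alpha,\beta}$ by checking it on the four families of generators of $I_{\bar{\alpha},\bar{\beta}}$ listed in Definition~\ref{def:contingency-quotient}. Two of these families give the containment essentially for free, and the other two require a short degree computation.

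For the first two families, note that the row sums $x_{i,1}+\cdots+x_{i,p}$ and column sums $x_{1,j}+\cdots+x_{k,j}$ appear as generators of both $I_{\bar{\alpha},\bar{\beta}}$ and $I_{\alpha,\beta}$, independent of $\alpha$ and $\beta$. Hence $\xxx^B$ times any row sum or column sum already lies in $I_{\alpha,\beta}$.

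For the row-monomial generators, take a monomial $m = x_{i,1}^{a_1}\cdots x_{i,p}^{a_p}$ supported in row $i$ with $a_1+\cdots+a_p > \bar{\alpha}_i = \alpha_i - \row(B)_i$. Writing $B = (b_{i,j})$, we split $\xxx^B = \left(\prod_{j=1}^p x_{i,j}^{b_{i,j}}\right) \cdot \xxx^{B'}$, where $B'$ agrees with $B$ outside row $i$ and has zeros in row $i$. Then
\[
\xxx^B \cdot m \;=\; \xxx^{B'} \cdot \prod_{j=1}^p x_{i,j}^{b_{i,j}+a_j},
\]
and the row-$i$ factor has total degree $\row(B)_i + (a_1+\cdots+a_p) > \row(B)_i + \bar{\alpha}_i = \alpha_i$. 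Thus this factor is a monomial generator of $I_{\alpha,\beta}$, so the whole product lies in $I_{\alpha,\beta}$. The column-monomial generators are handled identically after swapping the roles of rows and columns, using $\bar{\beta}_j = \beta_j - \col(B)_j$.

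Since $I_{\bar{\alpha},\bar{\beta}}$ is generated by these four families, this establishes $\xxx^B \cdot I_{\bar{\alpha},\bar{\beta}} \subseteq I_{\alpha,\beta}$. There is no serious obstacle; the only thing worth emphasizing is that the row- and column-sum generators are independent of the compositions, which is precisely what makes the argument reduce to a single degree inequality in each of the remaining two cases.
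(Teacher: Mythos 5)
Your proof is correct and is exactly the generator-by-generator check that the paper has in mind -- its own proof simply says the verification on the generators of $I_{\bar{\alpha},\bar{\beta}}$ is straightforward and leaves it to the reader. You have filled in that check correctly: the row and column sums are common generators, and for the monomial generators the degree count $\row(B)_i + (a_1+\cdots+a_p) > \alpha_i$ (and its column analogue) is precisely the needed inequality.
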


\begin{proof}
    This straightforward check on the generators of $I_{\bar{\alpha},\bar{\beta}}$ is left to the reader.
\end{proof}

The containment of Lemma~\ref{lem:ideal-maps} is in fact an equality, but we will not require this stronger statement. We are in a position to prove our spanning result.

\begin{lemma}
    \label{lem:spanning-lemma}
    Let $\prec$ be a diagonal term order on $\FF[\xxx_{k \times p}]$ and let $\alpha,\beta \models_0 n$ satisfy $\ell(\alpha) = k$ and $\ell(\beta) = p$. Let $\BBB$ be the standard monomial basis of $R_{\alpha,\beta} = \FF[\xxx_{k \times p}]/I_{\alpha,\beta}$ with respect to $\prec$. Then
    \[ \BBB \subseteq \{ \mb(A) \,:\, A \in \CCC_{\alpha,\beta} \}.\]
\end{lemma}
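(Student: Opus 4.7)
The plan is to prove the contrapositive. Fix a monomial $\xxx^B$ with $\xxx^B \neq \mb(A)$ for every $A \in \CCC_{\alpha,\beta}$; I aim to exhibit $h \in I_{\alpha,\beta}$ with $\initial_\prec(h) = \xxx^B$, which forces $\xxx^B \notin \BBB$. The easy first case is $B \notin \SSS_{\alpha,\beta}$: if $\row(B)_i > \alpha_i$ for some $i$, the monomial $\prod_{j=1}^p x_{i,j}^{b_{i,j}}$ is one of the generators of $I^\colsum_\alpha \subseteq I_{\alpha,\beta}$ and divides $\xxx^B$, and a symmetric argument using $I^\rowsum_\beta$ handles the case $\col(B)_j > \beta_j$. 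So from now on I may assume $B \in \SSS_{\alpha,\beta}$.

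By Proposition~\ref{prop:shadow-characterization}, the hypothesis $\xxx^B \neq \mb(A)$ for every $A$ forces one of the inequalities \eqref{eq:x-inequality}, \eqref{eq:y-inequality} to fail. Using the row/column symmetry built into the pair Lemma~\ref{lem:zigzag-leading}, Lemma~\ref{lem:zigzag-leading-transpose}, I may assume that \eqref{eq:y-inequality} fails at some index $j$, so that
\[
\sum_{q=1}^{j} y_q \;>\; \sum_{q=1}^{j-1}(\beta_q - \col(B)_q).
\]
The core combinatorial step is to convert this numerical failure into an explicit zigzag submatrix of $B$.

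Let $B_{\leq j}$ denote the $k \times j$ truncation of $B$ to its first $j$ columns. Since each matrix-ball label at position $(i,q)$ depends only on weakly-northwest entries, the matrix-ball labeling of $B_{\leq j}$ is the restriction of the labeling of $B$. The number of distinct labels appearing in $B_{\leq j}$ therefore equals the number of labels whose westernmost ball in $B$ lies in a column $\leq j$, namely $\sum_{q=1}^{j} y_q$. By Theorem~\ref{thm:zigzag-characterization} this quantity coincides with $\zigzag(B_{\leq j})$, and the backward-tracing construction in that theorem's proof yields an explicit zigzag set $Z \subseteq [k]\times[j]$ with $\sum_{(i,q)\in Z} b_{i,q} = \sum_{q=1}^{j} y_q$. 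Define $A = (a_{i,q})$ by $a_{i,q} := b_{i,q}$ for $(i,q) \in Z$ and $a_{i,q}:=0$ otherwise; then $A$ is zigzag, $A \leq B$ entrywise, and $\col(A)_q = 0$ for $q > j$.

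To finish I set $\bar\alpha := \alpha - \row(B-A)$ and $\bar\beta := \beta - \col(B-A)$, which are nonnegative weak compositions because $B \in \SSS_{\alpha,\beta}$ and $A \leq B$. The identity $\bar\beta_q - \col(A)_q = \beta_q - \col(B)_q$ converts the failing inequality into $\sum_{q=1}^{j}\col(A)_q > \sum_{q=1}^{j-1}(\bar\beta_q - \col(A)_q)$, which is exactly the hypothesis of Lemma~\ref{lem:zigzag-leading} with $\bar\beta$ in place of $\beta$. That lemma furnishes a row-homogeneous $f \in I^\rowsum_{\bar\beta} \subseteq I_{\bar\alpha,\bar\beta}$ (inclusion by Observation~\ref{obs:ideal-sum}) with $\initial_\prec(f) = \xxx^A$. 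Lemma~\ref{lem:ideal-maps} then gives $h := \xxx^{B-A}\cdot f \in I_{\alpha,\beta}$, and since $\prec$ is a term order we have $\initial_\prec(h) = \xxx^{B-A}\cdot\xxx^A = \xxx^B$. I expect the main obstacle to be the third paragraph: recognizing that the failure of \eqref{eq:y-inequality} at index $j$ is detected precisely by the longest zigzag of the column-truncated matrix $B_{\leq j}$, which is what allows the zigzag-leading-term machinery of Section~\ref{sec:One-row} to be invoked with the reduced compositions $\bar\alpha,\bar\beta$.
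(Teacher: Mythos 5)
Your proposal is correct and follows essentially the same route as the paper: reduce to subtingency tables, invoke Proposition~\ref{prop:shadow-characterization}, extract a zigzag submatrix $A \leq B$ supported in the first $j$ columns whose entry sum is $\sum_{q=1}^{j} y_q$, and then combine Lemma~\ref{lem:zigzag-leading} (or its transpose), Observation~\ref{obs:ideal-sum}, and Lemma~\ref{lem:ideal-maps} to manufacture $h \in I_{\alpha,\beta}$ with $\initial_\prec(h) = \xxx^B$. The only (harmless) difference is cosmetic: the paper picks $j$ minimal and backward-traces from a ball labeled $\ell$ in column $j$, while you truncate to $B_{\leq j}$ and apply Theorem~\ref{thm:zigzag-characterization} there, using that matrix-ball labels are unchanged under column truncation — the same backward-tracing construction in a slightly cleaner packaging.
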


Recall that the set $\SSS_{\alpha,\beta}$ of $\alpha,\beta$-subtingency tables consists of $\ZZ_{\geq 0}$-matrices $C = (c_{i,j})$ for which we have the componentwise inequalities $\row(C) \leq \alpha$ and $\col(C) \leq \beta$.

\begin{proof}
    Let $C = (c_{i,j}) \in \Mat_{k \times p}(\ZZ_{\geq 0})$ be a matrix such that $\xxx^C \notin \{ \mb(A) \,:\, A \in \CCC_{\alpha,\beta} \}$. It suffices to establish a congruence of the form
    \begin{equation}
    \label{eq:goal-congruence}
        \xxx^C \equiv \text{(an $\FF$-linear combination of monomials $\xxx^{C'} \prec \xxx^C)$} \mod I_{\alpha,\beta}.
    \end{equation}
    If $C$ is not an $\alpha,\beta$-subtingency table, we have $\xxx^C \in I_{\alpha,\beta}$ and \eqref{eq:goal-congruence} is obvious. We may therefore assume that $C \in \SSS_{\alpha,\beta}$ is an $\alpha,\beta$-subtingency table.

    Use $C$ to construct a labeled grid of balls as in the matrix-ball construction. As in Proposition~\ref{prop:shadow-characterization}, let $x_i$ be the number of northern balls in row $i$ and let $y_j$ be the number of western balls in column $j$. Since $C \in \SSS_{\alpha,\beta}$ and yet $\xxx^C \notin \{ \mb(A) \,:\, A \in \CCC_{\alpha,\beta} \}$, Proposition~\ref{prop:shadow-characterization} implies that at least one of the following is true.
    \begin{enumerate}
        \item There exists $1 \leq i \leq k$ such that 
        \[ \sum_{q=1}^i x_q > \sum_{q=1}^{i-1} (\alpha_q - \row(C)_q).\]
        \item There exists $1 \leq j \leq p$ such that 
        \[ \sum_{q=1}^j y_q > \sum_{q=1}^{j-1} (\beta_q - \col(C)_q).\]
    \end{enumerate}
    Let us assume that (2) holds; the case of (1) is similar. We perform an argument similar to that used to prove Theorem~\ref{thm:zigzag-characterization}.

    Choose $1 \leq j \leq k$ minimal such that (1) holds and define $\ell := \sum_{q=1}^{j} y_q$.  By the minimality of $j$, there exists a northern ball labeled $\ell_1 := \ell$ in column $j_1 := j$. Let $(i_1,j_1)$ be the position of such a ball and let $\ell_2$ be the minimal label of a ball in position $(i_1,j_1)$. If $\ell_2 > 1$, there exists a position $(i_2,j_2)$ with $i_2 \leq i_1$ and $j_2 \leq j_1$ which contains a ball labeled $\ell_2 - 1$. Let $\ell_3$ be the minimal label of a ball in position $(i_2,j_2)$. If $\ell_3 > 1$, there exists a ball labeled $\ell_3 -1$ in some position $(i_3,j_3)$ with $i_3 \leq i_2$ and $j_3 \leq j_2$. Continuing in this fashion, we obtain a zigzag $Z = \{ (i_1,j_1), \dots,(i_m,j_m) \}$ of positions which contain balls of labels $1,2,\dots,\ell$.

    An example may clarify these constructions. Suppose $k = 4, p = 6, \beta = (5,4,3,3,4,3)$, and
    \begin{footnotesize}
        \[ C = \begin{pmatrix} 0 & 0 & 0 & 0 & 0 & 0 \\ 0 & 1 & 2 & 0 & 1 & 0 \\ 0 & 0 & 0 & 2 & 1 & 2 \\ 0 & 3 & 0 & 1 & 1 & 1 \end{pmatrix}.\]
    \end{footnotesize}
    Observe the componentwise inequality $\col(C) = (0,4,2,3,3,3) \leq (5,4,3,3,4,3) = \beta$. Applying the matrix-ball construction to $C$ yields the following figure with western balls decorated with $W$.
    \begin{center}
    \begin{tikzpicture}[scale = 1.6]
            \draw[dashed] (-1,2) -- (5,2);
            \draw[dashed] (-1,1) -- (5,1);
            \draw[dashed] (-1,0) -- (5,0);

            \draw[dashed] (0,-1) -- (0,3);
            \draw[dashed] (1,-1) -- (1,3);
            \draw[dashed] (2,-1) -- (2,3);
            \draw[dashed] (3,-1) -- (3,3);
            \draw[dashed] (4,-1) -- (4,3);

            \node at (0.5,1.5) {${\begin{tiny}\circled{1}_W\end{tiny}}$};
            \node at (0.5,-0.5) {$\begin{tiny}\circled{3}_W\end{tiny}$};
            \node at (0.7,-0.8) {$\begin{tiny}\circled{4}_W\end{tiny}$};
            \node at (0.3,-0.2) {$\begin{tiny}\circled{2}_W\end{tiny}$};

            \node at (1.4,1.6) {${ \begin{tiny}\circled{2}\end{tiny}}$};
            \node at (1.6,1.4) {${ \begin{tiny}\circled{3}\end{tiny}}$};

            \node at (3.5,1.5) {$\begin{tiny}\circled{4}\end{tiny}$};

            \node at (2.3,0.65) {${\begin{tiny}\circled{4}\end{tiny}}$};
            \node at (2.65,0.35) {${\begin{tiny}\circled{5}_W\end{tiny}}$};

            \node at (2.5,-0.5) {$\begin{tiny}\circled{6}_W\end{tiny}$};
            \node at (3.5,0.5) {${ \begin{tiny}\circled{6}\end{tiny}}$};
            \node at (3.5,-0.5) {${\begin{tiny}\circled{7}_W\end{tiny}}$};

            \node at (4.3,0.65) {$\begin{tiny}\circled{7}\end{tiny}$};
            \node at (4.65,0.35) {$\begin{tiny}\circled{8}_W\end{tiny}$};

            \node at (4.5,-0.5) {$\begin{tiny}\circled{9}_W\end{tiny}$};
    \end{tikzpicture}
\end{center}
    By examining the columns occupied by western balls, we see that $(y_1,\dots,y_6) = (0,4,0,2,1,2)$.
    We have the componentwise difference $\beta - \col(C) = (5,0,1,0,1,0)$. The minimal value of $j$ for which the condition (2) is satisfied is $j = 5$. We have $\ell = y_1 + \cdots + y_j = y_1 + \cdots + y_5 = 0 + 4 + 0 + 2 + 1 = 7$. There is a unique ball labeled $\ell = 7$ in column $j = 5$; it is in matrix position $(i_1,j_1) = (4,5)$. One possible zigzag $Z = \{(2,2),(2,3),(3,4),(3,5),(4,5)\}$ constructed as in the above paragraph is shown in red below. (There is  another possible zigzag involving the $\begin{footnotesize}\circled{6}\end{footnotesize}$ in row 3.)
    \begin{center}
    \begin{tikzpicture}[scale = 1.6]
            \draw[dashed] (-1,2) -- (5,2);
            \draw[dashed] (-1,1) -- (5,1);
            \draw[dashed] (-1,0) -- (5,0);

            \draw[dashed] (0,-1) -- (0,3);
            \draw[dashed] (1,-1) -- (1,3);
            \draw[dashed] (2,-1) -- (2,3);
            \draw[dashed] (3,-1) -- (3,3);
            \draw[dashed] (4,-1) -- (4,3);

            \node at (0.5,1.5) {${\color{red} \begin{tiny}\circled{1}\end{tiny}}$};
            \node at (0.5,-0.5) {$\begin{tiny}\circled{3}\end{tiny}$};
            \node at (0.7,-0.7) {$\begin{tiny}\circled{4}\end{tiny}$};
            \node at (0.3,-0.3) {$\begin{tiny}\circled{2}\end{tiny}$};

            \node at (1.4,1.6) {${\color{red} \begin{tiny}\circled{2}\end{tiny}}$};
            \node at (1.6,1.4) {${\color{red} \begin{tiny}\circled{3}\end{tiny}}$};

            \node at (3.5,1.5) {$\begin{tiny}\circled{4}\end{tiny}$};

            \node at (2.4,0.6) {${\color{red} \begin{tiny}\circled{4}\end{tiny}}$};
            \node at (2.6,0.4) {${\color{red} \begin{tiny}\circled{5}\end{tiny}}$};

            \node at (2.5,-0.5) {$\begin{tiny}\circled{6}\end{tiny}$};
            \node at (3.5,0.5) {${\color{red} \begin{tiny}\circled{6}\end{tiny}}$};
            \node at (3.5,-0.5) {${\color{red} \begin{tiny}\circled{7}\end{tiny}}$};

            \node at (4.4,0.6) {$\begin{tiny}\circled{7}\end{tiny}$};
            \node at (4.6,0.4) {$\begin{tiny}\circled{8}\end{tiny}$};

            \node at (4.5,-0.5) {$\begin{tiny}\circled{9}\end{tiny}$};
    \end{tikzpicture}
\end{center}

    Define a new matrix $\overline{C} = (\bar{c}_{i,j}) \in \Mat_{k \times p}(\ZZ_{\geq 0})$ by the rule 
    \begin{equation}
        \bar{c}_{i,j} := \begin{cases}
            c_{i,j} & \text{if $(i,j) \in Z$,} \\
            0 & \text{otherwise.}
        \end{cases}
    \end{equation}
    Define $B := C - \overline{C}$ by matrix subtraction and define 
    \begin{equation} \bar{\alpha} := \alpha - \row(B) \quad \text{and} \quad \bar{\beta} := \beta -\col(B)\end{equation}
    where the subtractions are componentwise. In our example, we have
        \[ \begin{footnotesize} \overline{C} = \begin{pmatrix} 0 &  0 & 0 & 0 & 0 & 0 \\ 0 & 1 & 2 & 0 & 0 & 0 \\ 0 & 0 & 0 & 2 & 1 & 0 \\ 0 & 0 & 0  & 0 & 1 & 0 \end{pmatrix} \end{footnotesize} \quad \text{and} \quad
        \begin{footnotesize}
            B = C - \overline{C} = \begin{pmatrix} 0 & 0 & 0 & 0 & 0 & 0 \\ 0 & 0 & 0 & 0 & 1 & 0 \\ 0 & 0 & 0 & 0 & 0 & 2 \\ 0 & 3 & 0 & 1 & 0 & 1 \end{pmatrix}
        \end{footnotesize}\]
        so that $\overline{C}$ corresponds to the red balls and $B$ corresponds to the black balls. We have 
        \[ \bar{\beta} = \beta - \col(B) = (5,4,3,3,4,3) - (0,3,0,1,1,3) = (5,1,3,2,3,0).\]

    The matrix $\overline{C}$ is zigzag by construction. We have 
    \begin{equation}
        \sum_{q=1}^j \col(\overline{C})_q = \ell = \sum_{q=1}^{j} y_q > \sum_{q=1}^{j-1} (\beta_q - \col(C)_i) = \sum_{q=1}^{j-1} (\bar{\beta}_q - \col(\overline{C}))_q.
    \end{equation}
    Lemma~\ref{lem:zigzag-leading} guarantees the existence of a polynomial $\bar{f} \in I^\rowsum_{\bar{\beta}}$ such that  $\initial_\prec(\bar{f}) = \xxx^{\overline{C}}$. A glance at Observation~\ref{obs:ideal-sum} shows that $\bar{f} \in I_{\bar{\alpha},\bar{\beta}}$. Finally, Lemma~\ref{lem:ideal-maps} shows $f := \xxx^B \cdot \bar{f} \in I_{\alpha,\beta}$. Since 
    \begin{equation}
        \initial_\prec(f) = \initial_\prec(\xxx^B \cdot \bar{f}) = \xxx^B \cdot \initial_\prec(\bar{f}) = \xxx^B \cdot \xxx^{\overline{C}} = \xxx^C,
    \end{equation}
    we may use $f \in I_{\alpha,\beta}$ to obtain a congruence of the form \eqref{eq:goal-congruence}, which completes the proof. (If (2) held instead of (1), we would apply Lemma~\ref{lem:zigzag-leading-transpose} instead of Lemma~\ref{lem:zigzag-leading}.) 
\end{proof}

\subsection{Standard monomial basis and Hilbert series} We are ready to reap the harvest of the technical lemmata. We interpret the quotient $R_{\alpha,\beta}$ in terms of orbit harmonics and describe its diagonal standard monomial basis.

\begin{theorem}
    \label{thm:standard-monomial-basis}
    Let $n \geq 0$ and let $\alpha,\beta \models_0 n$ be weak compositions of $n$ of lengths $\ell(\alpha) = k$ and $\ell(\beta) = p$. We have the equality of ideals in $\FF[\xxx_{k \times p}]$:
    \begin{equation}
        I_{\alpha,\beta} = \gr \, \II(\CCC_{\alpha,\beta})
    \end{equation}
    and therefore the identification
    \begin{equation}
        R_{\alpha,\beta} = \RRR(\CCC_{\alpha,\beta})
    \end{equation}
    of orbit harmonics quotient rings. Furthermore, if $\prec$ is a diagonal term order on the monomials of $\FF[\xxx_{k \times p}]$, the standard monomial basis of $R_{\alpha,\beta}$ with respect to $\prec$ is 
    $\{ \mb(A) \,:\, A \in \CCC_{\alpha,\beta} \}$.
\end{theorem}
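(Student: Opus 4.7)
The plan is a two-sided dimension count: Lemma~\ref{lem:spanning-lemma} supplies the upper bound, while a short orbit-harmonics argument supplies the lower bound.

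First I would establish the ideal containment $I_{\alpha,\beta} \subseteq \gr\, \II(\CCC_{\alpha,\beta})$ by producing, for each generator of $I_{\alpha,\beta}$, a polynomial in $\II(\CCC_{\alpha,\beta})$ whose top-degree component recovers that generator. The row sum $x_{i,1} + \cdots + x_{i,p}$ is the top-degree part of $x_{i,1} + \cdots + x_{i,p} - \alpha_i$, which vanishes on every $A \in \CCC_{\alpha,\beta}$; column sums are identical. For a row monomial $x_{i,1}^{a_1} \cdots x_{i,p}^{a_p}$ with $a_1 + \cdots + a_p > \alpha_i$, I would use the falling-factorial polynomial
\[
\prod_{j=1}^p x_{i,j}(x_{i,j}-1) \cdots (x_{i,j}-a_j+1),
\]
whose top-degree part is $\prod_j x_{i,j}^{a_j}$ and which vanishes at any $A = (a_{i',j'}) \in \CCC_{\alpha,\beta}$: the evaluation is $\prod_j a_{i,j}(a_{i,j}-1)\cdots(a_{i,j}-a_j+1)$, and $\sum_j a_j > \sum_j a_{i,j} = \alpha_i$ forces some $a_j > a_{i,j}$, killing a factor. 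Column monomials are handled symmetrically. This containment gives a surjection $R_{\alpha,\beta} \twoheadrightarrow \RRR(\CCC_{\alpha,\beta})$, so
\[
\dim_\FF R_{\alpha,\beta} \;\geq\; \dim_\FF \RRR(\CCC_{\alpha,\beta}) \;=\; \#\CCC_{\alpha,\beta}.
\]

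Next I would invoke Lemma~\ref{lem:spanning-lemma}. Let $\BBB$ be the $\prec$-standard monomial basis of $R_{\alpha,\beta}$, so $\#\BBB = \dim_\FF R_{\alpha,\beta}$. Combining the above with the spanning lemma yields
\[
\#\CCC_{\alpha,\beta} \;\leq\; \dim_\FF R_{\alpha,\beta} \;=\; \#\BBB \;\leq\; \#\{\mb(A) : A \in \CCC_{\alpha,\beta}\} \;\leq\; \#\CCC_{\alpha,\beta},
\]
forcing equality throughout. In particular, the map $A \mapsto \mb(A)$ is injective on $\CCC_{\alpha,\beta}$, the surjection $R_{\alpha,\beta} \twoheadrightarrow \RRR(\CCC_{\alpha,\beta})$ is an isomorphism (so $I_{\alpha,\beta} = \gr\,\II(\CCC_{\alpha,\beta})$ and $R_{\alpha,\beta} = \RRR(\CCC_{\alpha,\beta})$), and $\BBB = \{\mb(A) : A \in \CCC_{\alpha,\beta}\}$.

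The main obstacle has already been dispatched by Lemma~\ref{lem:spanning-lemma}; the remaining verifications are routine. One pleasing side note is that the injectivity of $\mb$ on $\CCC_{\alpha,\beta}$ falls out for free from the dimension squeeze, but can also be proved directly: from $\mb(A) = \xxx^{A^{(2)}}$ together with $\alpha$ and $\beta$ one recovers the first-row contents of the RSK pair via $x_i = \alpha_i - \row(A^{(2)})_i$ and $y_j = \beta_j - \col(A^{(2)})_j$, and iterating the matrix-ball construction on $A^{(2)}$ reconstructs all of $(P,Q)$ and hence $A$.
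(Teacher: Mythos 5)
Your proposal is correct and follows essentially the same route as the paper: the same falling-factorial and linear witnesses give $I_{\alpha,\beta} \subseteq \gr\,\II(\CCC_{\alpha,\beta})$, and the same dimension squeeze combining the orbit-harmonics count $\dim_\FF \RRR(\CCC_{\alpha,\beta}) = \#\CCC_{\alpha,\beta}$ with Lemma~\ref{lem:spanning-lemma} forces all inequalities to be equalities. The only difference is cosmetic (your added remark on direct injectivity of $\mb$, which is fine but not needed).
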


\begin{proof}
    We establish the ideal containment
    \begin{equation}
    \label{eq:orbit-harmonics-ideal-containment}
        I_{\alpha,\beta} \subseteq \gr \, \II(\CCC_{\alpha,\beta})
    \end{equation}
    by checking that each generator of $I_{\alpha,\beta}$ is the highest degree component of some polynomial in $\II(\CCC_{\alpha,\beta})$. Indeed, if $1 \leq i \leq k$ we have $x_{i,1} + \cdots + x_{i,p} - \alpha_i \in \II(\CCC_{\alpha,\beta})$ so that the row sum generator $x_{i,1} + \cdots + x_{i,p}$ of $I_{\alpha,\beta}$ lies in $\gr \, \II(\CCC_{\alpha,\beta})$. Similarly, since $x_{1,j} + \cdots + x_{k,j} - \beta_j \in \II(\CCC_{\alpha,\beta})$ for $1 \leq j \leq p$, one has $x_{1,j} + \cdots + x_{k,j} \in \gr \, \II(\CCC_{\alpha,\beta})$. Suppose $1 \leq i \leq k$ and $x_{i,1}^{a_1} \cdots x_{i,p}^{a_p}$ is a monomial satisfying $a_1 + \cdots + a_p > \alpha_i$. It follows that 
    \begin{equation}
        \prod_{j=1}^p x_{i,j} (x_{i,j} - 1) \cdots (x_{i,j} - a_j + 1) \in \II(\CCC_{\alpha,\beta})
    \end{equation}
    which implies $x_{i,1}^{a_1} \cdots x_{i,p}^{a_p} \in \gr \, \II(\CCC_{\alpha,\beta}).$ If $1 \leq j \leq p$ and $x_{1,j}^{b_1} \cdots x_{k,j}^{b_k}$ satisfies $b_1 + \cdots + b_k > \beta_j$, one proves that $x_{1,j}^{b_1} \cdots x_{k,j}^{b_k} \in \gr \, \II(\CCC_{\alpha,\beta})$ in a similar way. This completes the proof of \eqref{eq:orbit-harmonics-ideal-containment}.

    Applying the orbit harmonics isomorphism \eqref{eq:orbit-harmonics-background}, the containment \eqref{eq:orbit-harmonics-ideal-containment}, and Lemma~\ref{lem:spanning-lemma} one has the chain of (in)equalities
    \begin{equation}
        \# \CCC_{\alpha,\beta} = \dim_\FF \RRR(\CCC_{\alpha,\beta}) = \dim_\FF \FF[\xxx_{k \times p}]/\gr \, \II(\CCC_{\alpha,\beta}) \leq \dim_\FF \FF[\xxx_{k \times p}]/I_{\alpha,\beta} \leq \# \CCC_{\alpha,\beta}.
    \end{equation}
    This forces the ideal containment \eqref{eq:orbit-harmonics-ideal-containment} to be an equality $I_{\alpha,\beta} = \gr \, \II(\CCC_{\alpha,\beta})$ so that $R_{\alpha,\beta} = \RRR(\CCC_{\alpha,\beta})$. The claim about standard monomial bases follows from Lemma~\ref{lem:spanning-lemma}.
\end{proof}

To give an example of Theorem~\ref{thm:standard-monomial-basis}, suppose $\alpha = (3,2)$ and $\beta = (2,2,1)$. We have 
\[ \CCC_{\alpha,\beta} = \left\{
\begin{footnotesize}\begin{pmatrix} 2 & 1 & 0 \\ 0 & 1 & 1 \end{pmatrix}, \, \, \begin{pmatrix} 2 & 0 & 1 \\ 0 & 2 & 0 \end{pmatrix}, \, \, \begin{pmatrix} 1 & 2 & 0 \\ 1 & 0 & 1 \end{pmatrix}, \, \, \begin{pmatrix} 0 & 2 & 1 \\ 2 & 0 & 0 \end{pmatrix}, \,\, \begin{pmatrix} 1 & 1 & 1 \\ 1 & 1 & 0 \end{pmatrix}\end{footnotesize}   \right\}.\]
The matrix-ball monomials $\mb(A)$ of these contingency tables are 
\[ \mb \begin{footnotesize} \begin{pmatrix} 2 & 1 & 0 \\ 0 & 1 & 1 \end{pmatrix}\end{footnotesize}  = 1, \quad \quad \mb \begin{footnotesize} \begin{pmatrix} 2 & 0 & 1 \\ 0 & 2 & 0 \end{pmatrix}\end{footnotesize} = x_{2,3}, \quad \quad \mb \begin{footnotesize} \begin{pmatrix} 1 & 2 & 0 \\ 1 & 0 & 1 \end{pmatrix}\end{footnotesize} = x_{2,2},\]
\[ \mb \begin{footnotesize} \begin{pmatrix} 0 & 2 & 1 \\ 2 & 0 & 0 \end{pmatrix}\end{footnotesize} = x_{2,2}^2, \quad \quad \mb \begin{footnotesize} \begin{pmatrix} 1 & 1 & 1 \\ 1 & 1 & 0 \end{pmatrix}\end{footnotesize} = x_{2,2} \cdot x_{2,3}.\]

Let $A \in \CCC_{\alpha,\beta}$ and suppose $A \mapsto (P,Q)$ under RSK. Let $\lambda \vdash n$ be the common shape of $P$ and $Q$.
By Theorem~\ref{thm:zigzag-characterization} and Equation~\eqref{eq:m-monomial-degree} we have \[\deg(\mb(A)) = n - \zigzag(A) = n - \lambda_1.\] The following formula for the Hilbert series of $R_{\alpha,\beta}$ is an immediate consequence of Theorem~\ref{thm:standard-monomial-basis}.

\begin{corollary}
    \label{cor:hilbert-series}
    Let $n \geq 0$ and let $\alpha, \beta \models_0 n$ be weak compositions of $n$. The Hilbert series of $R_{\alpha,\beta}$ is given in terms of Kostka numbers by 
    \begin{equation}
    \Hilb(R_{\alpha,\beta};q) = \sum_{A \in \CCC_{\alpha,\beta}} q^{n- \zigzag(A)} = \sum_{\lambda \vdash n} K_{\lambda,\alpha} \cdot K_{\lambda,\beta} \cdot q^{n - \lambda_1}.
    \end{equation}
\end{corollary}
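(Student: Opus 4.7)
The plan is to derive this Hilbert series formula as an immediate consequence of Theorem~\ref{thm:standard-monomial-basis} combined with the degree formula \eqref{eq:m-monomial-degree} and the RSK bijection. Since no new algebra is needed, the whole argument is essentially bookkeeping.

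First I would invoke Theorem~\ref{thm:standard-monomial-basis}, which asserts that $\{\mb(A) \,:\, A \in \CCC_{\alpha,\beta}\}$ is the standard monomial basis of $R_{\alpha,\beta}$ with respect to any diagonal term order $\prec$. Since a standard monomial basis descends to a graded $\FF$-vector space basis of the quotient, and since $\mb(A)$ is a monomial of well-defined degree, I obtain
\[
\Hilb(R_{\alpha,\beta};q) \;=\; \sum_{A \in \CCC_{\alpha,\beta}} q^{\deg(\mb(A))}.
\]
The first equality of the corollary then follows by substituting the degree formula \eqref{eq:m-monomial-degree}, namely $\deg(\mb(A)) = n - \zigzag(A)$. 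This formula in turn rests on Theorem~\ref{thm:zigzag-characterization}, which identifies $\zigzag(A)$ with $\lambda_1$ where $\lambda$ is the RSK shape.

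For the second equality, I would reindex the sum using the RSK bijection
\[
\CCC_{\alpha,\beta} \;\xrightarrow{\,\sim\,} \bigsqcup_{\lambda \vdash n} \SSYT(\lambda,\alpha) \times \SSYT(\lambda,\beta).
\]
Under this bijection, a contingency table $A$ corresponds to a pair $(P,Q)$ of SSYT of some common shape $\lambda \vdash n$, and by Theorem~\ref{thm:zigzag-characterization} the exponent $n - \zigzag(A) = n - \lambda_1$ depends only on $\lambda$. Since the fiber over $\lambda$ has cardinality $K_{\lambda,\alpha} \cdot K_{\lambda,\beta}$, grouping the sum by $\lambda$ yields
\[
\sum_{A \in \CCC_{\alpha,\beta}} q^{n - \zigzag(A)} \;=\; \sum_{\lambda \vdash n} K_{\lambda,\alpha} \cdot K_{\lambda,\beta} \cdot q^{n - \lambda_1},
\]
completing the proof. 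There is no genuine obstacle here since all the difficult work has been absorbed into Theorem~\ref{thm:standard-monomial-basis} (whose proof relied on the polarization machinery of Section~\ref{sec:One-row} and the spanning Lemma~\ref{lem:spanning-lemma}) and Theorem~\ref{thm:zigzag-characterization}; this corollary is purely an act of translation between algebraic and combinatorial languages.
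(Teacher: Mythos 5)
Your proposal is correct and matches the paper's own argument: the paper likewise deduces the corollary immediately from Theorem~\ref{thm:standard-monomial-basis}, using $\deg(\mb(A)) = n - \zigzag(A) = n - \lambda_1$ (via Equation~\eqref{eq:m-monomial-degree} and Theorem~\ref{thm:zigzag-characterization}) and grouping the sum over $\CCC_{\alpha,\beta}$ by RSK shape to get the Kostka-number expression. No differences worth noting.
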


In our example of $\alpha = (3,2)$ and $\beta = (2,2,1)$, the semistandard tableaux of content $\alpha$ are 
\begin{footnotesize}
\[ \begin{young} 1 & 1 & 1 \\ 2 & 2 \end{young}, \quad  \begin{young} 1 & 1 & 1 & 2 \\ 2 \end{young}, \quad \begin{young} 1 & 1 & 1 & 2 & 2 \end{young} \,,\]
\end{footnotesize}
while the semistandard tableaux of content $\beta$ are
\begin{footnotesize}
\[ \begin{young} 1 & 1 \\ 2 & 2 \\ 3 \end{young}, \quad \begin{young} 1 & 1 & 2 \\ 2 \\ 3 \end{young}, \quad \begin{young} 1 & 1 & 2 \\ 2 & 3 \end{young}, \quad \begin{young} 1 & 1 & 3 \\ 2 & 2 \end{young},\quad
 \begin{young} 1 & 1 & 2 & 2 \\ 3 \end{young}, \quad \begin{young} 1 & 1 & 2 & 3 \\ 2 \end{young}, \quad \begin{young} 1 & 1 & 2 & 2 & 3 \end{young}\, .\]
\end{footnotesize}
Corollary~\ref{cor:hilbert-series} says that 
\[
\Hilb(R_{\alpha,\beta};q) = \overbrace{0 \cdot 1 \cdot q^3}^{\lambda = (2,2,1)} + \overbrace{0 \cdot 1 \cdot q^2}^{\lambda = (3,1,1)} + \overbrace{1 \cdot 2 \cdot q^2}^{\lambda = (3,2)} + \overbrace{1 \cdot 2 \cdot q^1}^{\lambda = (4,1)} + \overbrace{1 \cdot 1 \cdot q^0}^{\lambda = (5)} = 2 q^2 + 2q + 1
\]
which is consistent with the standard monomials of $R_{\alpha,\beta}$ computed above.

If $\gamma = (\gamma_1,\dots,\gamma_m) \in (\ZZ_{\geq 0})^m$, we write $\Stab(\gamma) \subseteq \symm_m$ for the subgroup
\begin{equation}
    \Stab(\gamma) := \{ w \in \symm_m \,:\, \gamma_{w(i)} = \gamma_i \text{ for all } 1 \leq i \leq m \}.
\end{equation}
In particular, if $\alpha,\beta \models_0 n$ satisfy $\ell(\alpha)= k$ and $\ell(\beta) = p$, we have $\Stab(\alpha) \times \Stab(\beta) \subseteq \symm_k \times \symm_p$. As in the introduction, the product group $\symm_k \times \symm_p$ acts on the polynomial ring $\FF[\xxx_{k \times p}]$ by row and column permutation of its variable matrix. The ideal $I_{\alpha,\beta} \subseteq \FF[\xxx_{k \times p}]$ and therefore the quotient ring $R_{\alpha,\beta} = \FF[\xxx_{k \times p}]/I_{\alpha,\beta}$ are stable under the subgroup $\Stab(\alpha) \times \Stab(\beta)$. The product group $\symm_k \times \symm_p$ also acts on the space $\Mat_{k \times p}(\FF)$ of $k \times p$ matrices over $\FF$; the set $\CCC_{\alpha,\beta}$ of $\alpha,\beta$ contingency tables is stable under $\Stab(\alpha) \times \Stab(\beta)$. The following result follows from \eqref{eq:orbit-harmonics-background} and Theorem~\ref{thm:standard-monomial-basis}.

\begin{corollary}
    \label{cor:ungraded-module-structure}
    Let $n \geq 0$ and let $\alpha, \beta \models_0 n$ be weak compositions of $n$. We have an isomorphism of ungraded $\Stab(\alpha) \times \Stab(\beta)$-modules
    \[ R_{\alpha,\beta} \cong \FF[\CCC_{\alpha,\beta}].\]
\end{corollary}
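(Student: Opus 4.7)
The plan is to derive this corollary by combining Theorem~\ref{thm:standard-monomial-basis} with the general equivariant orbit harmonics isomorphism recorded in \eqref{eq:orbit-harmonics-background}. All the substantive work has already been done: Theorem~\ref{thm:standard-monomial-basis} identifies $R_{\alpha,\beta}$ with the orbit harmonics ring $\RRR(\CCC_{\alpha,\beta}) = \FF[\xxx_{k \times p}]/\gr\,\II(\CCC_{\alpha,\beta})$, so only a routine bookkeeping step remains.

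First I would check that the contingency table locus $\CCC_{\alpha,\beta} \subseteq \Mat_{k \times p}(\FF)$ is stable under the action of $\Stab(\alpha) \times \Stab(\beta)$. Indeed, if $(u,v) \in \Stab(\alpha) \times \Stab(\beta)$ sends a matrix $A = (a_{i,j})$ to $A' = (a_{u^{-1}(i),v^{-1}(j)})$, then $\row(A')_i = \row(A)_{u^{-1}(i)} = \alpha_{u^{-1}(i)} = \alpha_i$ since $u$ fixes the sequence $\alpha$, and analogously $\col(A')_j = \beta_j$; hence $A' \in \CCC_{\alpha,\beta}$. By the same token, the ideal $\II(\CCC_{\alpha,\beta})$ is $(\Stab(\alpha) \times \Stab(\beta))$-stable, and therefore so is its associated graded ideal $\gr\,\II(\CCC_{\alpha,\beta})$.

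With stability in hand, the equivariant version of \eqref{eq:orbit-harmonics-background} applies to the group $G = \Stab(\alpha) \times \Stab(\beta) \subseteq GL_{kp}(\FF)$ acting on $\Mat_{k \times p}(\FF) = \FF^{kp}$, giving an isomorphism of ungraded $G$-modules
\[
\FF[\CCC_{\alpha,\beta}] \,\cong\, \FF[\xxx_{k \times p}]/\II(\CCC_{\alpha,\beta}) \,\cong\, \RRR(\CCC_{\alpha,\beta}).
\]
Combining with the identification $R_{\alpha,\beta} = \RRR(\CCC_{\alpha,\beta})$ from Theorem~\ref{thm:standard-monomial-basis} yields the desired isomorphism $R_{\alpha,\beta} \cong \FF[\CCC_{\alpha,\beta}]$ of ungraded $\Stab(\alpha) \times \Stab(\beta)$-modules. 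There is no real obstacle in this step; the genuine difficulty was packaged into Theorem~\ref{thm:standard-monomial-basis} via the polarization and Lefschetz arguments of the preceding sections.
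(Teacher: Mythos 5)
Your proposal is correct and follows exactly the paper's route: the paper also deduces this corollary by combining the equivariant orbit harmonics isomorphism \eqref{eq:orbit-harmonics-background} with the identification $R_{\alpha,\beta} = \RRR(\CCC_{\alpha,\beta})$ from Theorem~\ref{thm:standard-monomial-basis}, after noting that $\CCC_{\alpha,\beta}$ is stable under $\Stab(\alpha) \times \Stab(\beta)$. Your explicit verification of that stability is a welcome but routine addition.
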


\section{Graded Module structure}
\label{sec:Module}

The group of combinatorial symmetries $\Stab(\alpha) \times \Stab(\beta)$ of the weak compositions $\alpha$ and $\beta$ acts on the set $\CCC_{\alpha,\beta}$ of contingency tables by row and column permutation. The orbit harmonics ring $\RRR(\CCC_{\alpha,\beta})$ gives a graded refinement of this action. In this section we give a technique for computing this graded structure, thus giving a graded refinement of Corollary~\ref{cor:ungraded-module-structure}. Without loss of generality, we restrict to the case where $\alpha$ and $\beta$ are partitions.

\subsection{Permutation group embeddings associated to partitions} Let $\mu = (\mu_1 \geq \cdots \geq \mu_k) \vdash n$ be a partition with $k$ parts. We associate a group embedding 
\begin{equation}
    \iota_\mu: \Stab(\mu) \hookrightarrow \symm_n
\end{equation}
to $\mu$ as follows. The subgroup $\Stab(\mu) \subseteq \symm_k$ is generated by transpositions $(i,i+1)$ for which $\mu_i = \mu_{i+1}$. It $(i,i+1)$ is such a transposition, we define its image $\iota_\mu(i,i+1) \in \symm_n$ by the product of 2-cycles
\begin{equation}
    \iota_\mu: (i,i+1) \mapsto (\mu_1 + \cdots + \mu_{i-1}+1, \mu_1 + \cdots + \mu_i + 1) \cdots (\mu_1 + \cdots + \mu_{i-1} + \mu_i, \mu_1 + \cdots + \mu_i + \mu_{i+1}).
\end{equation}
For example, if $\mu = (3,2,2,2,1,1) \vdash 11$, we have $\Stab(\mu) = \langle (2,3), (3,4), (5,6) \rangle \subseteq \symm_6$ and
\[ \iota_\mu: (2,3) \mapsto (4,6)(5,7), \, \quad \iota_\mu: (3,4) \mapsto (6,8)(7,9), \quad \quad \iota_\mu: (5,6) \mapsto (10,11).\]
It is not hard to see that $\iota_\mu$ is an injective group homomorphism.

Let $\eta_\mu \in \FF[\symm_n]$ be the group algebra element
\begin{equation}
    \eta_\mu := \sum_{w \in \symm_\mu} w \in \FF[\symm_n].
\end{equation}
which symmetrizes over the parabolic subgroup $\symm_\mu$. If $V$ is any $\FF[\symm_n]$-module, the $\FF$-linear subspace $V^{\symm_\mu} := \{ v \in V \,:\, w \cdot v = v \text{ for all } w \in \symm_\mu \}$ is given by
\begin{equation}
    V^{\symm_{\mu}} = \eta_\mu V := \{ \eta_\mu \cdot v \,:\, v \in V \}.
\end{equation}
For any $u \in \Stab(\mu) \subseteq \symm_k$, one has $\iota_\mu(u) \eta_\mu = \eta_\mu \iota_\mu(u)$ within the group algebra $\FF[\symm_n]$. This endows $V^{\symm_\mu} = \eta_\mu V$ with the structure of a  $\Stab(\mu)$-module via
\begin{equation}
    u \cdot v := \iota_\mu(u) \cdot v \quad \quad \text{for all $u \in \Stab(\mu)$ and $v \in V^{\symm_\mu}$.}
\end{equation}

\subsection{Graded action on contingency tables}
If $\mu,\nu \vdash n$ are two partitions, the product group $\Stab(\mu) \times \Stab(\nu)$ acts on the locus of contingency tables $\CCC_{\mu,\nu}$ by row and column permutation. This induces a graded $\Stab(\mu) \times \Stab(\nu)$-module structure on $\RRR(\CCC_{\mu,\nu}) = \FF[\xxx_{k \times p}]/\gr \, \II(\CCC_{\mu,\nu})$ where $\ell(\mu) = k$ and $\ell(\nu) = p$. This graded module structure is given as follows. If $\lambda \vdash n$ is a partition, we write $V^\lambda$ for the associated irreducible $\symm_n$-module.

\begin{theorem}
    \label{thm:graded-module-structure}
    Let $\mu,\nu \vdash n$ be two partitions and consider a degree $d \geq 0$. We have the isomorphism of $\Stab(\mu) \times \Stab(\nu)$-modules
    \[
    (R_{\mu,\nu})_d = \RRR(\CCC_{\mu,\nu})_d \cong \bigoplus_{\substack{\lambda \vdash n \\ \lambda_1 = n-d }} \eta_\mu V^\lambda \otimes \eta_\nu V^\lambda = 
    \bigoplus_{\substack{\lambda \vdash n \\ \lambda_1 = n-d }} (V^\lambda)^{\symm_\mu} \otimes (V^\lambda)^{\symm_\nu}.
    \]
\end{theorem}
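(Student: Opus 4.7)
The plan is to reduce to the permutation-matrix base case $\RRR(\symm_n) = R_{(1^n),(1^n)}$ by realizing $R_{\mu,\nu}$ as the $\symm_\mu \times \symm_\nu$-invariants of $\RRR(\symm_n)$. Let $B_i \subseteq [n]$ and $C_j \subseteq [n]$ denote the $i$-th $\mu$-block and $j$-th $\nu$-block of $[n]$. Define the graded $\FF$-algebra map
\[\phi: \FF[\xxx_{k \times p}] \to \FF[\xxx_{n \times n}], \quad \phi(x_{i,j}) := \sum_{a \in B_i,\, b \in C_j} y_{a,b},\]
whose image lies in $\FF[\xxx_{n \times n}]^{\symm_\mu \times \symm_\nu}$ and which is $\Stab(\mu) \times \Stab(\nu)$-equivariant through the embeddings $\iota_\mu, \iota_\nu$.

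The key technical step is verifying $\phi(I_{\mu,\nu}) \subseteq \gr \II(\symm_n)$, so that $\phi$ descends to a graded, equivariant map $\bar\phi: R_{\mu,\nu} \to \RRR(\symm_n)^{\symm_\mu \times \symm_\nu}$. Row/column sum generators map to the top-degree components of $\sum_{a \in B_i,b} y_{a,b} - \mu_i$ and $\sum_{a, b \in C_j} y_{a,b} - \nu_j$, both of which vanish on permutation matrices and hence lie in $\II(\symm_n)$. For a row-monomial generator $x_{i,1}^{a_1} \cdots x_{i,p}^{a_p}$ with $\sum_j a_j > \mu_i = |B_i|$, expanding $\phi$ in the $y$-variables yields monomials of total $y$-degree exceeding $|B_i|$ supported on rows $a \in B_i$; pigeonhole forces some $a \in B_i$ to have $\sum_b \deg_{y_{a,b}} \geq 2$, placing each monomial in $\gr \II(\symm_n) = I_{(1^n),(1^n)}$ (using Theorem~\ref{thm:standard-monomial-basis} in the permutation case). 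Column-monomial generators are handled symmetrically.

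To conclude, observe that under the Lagrange identifications $R_{\mu,\nu} \cong \FF[\CCC_{\mu,\nu}]$ and $\RRR(\symm_n) \cong \FF[\symm_n]$ from orbit harmonics, $\bar\phi$ corresponds to the pullback $\FF[\CCC_{\mu,\nu}] \to \FF[\symm_n]^{\symm_\mu \times \symm_\nu}$ along the block-sum surjection $\symm_n \twoheadrightarrow \CCC_{\mu,\nu}$ that sends a permutation matrix to its contingency table of block sums. Since the fibers of this surjection are precisely the $\symm_\mu \times \symm_\nu$-orbits on $\symm_n$, the pullback is a $\Stab(\mu) \times \Stab(\nu)$-equivariant bijection, so $\bar\phi$ is an ungraded isomorphism; being graded, it is a graded isomorphism. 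Applying the Rhoades base case $\RRR(\symm_n)_d \cong \bigoplus_{\lambda_1 = n-d} V^\lambda \otimes V^\lambda$ as graded $\symm_n \times \symm_n$-modules, taking $\symm_\mu \times \symm_\nu$-invariants (which commutes with gradings and direct sums), and using that $\iota_\mu(\Stab(\mu))$ normalizes $\symm_\mu \subseteq \symm_n$ so that $(V^\lambda)^{\symm_\mu}$ inherits a $\Stab(\mu)$-action, yields the claimed decomposition.

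The main obstacle is the pigeonhole verification for the row and column monomial generators of $I_{\mu,\nu}$; the block structure must be tracked carefully. Once this step is in place, the rest follows transparently from the orbit harmonics framework, the double-coset identification of $\CCC_{\mu,\nu}$ with $\symm_\mu \backslash \symm_n / \symm_\nu$, and the permutation-matrix base case.
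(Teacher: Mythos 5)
Your setup matches the paper's: the block-sum map $\phi$ is exactly the paper's $\tilde{\sigma}_{\mu,\nu}$, the pigeonhole verification that $\phi(I_{\mu,\nu}) \subseteq I_n = \gr\,\II(\symm_n)$ is the paper's check, and the final step (apply the graded $\symm_n \times \symm_n$-structure of $R_n$ from \cite{Rhoades} and take $\symm_\mu \times \symm_\nu$-invariants, with $\Stab(\mu) \times \Stab(\nu)$ acting through $\iota_\mu, \iota_\nu$) is also how the paper concludes. The problem is your middle step, where you claim that $\bar\phi$ is an isomorphism.

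The assertion that ``under the Lagrange identifications $R_{\mu,\nu} \cong \FF[\CCC_{\mu,\nu}]$ and $\RRR(\symm_n) \cong \FF[\symm_n]$, $\bar\phi$ corresponds to the pullback along the block-sum surjection'' is not justified, and this is a genuine gap. The orbit harmonics isomorphism $\FF[\xxx_N]/\II(\Zpoints) \cong \FF[\xxx_N]/\gr\,\II(\Zpoints)$ is a non-canonical vector space isomorphism; it is not natural with respect to ring maps, so one cannot transport bijectivity of the filtered-level pullback $\FF[\CCC_{\mu,\nu}] \xrightarrow{\sim} \FF[\symm_n]^{\symm_\mu \times \symm_\nu}$ to the associated graded map $\bar\phi: R_{\mu,\nu} \to (\RRR(\symm_n))^{\symm_\mu \times \symm_\nu}$. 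What is true is that $\tilde\sigma_{\mu,\nu}$ induces a filtered map $\FF[\xxx_{k\times p}]/\II(\CCC_{\mu,\nu}) \to \FF[\yyy_{n\times n}]/\II(\symm_n)$ which is injective with image the invariants, and $\bar\phi$ is its associated graded; but $\gr$ of a bijective (or injective, or surjective) filtered map need not be bijective (or injective, or surjective) unless the map is \emph{strictly} compatible with the filtrations, i.e.\ $\sigma(F_d) = \image(\sigma) \cap F_d$ for all $d$. Establishing that strictness is essentially equivalent to the statement you are trying to prove, so nothing has been gained. The paper closes exactly this gap with its one substantive argument: it proves directly that $\sigma_{\mu,\nu}: R_{\mu,\nu} \to (R_n)^{\symm_\mu \times \symm_\nu}$ is \emph{surjective} by showing the invariant algebra is generated by the cosets of the block sums $\tilde\sigma_{\mu,\nu}(x_{i,j})$ (the chain $(\heartsuit) \Rightarrow (\clubsuit) \Rightarrow (\spadesuit) \Leftrightarrow (\diamondsuit)$: pass to the smaller ideal $J_n$ of products of two variables in a common row or column, factor the invariant ring along the $\mu,\nu$-block decomposition, and symmetrize monomials within each block), and then concludes by the dimension count $\dim_\FF R_{\mu,\nu} = \#\CCC_{\mu,\nu} = \dim_\FF (R_n)^{\symm_\mu \times \symm_\nu}$ coming from Theorem~\ref{thm:standard-monomial-basis} and the double-coset bijection. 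Your proof needs either this generation/surjectivity argument or some other proof of injectivity or surjectivity of $\bar\phi$ (after which a graded dimension count via Corollary~\ref{cor:hilbert-series} would also suffice); as written, neither is supplied.
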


\begin{proof}
    The equality $(R_{\mu,\nu})_d = \RRR(\CCC_{\mu,\nu})_d$ holds by Theorem~\ref{thm:standard-monomial-basis}. The $\Stab(\mu) \times \Stab(\nu)$-module structure on the direct summand $\eta_\mu V^\lambda \otimes \eta_\nu V^\lambda$ is 
    \[ (u_1,u_2) \cdot (v_1 \otimes v_2) := (u_1 \cdot v_1) \otimes (u_2 \cdot v_2) \quad \text{for $u_1 \in \Stab(\mu), \, u_2 \in \Stab(\nu), \, v_1 \in \eta_\mu V^\lambda, \, v_2 \in \eta_\nu V^\lambda.$}\]
    This is a well-defined action by the discussion in the previous subsection.

    We write $\symm_n := \CCC_{(1^n),(1^n)} \subseteq \Mat_{n \times n}(\FF)$ for the locus of $n \times n$ permutation matrices and $R_n := R_{(1^n),(1^n)} = \RRR(\symm_n)$ for its graded orbit harmonics quotient ring. We write $I_n = I_{(1^n),(1^n)} := \gr \, \II(\symm_n)$ for the defining ideal of $R_n$. To hopefully reduce confusion, we use $x$-variables for the quotient $R_{\mu,\nu}$ and $y$-variables for the quotient $R_n$, so that 
    \begin{equation}
        R_{\mu,\nu} = \RRR(\CCC_{\mu,\nu}) = \FF[\xxx_{k \times p}]/I_{\mu,\nu} = \FF[\xxx_{k \times p}]/\gr \, \II(\CCC_{\mu,\nu})
    \end{equation}
    and
    \begin{equation}
        R_n = \RRR(\symm_n) = \FF[\yyy_{n \times n}]/I_n = \FF[\yyy_{n \times n}]/\gr \, \II(\symm_n).
    \end{equation}
    We use Theorem~\ref{thm:standard-monomial-basis} together with the graded $\symm_n \times \symm_n$-structure of $R_n$ calculated by the second author \cite{Rhoades} to determine the graded $\Stab(\mu) \times \Stab(\nu)$-structure of $R_{\mu,\nu}$.

    Theorem~\ref{thm:standard-monomial-basis} yields an ungraded isomorphism of $\symm_n \times \symm_n$-modules
    \begin{equation}
    \label{eq:module-identification-one}
        R_n \cong_{\symm_n \times \symm_n} \FF[\symm_n]
    \end{equation}
    where $\symm_n \times \symm_n$ acts on the group algebra $\FF[\symm_n]$ by left and right multiplication. The set 
    \begin{equation}
    \symm_\mu \backslash \symm_n / \symm_\nu := \{ \symm_{\mu} \cdot w \cdot \symm_\nu \,:\, w \in \symm_n \}
    \end{equation}of double cosets is in bijection with the family $\CCC_{\mu,\nu}$ of contingency tables. If we take $\symm_\mu \times \symm_\nu$-invariants on both sides of Equation~\eqref{eq:module-identification-one}, we obtain 
    \begin{equation}
        \label{eq:module-identification-two}
        (\eta_\mu,\eta_\nu) \cdot R_n = (R_n)^{\symm_\mu \times \symm_\nu} \cong_\FF \FF[\CCC_{\mu,\nu}] 
    \end{equation}
    as $\FF$-vector spaces.

    We define a homomorphism $\tilde{\sigma}_{\mu,\nu}: \FF[\xxx_{k \times p}] \longrightarrow \FF[\yyy_{n \times n}]$ of $\FF$-algebras by the rule
    \begin{equation}
    \label{eq:tilde-sigma-definition}
        \tilde{\sigma}_{\mu,\nu}(x_{i,j}) := \sum_{a \, = \, \mu_1 + \cdots + \mu_{i-1}+1}^{\mu_1 + \cdots + \mu_{i-1} + \mu_1}  \left(\sum_{b  \, = \, \nu_1 + \cdots + \nu_{i-1} + 1}^{\nu_1 + \cdots + \nu_{i-1} + \nu_i} y_{a,b} \right).
    \end{equation}
    In other words, if we partition the $n \times n$ matrix of variables $\yyy_{n \times n}$ into block submatrices with row blocks of sizes $\mu_1,\dots,\mu_k$ and column blocks of sizes $\nu_1,\dots,\nu_p$, then $\tilde{\sigma}_{\mu,\nu}(x_{i,j})$ is the sum of the $y$-variables appearing in the $(i,j)$-block. For example, if $\mu = (2,2,1)$ and $\nu = (3,2)$ we have the block decomposition 
    \[ \yyy_{5 \times 5} = 
    \left(\begin{array}{c c c | c c }
     y_{1,1} & y_{1,2} & y_{1,3} & y_{1,4} & y_{1,5} \\
     y_{2,1} & y_{2,2} & y_{2,3} & y_{2,4} & y_{2,5} \\
     \hline
     y_{3,1} & y_{3,2} & y_{3,3} & y_{3,4} & y_{3,5} \\
     y_{4,1} & y_{4,2} & y_{4,3} & y_{4,4} & y_{4,5} \\ 
     \hline
     y_{5,1} & y_{5,2} & y_{5,3} & y_{5,4} & y_{5,5}
    \end{array}\right)
    \]
    and the map $\tilde{\sigma}_{\mu,\nu}: \FF[\xxx_{3\times2}] \to \FF[\yyy_{5 \times 5}]$ is defined by
    \[ \tilde{\sigma}_{\mu,\nu}: x_{1,1} \mapsto y_{1,1} + y_{1,2} + y_{1,3} + y_{2,1} + y_{2,2} + y_{2,3}, \quad \quad \tilde{\sigma}_{\mu,\nu}: x_{1,2} \mapsto y_{1,4} + y_{1,5} + y_{2,4} + y_{2,5}, \]
    \[ \tilde{\sigma}_{\mu,\nu}: x_{2,1} \mapsto y_{3,1} + y_{3,2} + y_{3,3} + y_{4,1} + y_{4,2} + y_{4,3}, \quad \quad \tilde{\sigma}_{\mu,\nu}: x_{2,2} \mapsto y_{3,4} + y_{3,5} + y_{4,4} + y_{4,5},\]
    \[ \tilde{\sigma}_{\mu,\nu}: x_{3,1} \mapsto y_{5,1} + y_{5,2} + y_{5,3}, \quad \quad \tilde{\sigma}_{\mu,\nu}: x_{3,2} \mapsto y_{5,4} + y_{5,5}.\]
    It is easily seen that 
    \begin{equation}
    \label{eq:module-identification-three}
        \image ( \tilde{\sigma}_{\mu,\nu}) \subseteq \FF[\yyy_{n \times n}]^{\symm_{\mu} \times \symm_\nu}.
    \end{equation}
    This containment is strict in most cases. Furthermore, we claim
    \begin{equation}
        \label{eq:module-identification-four}
        \tilde{\sigma}_{\mu,\nu}(I_{\mu,\nu}) \subseteq I_n.
    \end{equation}
    The containment \eqref{eq:module-identification-four} may be checked on the generators of $I_{\mu,\nu}$. For the row sum and column sum generators this is immediate from \eqref{eq:tilde-sigma-definition}. If a row product $x_{i,1}^{a_1} \cdots x_{i,p}^{a_p}$ satisfies $a_1 + \cdots + a_p > \alpha_i$, we have $\tilde{\sigma}_{\mu,\nu}(x_{i,1}^{a_1} \cdots x_{i,p}^{a_p}) \in I_n$ because every term in the expansion of 
    \[ \tilde{\sigma}_{\mu,\nu}(x_{i,1}^{a_1} \cdots x_{i,p}^{a_p}) = \tilde{\sigma}_{\mu,\nu}(x_{i,1})^{a_1} \cdots \tilde{\sigma}_{\mu,\nu}(x_{i,p})^{a_p}\] 
    is a product of $a_1 + \cdots + a_p$ $y$-variables living in $\alpha_i$ rows of $\yyy_{n \times n}$, and any such product will contain two variables in the same row by the Pigeonhole Principle.  
    A similar argument shows $\tilde{\sigma}_{\mu,\nu}(x_{i,j}^{b_1} \cdots x_{k,j}^{b_k}) \in I_n$ when $b_1 + \cdots + b_k > \beta_j$.

    The containments \eqref{eq:module-identification-three} and \eqref{eq:module-identification-four} imply that  $\tilde{\sigma}_{\mu,\nu}: \FF[\xxx_{k \times p}] \to \FF[\yyy_{n \times n}]$ induces an $\FF$-algebra homomorphism
    \begin{equation}
        \label{eq:module-identification-five}
        \sigma_{\mu,\nu}: R_{\mu,\nu} \longrightarrow (R_n)^{\symm_{\mu} \times \symm_{\nu}}.
    \end{equation}
    Although the containment \eqref{eq:module-identification-three} is usually strict so that $\tilde{\sigma}_{\mu,\nu}$ is usually not a surjection, we will see that $\sigma_{\mu,\nu}$ is an isomorphism. We prove the following weaker claim first.

    \begin{quote}
        $(\diamondsuit)$ The $\FF$-algebra homomorphism $\sigma_{\mu,\nu}: R_{\mu,\nu} \to (R_n)^{\symm_{\mu} \times \symm_{\nu}}$ is an epimorphism.
    \end{quote}
    
    The proof of $(\diamondsuit)$ proceeds by careful analysis of the quotient ring $(R_n)^{\symm_{\mu} \times \symm_{\nu}}$. We first rephrase $(\diamondsuit)$ as a statement involving generation of the $\FF$-algebra $(R_n)^{\symm_{\mu} \times \symm_{\nu}}$.

    \begin{quote}
        $(\spadesuit)$ The collection of cosets $\{ \tilde{\sigma}_{\mu,\nu}(x_{i,j}) + I_n \,:\, 1 \leq i \leq k, \, 1 \leq j \leq p \}$ generates the $\FF$-algebra $(\FF[\yyy_{n \times n}]/I_n)^{\symm_\mu \times \symm_{\nu}}$.
    \end{quote}

    It follows from the definitions that $(\diamondsuit) \Leftrightarrow (\spadesuit)$. We prove $(\spadesuit)$ by establishing a stronger and more convenient statement.
    
    Let $J_n \subseteq I_n$ be the smaller ideal generated by all products of two $y$-variables in the same row or column, i.e.
    \begin{equation}
        J_n := ( y_{i,j} \cdot y_{i,j'}, \, y_{i,j} \cdot y_{i',j} \,: \, 1 \leq i,i' \leq n, \, 1 \leq j,j' \leq n ).
    \end{equation}
    (The row sums and column sums which appear as generators of $I_n$ do not appear as generators of $J_n$.) 
    We will prove the following statement.

    \begin{quote}
        $(\clubsuit)$ The collection of cosets $\{ \tilde{\sigma}_{\mu,\nu}(x_{i,j}) + J_n \,:\, 1 \leq i \leq k, \, 1 \leq j \leq p \}$ generates the $\FF$-algebra $(\FF[\yyy_{n \times n}]/J_n)^{\symm_\mu \times \symm_{\nu}}.$
    \end{quote}

    Since $J_n \subseteq I_n$, we have $(\clubsuit) \Rightarrow (\spadesuit)$. Our proof of $(\clubsuit)$ begins by factoring the invariant ring $\FF[\yyy_{n \times n}]^{\symm_\mu \times \symm_\nu}$ along the block matrix decomposition of $\yyy_{n \times n}$ determined by $\mu$ and $\nu$. More precisely, we have the tensor product decomposition
    \begin{equation}
    \label{eq:tensor-product-identification}
        \FF[\yyy_{n \times n}]^{\symm_\mu \times \symm_\nu} = \bigotimes_{i=1}^k \bigotimes_{j=1}^p \FF\left[ y_{a,b} \,:\,  \begin{array}{c} \mu_1 + \cdots + \mu_{i-1} < a \leq \mu_1 + \cdots  + \mu_i \\ \nu_1 + \cdots + \nu_{i-1} < b \leq \nu_1 + \cdots + \nu_i \end{array} \right]^{\symm_{\mu_i} \times \symm_{\nu_j}}.
    \end{equation}
    Observe that $\tilde{\sigma}_{\mu,\nu}(x_{i,j})$ is the sum of variables involved in the $(i,j)$ factor of the decomposition~\eqref{eq:tensor-product-identification}. We prove generation modulo $J_n$ in each individual factor as follows.

    Let $r,s > 0$, let $\zzz_{r \times s} = (z_{i,j})$ be an $r \times s$ matrix of variables, and let $\FF[\zzz_{r \times s}]$ be the polynomial ring over these variables. The product group $\symm_r \times \symm_s$ acts on the rows and columns of the matrix $\zzz_{r \times s}$, and this induces an action on $\FF[\zzz_{r \times s}]$.  Write $J_{r,s} \subseteq \FF[\zzz_{r \times s}]$ for the ideal
    \begin{equation}
        J_{r,s} := ( z_{i,j} \cdot z_{i,j'}, \, z_{i,j} \cdot z_{i',j} \,:\, 1 \leq i,i' \leq r, \, \, 1 \leq j,j' \leq s)
    \end{equation}
    and observe that $J_{r,s}$ is $\symm_r \times \symm_s$-stable. 
    Let $\Sigma \in \FF[\zzz_{r \times s}]$ be $\Sigma := \sum_{i=1}^r \sum_{j=1}^s z_{i,j}.$  We establish the following.

    \begin{quote}
          ($\heartsuit$)
        The coset $\Sigma + J_{r,s}$ generates the $\FF$-algebra $(\FF[\zzz_{r \times s}] /J_{r,s})^{\symm_r \times \symm_s}$.
    \end{quote}

    The heart of the surjectivity of $\sigma_{\mu,\nu}$ is established as follows.
    The $\FF$-algebra $\FF[\zzz_{r \times s}]^{\symm_r \times \symm_s}$ is spanned over $\FF$ by the set 
    \[ \left\{ \sum_{(w_1,w_2) \, \in \, \symm_r \times \symm_s} (w_1,w_2) \cdot \zzz^A \,:\, A \in \Mat_{r \times s}(\ZZ_{\geq 0})\right\}.\]
    If $A$ has a row sum or column sum $> 1$, we have $\zzz^A \in J_{r,s}$ and therefore one has the membership $\sum_{(w_1,w_2) \, \in \, \symm_r \times \symm_s} (w_1,w_2) \cdot \zzz^A \in J_{r,s}$. If every row sum and column sum of $A$ is $\leq 1$ and $\sum_{i,j} a_{i,j} = m$, one checks that 
    \begin{equation}
         \sum_{(w_1,w_2) \, \in \, \symm_r \times \symm_s} (w_1,w_2) \cdot \zzz^A \equiv \gamma \cdot \Sigma^m \mod J_{r,s}
    \end{equation}
    where $\gamma \in \FF$ is a nonzero constant.\footnote{It can be shown that $\gamma = (k-m)! \cdot (p-m)!$.}
    This establishes the statement $(\heartsuit)$. We have
    \[ (\heartsuit) \Rightarrow (\spadesuit) \Rightarrow (\clubsuit) \Leftrightarrow (\diamondsuit)\]
    where the implication $(\heartsuit) \Rightarrow (\spadesuit)$ follows from the decomposition~\eqref{eq:tensor-product-identification}.
    We conclude that $(\diamondsuit)$ holds and the $\FF$-algebra map $\sigma_{\mu,\nu}$ of \eqref{eq:module-identification-five} is a surjection.

    Theorem~\ref{thm:standard-monomial-basis} implies that $\dim_\FF R_{\mu,\nu} = \# \CCC_{\mu,\nu}$.  By \eqref{eq:module-identification-two} we also have $\dim_\FF (R_n)^{\symm_\mu \times \symm_\nu} = \# \CCC_{\mu,\nu}$. This forces the epimorphism $\sigma_{\mu,\nu}: R_{\mu,\nu} \twoheadrightarrow (R_n)^{\symm_\mu \times \symm_\nu}$ of $\FF$-algebras to be an isomorphism. It follows from \eqref{eq:tilde-sigma-definition} that the isomorphism $\sigma_{\mu,\nu}: R_{\mu,\nu} \xrightarrow{ \, \sim \, } (R_n)^{\symm_\mu \times \symm_\nu}$ is $\Stab(\mu) \times \Stab(\nu)$-equivariant and preserves degrees. We may restrict to degree $d$ to get an isomorphism
    \begin{equation}
        \label{eq:module-identification-six}
        (R_{\mu,\nu})_d \cong_{\Stab(\mu) \times \Stab(\nu)} (R_n)^{\symm_{\mu} \times \symm_\nu}_d = (\eta_\mu,\eta_\nu) \cdot (R_n)_d
    \end{equation}
    of $\Stab(\mu) \times \Stab(\nu)$-modules. According to \cite[Thm. 4.2]{Rhoades}, we have
    \begin{equation}
        \label{eq:module-identification-seven}
        (R_n)_d \cong_{\symm_n} \bigoplus_{\substack{\lambda \vdash n \\ \lambda_1 = n-d}} V^\lambda \otimes V^\lambda.
    \end{equation}
    Combining \eqref{eq:module-identification-six} and \eqref{eq:module-identification-seven} proves the theorem.
\end{proof}

\subsection{Symmetric functions} Theorem~\ref{thm:graded-module-structure} gives an expression for the graded $\Stab(\mu) \times \Stab(\nu)$-structure of $R_{\mu,\nu}$. In order to use this result to calculate the graded decomposition of $R_{\mu,\nu}$ into $\Stab(\mu) \times \Stab(\nu)$-irreducibles, we need the $\Stab(\mu)$-isotypic decomposition of $\eta_\mu V^\lambda$ (and the $\Stab(\nu)$-isotypic decomposition of $\eta_\nu V^\lambda$). In this subsection we describe a method for calculating this decomposition.

Let $\mu = (\mu_1,\dots,\mu_k) \vdash n$ be a partition with $k$ parts, so that $\Stab(\mu) \subseteq \symm_k$ is a parabolic subgroup. Write the parabolic factors of $\Stab(\mu)$ as 
\begin{equation}
    \Stab(\mu) = \symm_{\mult(\mu)_1} \times \cdots \times \symm_{\mult(\mu)_r}
\end{equation}
where $\mult(\mu) = (\mult(\mu)_1, \dots, \mult(\mu)_r) \vdash k$ are the part multiplicities of $\mu$ written in weakly decreasing order. If $V$ is any $\symm_n$-module, then $\eta_\mu V$ is a $\Stab(\mu)$-module, and we have a Frobenius image 
\begin{equation}
    \Frob(\eta_\mu V ) \in \Lambda_{\mult(\mu)_1} \otimes \cdots \otimes \Lambda_{\mult(\mu)_r}.
\end{equation}

\begin{defn}
    \label{def:psi-operator}
    Let $\psi_\mu: \Lambda_n \longrightarrow \Lambda_{\mult(\mu)_1} \otimes \cdots \otimes \Lambda_{\mult(\mu)_r}$ be the linear operator characterized by
    \[ \psi_\mu: \Frob(V) \mapsto \Frob(\eta_\mu V)\]
    for all $\symm_n$-modules $V$.
\end{defn}

For any $\symm_n$-modules $V$ and $W$, we have an isomorphism of $\Stab(\mu)$-modules $\eta_\mu(V \oplus W) \cong \eta_\mu V \oplus \eta_\mu W$. This shows that the operator $\psi_\mu$ is well-defined. Finding the isotypic decomposition of $\eta_\mu V^\mu$ as a $\Stab(\mu)$-module amounts to solving the following problem.

\begin{problem}
    \label{prob:psi-on-schur}
    If $\lambda \vdash n$, give a combinatorial rule for the nonnegative integers $a_{\lambda,\mu}^{\nu^{(1)},\dots,\nu^{(r)}}$ appearing in the Schur expansion \[\psi_\mu(s_\lambda) = \sum_{\nu^{(i)} \, \vdash \, \mult(\mu)_i} a_{\lambda,\mu}^{\nu^{(1)},\dots,\nu^{(r)}} \cdot s_{\nu^{(1)}} \otimes \cdots \otimes s_{\nu^{(r)}}.\]
\end{problem}

Although Problem~\ref{prob:psi-on-schur} is open, a combinatorial rule for the $h$-expansion of $\psi_\mu(h_\lambda)$ is available for $\lambda \vdash n$. This facilitates computation of $\psi_\mu(s_\lambda)$ via the change-of-basis $h_\lambda = \sum_{\rho  \, \vdash \, n} K_{\lambda,\rho} \cdot s_\rho$. Describing this rule requires notation related to multisets.

Let $S$ be a multiset with $n$ elements. A {\em multiset partition of $S$} is a multiset $\pi = \{ B_1,\dots,B_k\}$ of multisets such that we have the multiset union $S = B_1 \Cup \cdots \Cup B_k$. The multisets $B_1,\dots,B_k$ are the {\em blocks} of $S$. The {\em shape} of $\pi$ is the partition $\shape(\pi) \vdash n$ obtained by listing the block cardinalities $\# B_1, \dots ,\# B_k$ in weakly decreasing order. We let $\type(\pi) \vdash k$ be the partition encoding the multiset multiplicities among the blocks of $\pi$. Finally, if $i > 0$ we write $\pi \mid_i := \{ B_j \in \pi \,:\, \# B_j = i \}$ for the restriction of $\pi$ to blocks of size $i$. 

An example should clarify the notions in the above paragraph. Let $S = \{1^9,2^5,3^4\}$. Then
\[ \pi = \{ \{ 1,1,2\}, \{1,1,2\}, \{1,3\}, \{1,3\}, \{1,3\}, \{2,3\}, \{1\},\{1\},\{2\},\{2\}\}.\]
is a multiset partition of $S$ with 10 blocks with  $\shape(\pi) = \{3,3,2,2,2,2,1,1,1,1) \vdash 18$. We have the restrictions
\[ \pi \mid_3  \,= \{ \{1,1,2\}, \{1,1,2\}\}, \quad \pi \mid_2 \, = \{ \{1,3\},\{1,3\},\{1,3\},\{2,3\}\}, \quad \pi \mid_1 \, = \{ \{1\}, \{1\},\{2\},\{2\} \}, \]
and $\pi\mid_i \, = \varnothing$ for $i \neq 1,2,3$. The types of these restrictions are 
\[ \type(\pi\mid_3) = (2), \quad \type(\pi\mid_2) = (3,1), \quad \text{and} \quad \type(\pi\mid_1) = (2,2).\]

\begin{proposition}
    \label{prop:psi-on-h}
    Let $\lambda,\mu \vdash n$ be partitions and consider the multiset $S := \{1^{\lambda_1}, 2^{\lambda_2}, \dots \}$. Suppose $\ell(\mu) = k$ and let $\mult(\mu) = (\mult(\mu)_1 \geq \cdots \geq \mult(\mu)_r) \vdash k$ be the partition encoding the part multiplicities of $\mu$. Suppose $\mu$ has $\mult(\mu)_i$ parts of size $a_i$, where the numbers $a_1,\dots,a_r$ are distinct. We have
    \[ \psi_\mu(h_\lambda) = \sum_{\pi} h_{\type(\pi\mid_{a_1})} \otimes \cdots \otimes h_{\type(\pi\mid_{a_r})}\]
    where the sum is over all multiset partitions $\pi$ of $S$ into multisets which satisfy $\shape(\pi) = \mu$.
\end{proposition}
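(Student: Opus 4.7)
The strategy is to realize $\psi_\mu(h_\lambda) = \Frob(\eta_\mu M^\lambda)$ explicitly by picking a convenient basis of $M^\lambda$ on which the $\symm_\mu$-action (and then the $\Stab(\mu)$-action) is a permutation action, and then decomposing into orbits. Since $\Frob(M^\lambda) = h_\lambda$ by definition, I just need to compute $\eta_\mu M^\lambda$ as a $\Stab(\mu)$-module. I will identify $M^\lambda$ with the permutation module on the set $\WWW_\lambda$ of functions $f : [n] \to \ZZ_{>0}$ of content $\lambda$ (equivalently, words over $\ZZ_{>0}$ with letter multiset $S = \{1^{\lambda_1},2^{\lambda_2},\dots\}$).

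Let $B_1,\dots,B_k$ be the standard partition of $[n]$ into consecutive blocks of sizes $\mu_1,\dots,\mu_k$, so that $\symm_\mu = \symm_{B_1}\times\cdots\times\symm_{B_k} \subseteq \symm_n$. A $\symm_\mu$-orbit on $\WWW_\lambda$ is completely recorded by the ordered tuple $\vec{m}(f) = (f(B_1),\dots,f(B_k))$ of multisets, where $|f(B_j)| = \mu_j$ and $\bigCup_j f(B_j) = S$. Hence $\eta_\mu M^\lambda$ has $\FF$-basis given by orbit sums indexed by such ordered tuples $\vec{m}=(m_1,\dots,m_k)$. The key point is that $\iota_\mu$ is defined precisely so that $u \in \Stab(\mu)$ acts on this basis by permuting coordinates: $u\cdot(m_1,\dots,m_k) = (m_{u^{-1}(1)},\dots,m_{u^{-1}(k)})$, which one checks directly from the definition of $\iota_\mu$ as a product of block-swapping involutions.

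Now two ordered tuples $\vec{m},\vec{m}'$ lie in the same $\Stab(\mu)$-orbit exactly when they represent the same multiset partition $\pi$ of $S$ with $\shape(\pi)=\mu$; so $\eta_\mu M^\lambda$ decomposes as a $\Stab(\mu)$-permutation module indexed by such $\pi$. For a fixed $\pi$, the stabilizer of a representative $\vec{m}$ in $\Stab(\mu) = \symm_{\mult(\mu)_1}\times\cdots\times\symm_{\mult(\mu)_r}$ factors across the distinct block sizes: within the $\symm_{\mult(\mu)_i}$ factor that permutes the indices of blocks of size $a_i$, the stabilizer is the Young subgroup $\symm_{\type(\pi|_{a_i})} \subseteq \symm_{\mult(\mu)_i}$, because repeated blocks (in the multiset sense) can be freely permuted while distinct blocks cannot. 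Thus the orbit contributes the tensor product of permutation modules $M^{\type(\pi|_{a_1})}\otimes\cdots\otimes M^{\type(\pi|_{a_r})}$, with Frobenius image $h_{\type(\pi|_{a_1})}\otimes\cdots\otimes h_{\type(\pi|_{a_r})}$. Summing over $\pi$ gives the claim.

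The only real obstacle is the bookkeeping in the third step: one must verify that the $\iota_\mu$-induced action on $\symm_\mu$-orbit sums really is coordinate permutation of the tuples $\vec{m}$, and that the stabilizer calculation respects the product structure of $\Stab(\mu)$. Both reduce to unpacking the definition of $\iota_\mu$ on the generators $(i,i+1)$ with $\mu_i=\mu_{i+1}$, which swap two equal-sized blocks $B_i$ and $B_{i+1}$ pointwise; everything else is formal.
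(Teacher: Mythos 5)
Your proposal is correct and follows essentially the same route as the paper: identify $\psi_\mu(h_\lambda)$ with $\Frob(\eta_\mu M^\lambda)$, realize $\eta_\mu M^\lambda$ as the permutation module on ordered multiset partitions of $S$ with block sizes $\mu_1,\dots,\mu_k$, observe that $\Stab(\mu)$ acts through $\iota_\mu$ by permuting equal-sized blocks, and decompose into orbits indexed by unordered multiset partitions $\pi$ of shape $\mu$, each contributing $M^{\type(\pi\mid_{a_1})}\otimes\cdots\otimes M^{\type(\pi\mid_{a_r})}$. Your write-up merely spells out the stabilizer computation (repeated blocks permuted freely, distinct blocks not) that the paper leaves implicit in its example.
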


For example, suppose $\lambda = (3,2)$ and $\mu = (2,1,1,1)$. We have $S = \{1,1,1,2,2\}$, and the relevant multiset partitions $\pi$ are 
\[ \{ \{1,1\},\{1\},\{2\}, \{2\} \}, \quad \{ \{1,2\},\{1\},\{1\},\{2\}\}, \quad \text{and} \quad \{\{2,2\},\{1\},\{1\},\{1\}\}.\]
We have $\mult(\mu) = (3,1)$ and Proposition~\ref{prop:psi-on-h} gives
\[ \psi_\mu(h_\lambda) = h_{2,1} \otimes h_1 + h_{2,1} \otimes h_1 + h_3 \otimes h_1\]
where the first tensor factor tracks $\type(\pi\mid_1)$ and the second tensor factor tracks $\type(\pi \mid_2)$.

\begin{proof}
    Recall that $\Frob(M^\lambda) = h_\lambda$ where $M^\lambda$ is the permutation $\symm_n$-module with basis given by length $n$ words $w = [w_1, \dots, w_n]$ with $\lambda_i$ copies of the letter $i$. For example, if $n = 18$ and $\lambda = (9,5,4) \vdash n$ one such word is
    \[w= [2 \,, 1, \, 1, \, 1 , \, 2, \, 1, \, 1, \, 3, \, 3, \, 2, \, 1, \, 3, \, 3, \, 1, \, 2, \, 1, \, 1, \, 2 ]. \]
    The module $\eta_\mu M^\lambda$ therefore has basis given by the collection of {\em ordered} multiset partitions $B_1,  \dots,  B_k$ of $S$ where $\# B_i = \mu_i$. If $\mu = (3,3,2,2,2,1,1,1,1)$ and $w \in M^\lambda$ is as above, we have
    \[ \eta_\mu \cdot w \quad  \leftrightarrow \quad  \{1, \, 1, \, 2\}, \,  \{1, \, 1, \, 2\}, \quad \{1, \, 3\}, \, \{2, \, 3\}, \, \{1, \, 3\}, \, \{1, 3\}, \quad \{2\}, \, \{1\}, \, \{1\}, \, \{2\}.\]
    The group $\Stab(\mu)$ acts on $\eta_\mu M^\lambda$ by permuting blocks of the same size; in our example we have $\Stab(\mu) = \symm_2 \times \symm_4 \times \symm_4$ and the ordered multisetset partition above generates a $\Stab(\mu)$-orbit of type $M^{(2)} \otimes M^{(3,1)} \otimes M^{(2,2)}$. In general one has
    \begin{equation}
        \eta_\mu M^\lambda \cong_{\Stab(\mu)} \bigoplus_\pi M^{\type(\pi \mid_{a_1})} \otimes \cdots \otimes M^{\type(\pi \mid_{a_r})}
    \end{equation}
    and the result follows.
\end{proof}

\section{Future directions}
\label{sec:Future}

\subsection{Log-concavity}
A sequence $a_0, a_1, a_2, \dots$ in $\RR_{> 0}$ is {\em log-concave} if $a_k^2 \geq a_{k-1} \cdot a_{k+1}$ for all $k > 0$.

\begin{conjecture}
    \label{conj:log-concave-hilbert}
    For any $\alpha,\beta \models_0 n$, the coefficient sequence of the polynomial
    \[ \Hilb(R_{\alpha,\beta};q) = \sum_{A \in \CCC_{\alpha,\beta}} q^{n - \zigzag(A)}\]
    is log-concave.
\end{conjecture}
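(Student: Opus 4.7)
The plan is to first translate the conjecture into a combinatorial statement about Kostka numbers, then pursue three complementary attacks. By Corollary~\ref{cor:hilbert-series} the coefficient of $q^d$ in $\Hilb(R_{\alpha,\beta};q)$ equals
\[ c_d = \sum_{\substack{\lambda \vdash n \\ \lambda_1 = n-d}} K_{\lambda,\alpha} \cdot K_{\lambda,\beta}, \]
so the conjecture amounts to the inequalities $c_d^2 \geq c_{d-1}\, c_{d+1}$ for all valid $d \geq 1$. Since $K_{\lambda,\alpha}$ depends only on the multiset of parts of $\alpha$, we may assume without loss of generality that $\alpha$ and $\beta$ are partitions. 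It suffices to prove log-concavity of the polynomial $r(q) := \sum_{j \geq 0} r_j q^j$ with $r_j := \sum_{\lambda : \lambda_1 = j} K_{\lambda,\alpha} K_{\lambda,\beta}$, since $c_d = r_{n-d}$.

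The first and most promising approach is Lorentzian in the sense of Br\"and\'en--Huh. The plan is to show that a suitable homogenization of $r(q)$ lies in the Lorentzian cone. Recent work of Huh, Matherne, M\'esz\'aros, and St.~Dizier establishes that normalized Schur polynomials are Lorentzian, and via the Cauchy identity $\sum_\lambda s_\lambda(\xxx) s_\lambda(\yyy) = \prod_{i,j}(1 - x_iy_j)^{-1}$ the quantity $r_j$ admits an expression as a structured coefficient extraction from a product of Schur-type objects. The combinatorial key would be a clean ``first-row filtration'' on Schur functions that remains compatible with the Lorentzian closure operations (products, linear contractions, directional derivatives). One could also pursue real-rootedness of $r(q)$ directly, which by Aissen--Schoenberg--Whitney would immediately imply log-concavity.

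A second approach is to construct a direct combinatorial injection
\[ \Psi : \CCC_{\alpha,\beta}^{(d-1)} \times \CCC_{\alpha,\beta}^{(d+1)} \hookrightarrow \CCC_{\alpha,\beta}^{(d)} \times \CCC_{\alpha,\beta}^{(d)}, \]
where $\CCC_{\alpha,\beta}^{(e)} := \{ A \in \CCC_{\alpha,\beta} : \zigzag(A) = n-e\}$. By Theorem~\ref{thm:zigzag-characterization} this reduces, via matrix-ball RSK, to manipulating two pairs of SSYT of prescribed contents $(\alpha,\beta)$ whose shapes have first rows of lengths $n-d+1$ and $n-d-1$, producing a new pair whose shapes both have first row of length $n-d$. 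A natural candidate is a symmetric jeu-de-taquin style slide moving the rightmost cell of the first row of the ``longer'' tableau pair into the first row of the ``shorter'' pair; the delicate part is preserving the RSK content constraints imposed by $\alpha$ and $\beta$, and ensuring the procedure is injective.

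A third approach would exploit Theorem~\ref{thm:graded-module-structure}, which decomposes $(R_{\alpha,\beta})_d$ as $\bigoplus_{\lambda_1=n-d} (V^\lambda)^{\symm_\alpha} \otimes (V^\lambda)^{\symm_\beta}$, and attempt to upgrade $R_{\alpha,\beta}$ (or a geometric model attached to the contingency table locus $\CCC_{\alpha,\beta} \subseteq \Mat_{k \times p}(\FF)$) into an object satisfying a K\"ahler package; Hodge--Riemann bilinear relations would then deliver log-concavity of the graded dimensions directly. The main obstacle underlying all three strategies is that even the special case $\alpha = \beta = (1^n)$ asserts log-concavity of the distribution of permutations by longest increasing subsequence length, a well-known open problem. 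Consequently any successful attack must introduce genuinely new ideas at this level; given the rapid recent development of the theory of Lorentzian polynomials and log-concavity for symmetric-function coefficients, the first approach seems the most tractable starting point.
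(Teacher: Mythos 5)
There is a genuine gap here: you have not proved the statement, and indeed no proof exists in the paper either --- this is Conjecture~\ref{conj:log-concave-hilbert}, which the authors state as an open problem. Your opening reduction is fine: by Corollary~\ref{cor:hilbert-series} the coefficient of $q^d$ is $\sum_{\lambda_1 = n-d} K_{\lambda,\alpha} K_{\lambda,\beta}$, and one may assume $\alpha,\beta$ are partitions. But everything after that is a research program rather than an argument. The Lorentzian approach is not carried out: you would need to show that the generating polynomial obtained by aggregating $K_{\lambda,\alpha}K_{\lambda,\beta}$ over all $\lambda$ with fixed first row arises from Lorentzian polynomials via operations preserving the Lorentzian property, and the ``first-row filtration compatible with Lorentzian closure operations'' you invoke is precisely the missing ingredient, not an available tool; moreover real-rootedness of $r(q)$ is a strictly stronger property that there is no evidence for and that you do not attempt to verify even in small cases. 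The injection $\Psi$ between level sets of the zigzag statistic is likewise only named, with the ``delicate part'' (content preservation and injectivity of the proposed slide) left entirely open --- and that delicate part is the whole problem. The Hodge-theoretic route via Theorem~\ref{thm:graded-module-structure} would require exhibiting a K\"ahler package for $R_{\alpha,\beta}$ or a geometric model of $\CCC_{\alpha,\beta}$, which is not done and is not known.

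You correctly note at the end that the specialization $\alpha = \beta = (1^n)$ is the log-concavity of the longest-increasing-subsequence distribution on $\symm_n$, i.e.\ Chen's conjecture, which is open; the paper makes the same observation. So any complete proof of the general statement would in particular resolve that open problem, and none of your three sketches comes close to doing so. As a survey of plausible lines of attack your proposal is reasonable and consistent with how the authors frame the problem (they also suggest an equivariant strengthening, Conjecture~\ref{conj:equivariant-log-concave}, and a Lefschetz-type injectivity statement, Conjecture~\ref{conj:weak-lefschetz}, as possible structural routes), but it should not be presented as a proof.
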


When $\alpha = \beta = (1^n)$, the zigzag number is the longest increasing subsequence statistic on $\symm_n$. In this context,  Conjecture~\ref{conj:log-concave-hilbert} is equivalent to a conjecture of Chen \cite{Chen}. Conjecture~\ref{conj:log-concave-hilbert} is one of a growing list of conjectures on the log-concavity (or at least unimodality) of $\Hilb(\RRR(\Zpoints);q)$ for `interesting' matrix loci $\Zpoints$ \cite{Liu, LMRZ, Rhoades}.

We propose an equivariant strengthening of Conjecture~\ref{conj:log-concave-hilbert} as follows. Let $G$ be a group and let $V,W$ be $\FF[G]$-modules. The tensor product $V \otimes_\FF W$ is an $\FF[G]$-module via $g \cdot (v \otimes w) := (g \cdot v) \otimes (g \cdot w)$ for all $g \in G, v \in V,$ and $w \in W$. In the $G = \symm_n$ context, this operation is usually called {\em Kronecker product}. A sequence $V_0, V_1, V_2, \dots$ of $G$-modules is {\em $G$-log-concave} if for all $k > 0$ we have a $G$-equivariant injection $V_{k-1} \otimes_\FF V_{k+1} \hookrightarrow V_k \otimes_\FF V_k$. Finally, a graded $G$-module $V = \bigoplus_{k \geq 0} V_k$ is $G$-log-concave if the sequence $V_0, V_1, V_2, \dots$ has this property.

\begin{conjecture}
    \label{conj:equivariant-log-concave}
    For any $\alpha,\beta \models_0 n$, the graded module $R_{\alpha,\beta}$ is $\Stab(\alpha) \times \Stab(\beta)$-log-concave.
\end{conjecture}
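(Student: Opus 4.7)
The plan is to apply the module isomorphism from Theorem~\ref{thm:graded-module-structure} to reformulate the conjecture in representation-theoretic terms and then seek an equivariant Lefschetz-type construction of the required injections. Writing $V^\lambda_\gamma := (V^\lambda)^{\symm_\gamma}$ for the $\symm_\gamma$-invariant subspace of the Specht module $V^\lambda$, the graded decomposition
\[ (R_{\alpha,\beta})_d \cong \bigoplus_{\substack{\lambda \vdash n \\ \lambda_1 = n-d}} V^\lambda_\alpha \otimes V^\lambda_\beta \]
reduces the conjecture to producing, for each $k > 0$, a $\Stab(\alpha) \times \Stab(\beta)$-equivariant injection
\[ \bigoplus_{\substack{\lambda_1 \, = \, n-k+1 \\ \mu_1 \, = \, n-k-1}} V^\lambda_\alpha \otimes V^\mu_\alpha \otimes V^\lambda_\beta \otimes V^\mu_\beta \hookrightarrow \bigoplus_{\substack{\lambda_1 \, = \, n-k \\ \mu_1 \, = \, n-k}} V^\lambda_\alpha \otimes V^\mu_\alpha \otimes V^\lambda_\beta \otimes V^\mu_\beta. \]

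First I would attempt to produce such injections through multiplication by a distinguished element. Under the isomorphism $\sigma_{\alpha,\beta} : R_{\alpha,\beta} \xrightarrow{\sim} (R_n)^{\symm_\alpha \times \symm_\beta}$ from the proof of Theorem~\ref{thm:graded-module-structure}, any $\Stab(\alpha) \times \Stab(\beta)$-invariant Lefschetz element of $R_n$ would descend to a useful operator. A direct construction in degree one is blocked because all row and column sums vanish modulo $I_{\alpha,\beta}$, so one would need to search in higher degrees, perhaps among symmetrized products of the polarization operators of Definition~\ref{def:polarization-operators}, which generate an $\littlesl_k \oplus \littlesl_p$-action on $\FF[\xxx_{k \times p}]$. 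A parallel strategy would be to first establish $\symm_n \times \symm_n$-log-concavity of $R_n$ (the $\alpha = \beta = (1^n)$ case) and then pass to $\symm_\alpha \times \symm_\beta$-invariants; this transfer is not automatic, because taking invariants does not commute with tensor products, and one would need to verify that an equivariant embedding on $R_n \otimes R_n$ descends to $V^H \otimes V^H$ rather than merely to the larger space $(V \otimes V)^H$.

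The main obstacle I expect is that even the non-equivariant Conjecture~\ref{conj:log-concave-hilbert} generalizes Chen's open longest-increasing-subsequence log-concavity conjecture, and the equivariant version requires all intermediate combinatorial choices to respect the symmetry group. Since $R_{\alpha,\beta}$ has neither Poincar\'e duality nor a symmetric Hilbert series in general, naive Lefschetz arguments cannot cover the whole range of degrees; a Hodge-Riemann approach in the spirit of Adiprasito-Huh-Katz, producing a $G$-equivariant bilinear form on $R_{\alpha,\beta}$ whose signature controls the virtual character $[(R_{\alpha,\beta})_k]^2 - [(R_{\alpha,\beta})_{k-1}] \cdot [(R_{\alpha,\beta})_{k+1}]$, seems the most promising route. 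A complementary combinatorial line would be to seek an injection directly on the standard monomial basis $\{\mb(A) : A \in \CCC_{\alpha,\beta}\}$, using the RSK shape-interpretation of the degree from Theorem~\ref{thm:zigzag-characterization} and adapting Chen-type bijective arguments in a way that preserves the $\Stab(\alpha) \times \Stab(\beta)$-action on contingency tables.
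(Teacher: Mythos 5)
There is a fundamental issue to flag up front: the statement you are addressing is stated in the paper as an \emph{open conjecture}, with no proof given, and your text is likewise a survey of possible attack routes rather than a proof. The reformulation via Theorem~\ref{thm:graded-module-structure}, reducing the claim to $\Stab(\alpha)\times\Stab(\beta)$-equivariant injections
\[
\bigoplus_{\substack{\lambda_1 = n-k+1 \\ \mu_1 = n-k-1}} (V^\lambda)^{\symm_\alpha}\otimes (V^\mu)^{\symm_\alpha}\otimes (V^\lambda)^{\symm_\beta}\otimes (V^\mu)^{\symm_\beta}
\hookrightarrow
\bigoplus_{\substack{\lambda_1 = n-k \\ \mu_1 = n-k}} (V^\lambda)^{\symm_\alpha}\otimes (V^\mu)^{\symm_\alpha}\otimes (V^\lambda)^{\symm_\beta}\otimes (V^\mu)^{\symm_\beta},
\]
is a correct and sensible first step, but every route you then sketch (an invariant Lefschetz-type element, transfer from the $\alpha=\beta=(1^n)$ case, a Hodge--Riemann form, or a basis-level injection on $\{\mb(A)\}$) is left entirely unexecuted, and each stops exactly where the real difficulty begins. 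Since even the non-equivariant Conjecture~\ref{conj:log-concave-hilbert} specializes to Chen's open conjecture on longest increasing subsequences, no elementary completion of any of these sketches should be expected; as written, the proposal establishes nothing beyond the (already known) reduction supplied by Theorem~\ref{thm:graded-module-structure}.

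Two more concrete problems with the sketched routes. First, your claim that ``a direct construction in degree one is blocked because all row and column sums vanish'' is not correct as an argument: a $\Stab(\alpha)\times\Stab(\beta)$-invariant linear form need not be a combination of full row and column sums, since $\Stab(\alpha)$ and $\Stab(\beta)$ only permute rows and columns with equal part sizes (indeed, when $\alpha,\beta$ have distinct parts the stabilizers are trivial and every variable is invariant; the element $L_{\alpha,\beta}$ of Conjecture~\ref{conj:weak-lefschetz} is a natural nonzero degree-one candidate). Second, even granting an equivariant Lefschetz-type element $L$, multiplication by powers of $L$ only yields injections $(R_{\alpha,\beta})_{k}\hookrightarrow (R_{\alpha,\beta})_{k'}$, i.e.\ equivariant unimodality-type statements; it does not by itself produce the Kronecker-product injection $(R_{\alpha,\beta})_{k-1}\otimes (R_{\alpha,\beta})_{k+1}\hookrightarrow (R_{\alpha,\beta})_{k}\otimes (R_{\alpha,\beta})_{k}$ demanded by $G$-log-concavity, and since $R_{\alpha,\beta}$ has no Poincar\'e duality and an asymmetric Hilbert series, the usual Hodge--Riemann machinery does not apply off the shelf. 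These are exactly the reasons the statement remains a conjecture in the paper, so the honest conclusion is that there is no proof here to compare with.
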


Conjecture~\ref{conj:equivariant-log-concave} joins the ranks of a family of equivariant log-concavity conjectures \cite{LMRZ, NR, Rhoades} on $\RRR(\Zpoints)$ for matrix loci $\Zpoints$.

\subsection{Top-heavy conjecture}
Let \(\alpha, \beta \models_0 n\) be compositions of lengths \(k\) and \(p\), respectively, and define  
\begin{equation}
    m := \min\{\zigzag(A) : A \in \mathcal{C}_{\alpha,\beta} \}.
\end{equation}  
Computer evidence suggests that the sequence $\dim(R_{\alpha,\beta})_0,\dim(R_{\alpha,\beta})_2,\dots,\dim(R_{\alpha,\beta})_{n-m}$ is {\em top-heavy}, i.e.
\begin{equation}
    \dim (R_{\alpha,\beta})_{k} \leq \dim (R_{\alpha,\beta})_{n-m-k}.
\end{equation}  
We suggest an algebraic lift of this inequality. To that end, we partition the \(n \times n\) matrix into block submatrices, where the row blocks have sizes \(\alpha_1, \dots, \alpha_k\) and the column blocks have sizes \(\beta_1, \dots, \beta_p\). We then define a linear element \( L_{\alpha,\beta} \in R_{\alpha,\beta} \) as the sum of \( x_{i,j} \) over all pairs \((i,j)\) such that the \((i,j)\)-block contains at least one diagonal element of the \(n \times n\) matrix.  

For example, if \(\alpha = (2,2,1)\) and \(\beta = (3,2)\), the block decomposition of the \(5 \times 5\) matrix is  
\[
    \yyy_{5 \times 5} = 
    \left(\begin{array}{c c c | c c }
     \mathbf{a_{1,1}} & a_{1,2} & a_{1,3} & a_{1,4} & a_{1,5} \\
     a_{2,1} & \mathbf{a_{2,2}} & a_{2,3} & a_{2,4} & a_{2,5} \\
     \hline
     a_{3,1} & a_{3,2} & \mathbf{a_{3,3}} & a_{3,4} & a_{3,5} \\
     a_{4,1} & a_{4,2} & a_{4,3} & \mathbf{a_{4,4}} & a_{4,5} \\ 
     \hline
     a_{5,1} & a_{5,2} & a_{5,3} & a_{5,4} & \mathbf{a_{5,5}}
    \end{array}\right).
\]  
In this case, the element \( L_{(2,2,1),(3,2)} \) is given by  
\[
    L_{(2,2,1),(3,2)} = x_{1,1} + x_{2,1} + x_{2,2} + x_{3,2}.
\]  

\begin{conjecture}  
\label{conj:weak-lefschetz}
For any \(\alpha, \beta \models_0 n\), let  
$m = \min\{ \text{zigzag} (A) : A \in \mathcal{C}_{\alpha,\beta} \}.$
For \( k = 0,1,\dots,\lfloor \frac{m}{2} \rfloor \), the map  
\[
    L_{\alpha,\beta}^{n-m-2k} : (R_{\alpha,\beta})_{k} \to (R_{\alpha,\beta})_{n-m-k}
\]  
is injective.  
\end{conjecture}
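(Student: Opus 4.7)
The approach is to reduce Conjecture~\ref{conj:weak-lefschetz} to the permutation-matrix ring $R_n = R_{(1^n),(1^n)}$ via the isomorphism $\sigma_{\mu,\nu}:R_{\mu,\nu}\xrightarrow{\sim}(R_n)^{\symm_\mu\times\symm_\nu}$ of Theorem~\ref{thm:graded-module-structure}. After relabeling rows and columns we may assume $\alpha=\mu$ and $\beta=\nu$ are partitions. The image $\widetilde L := \sigma_{\mu,\nu}(L_{\mu,\nu})$ is the sum of those $y_{a,b}$ whose $(i,j)$-block touches the diagonal of the $n\times n$ matrix; since $\symm_\mu\times\symm_\nu$ stabilizes each block, $\widetilde L$ is $\symm_\mu\times\symm_\nu$-invariant, so multiplication by $\widetilde L$ preserves the invariant subring, and Conjecture~\ref{conj:weak-lefschetz} reduces to the unconditional injectivity of $\widetilde L^{n-m-2k}:(R_n)_k\to(R_n)_{n-m-k}$.

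The base case $\mu=\nu=(1^n)$, with $\widetilde L=L_n:=y_{1,1}+\cdots+y_{n,n}$, $m=1$, and $k=0$, asks for $L_n^{n-1}\neq 0$ in $R_n$. Since $(R_n)_{n-1}$ is one-dimensional with basis $\mb(w_0)$ (where $w_0$ is the reverse permutation, the unique one with $\zigzag=1$), we must compute the matrix-ball standard-form reduction of $L_n^{n-1}$ modulo $I_n$ and verify its coefficient on $\mb(w_0)$ is nonzero. I would attempt this by induction on $n$: expand
\[
L_n^{n-1}=\sum_{(i_1,\dots,i_{n-1})} y_{i_1,i_1}\cdots y_{i_{n-1},i_{n-1}},
\]
group terms by the omitted index $j\in[n]$, and use the row-sum and column-sum relations of $I_n$ to collapse the resulting expression to a nonzero multiple of $\mb(w_0)$.

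For general $\mu,\nu$ and $k>0$, the map $\widetilde L^{n-m-2k}$ acts between higher graded pieces of $R_n$. I would combine the base case with the polarization operators of Definition~\ref{def:polarization-operators}, which transport relations between different $R_{\mu,\nu}$'s and, by Lemma~\ref{lem:polarization-leading}, respect zigzag leading terms under diagonal term orders; iterated polarization should propagate the base-case nonvanishing into the $(R_{\mu,\nu})_k$ for all admissible $k$. A complementary route is to use Theorem~\ref{thm:graded-module-structure} to realize multiplication by $\widetilde L$ as an $\symm_n$-equivariant operator on the $(\eta_\mu,\eta_\nu)$-symmetrized summands of $\bigoplus_\lambda V^\lambda\otimes V^\lambda$, and to analyze its kernel representation-theoretically using Proposition~\ref{prop:psi-on-h}.

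\emph{Main obstacle.} As emphasized after Corollary~\ref{cor:deduced-polynomials} and Lemmas~\ref{lem:zigzag-leading}-\ref{lem:zigzag-leading-transpose}, explicit generators for $I_{\alpha,\beta}$ beyond those in Definition~\ref{def:contingency-quotient} are unavailable, so even reducing $L_n^{n-1}$ to standard form in the base case requires nonexplicit relations. A more structural strategy---realizing $R_{\alpha,\beta}$ as an orbit-harmonics flat limit of the cohomology ring of a smooth projective variety whose torus-fixed locus is indexed by $\CCC_{\alpha,\beta}$, and inheriting a K\"ahler package whose Hard Lefschetz element specializes to $L_{\alpha,\beta}$---would bypass these reductions, but constructing such a variety is the principal difficulty.
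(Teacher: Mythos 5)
This statement is not proved in the paper at all: Conjecture~\ref{conj:weak-lefschetz} is posed as an open problem, and the authors explicitly remark that a proof would likely require geometric input (a structure playing the role of a K\"ahler package) which they have not identified. So there is no paper proof to compare against, and your proposal, as written, does not close the gap either --- it is a research plan in which every step past the first reduction is conditional (``I would attempt'', ``should propagate''), and you yourself flag the decisive obstacle at the end.

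Beyond the incompleteness, two concrete steps would fail as stated. First, your reduction replaces injectivity of $\widetilde L^{\,n-m-2k}$ on the invariant piece $\bigl((R_n)_k\bigr)^{\symm_\mu\times\symm_\nu}\cong (R_{\mu,\nu})_k$ by ``unconditional'' injectivity on all of $(R_n)_k$. That is a strictly stronger statement and is false in general for dimension reasons: the invariant pieces vanish above degree $n-m$ (since $\Hilb(R_{\mu,\nu};q)$ has top degree $n-m$), while $(R_n)_d\neq 0$ up to $d=n-1$, so when $m$ is large one can have $\dim_\FF (R_n)_k > \dim_\FF (R_n)_{n-m-k}$ and no injection exists on the full graded piece. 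The statement you actually need is injectivity on the invariant subspace, which is exactly the original conjecture transported through $\sigma_{\mu,\nu}$, so this reduction buys nothing unless $\mu=\nu=(1^n)$. Second, the proposed propagation ``by iterated polarization'' is unsupported: as the paper stresses, the operators $\rho_{i_1\to i_0}$ and $\kappa_{j_1\to j_0}$ do \emph{not} preserve $I_{\alpha,\beta}$ (only $I^\rowsum_\beta$ and $I^\colsum_\alpha$ separately), so they do not induce maps on $R_{\alpha,\beta}$, let alone maps intertwining multiplication by $L_{\alpha,\beta}$; and Lemma~\ref{lem:polarization-leading} controls leading terms of single polynomials, not nonvanishing of products like $\widetilde L^{\,n-m-2k}\cdot v$ modulo the ideal. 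Likewise, multiplication by $\widetilde L$ is only $\Stab(\mu)\times\Stab(\nu)$-equivariant, not $\symm_n\times\symm_n$-equivariant, so Theorem~\ref{thm:graded-module-structure} and Proposition~\ref{prop:psi-on-h} do not by themselves give a handle on its kernel. Even the base case $L_n^{\,n-1}\neq 0$ in $R_n$ is left as an intended induction with no mechanism for carrying out the reduction modulo $I_n$, precisely because explicit relations beyond the listed generators are unavailable. In short, the conjecture remains open, and the genuinely new idea it requires (the geometric or Lefschetz-theoretic structure behind $L_{\alpha,\beta}$) is the part your plan defers.
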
  

Proving Conjecture~\ref{conj:weak-lefschetz} would likely require geometric techniques. Identifying the underlying geometric structure related to this conjecture is an intriguing open problem.

\subsection{Limiting distributions} Suppose that for each $n >0$ we are given weak compositions $\alpha(n),\beta(n) \models_0 n$.  One can ask for the limiting shape of the coefficient sequence of the Hilbert series
\begin{equation}
\label{eq:limiting}
    \Hilb( R_{\alpha(n),\beta(n)};q) = \sum_{A \in \CCC_{\alpha(n),\beta(n)}} q^{n-\zigzag(A)}
\end{equation}
as $n \to \infty$. When $\alpha(n) = \beta(n) = (1^n)$, this amounts to the limiting distribution of the longest increasing subsequence statistic on $\symm_n$. In this case, Baik, Deift, and Johansson famously showed \cite{BDJ} that this limiting distribution is the Tracy-Widom distribution for a Gaussian unitary ensemble. When $\alpha(n),\beta(n)$ are `reasonable' functions of $n$, one might hope for a nice limiting distribution of \eqref{eq:limiting}. One place to look may be the functions \[ \alpha(n) = \beta(n) = (\overbrace{d,\dots,d}^{q},r), \quad \quad n = q\cdot d + r, \quad 0 \leq r < d\]
for fixed $d$. Figure~\ref{fig:coefficient-figure} shows the case $n  = 60$ where $d=1,2,3,4$.


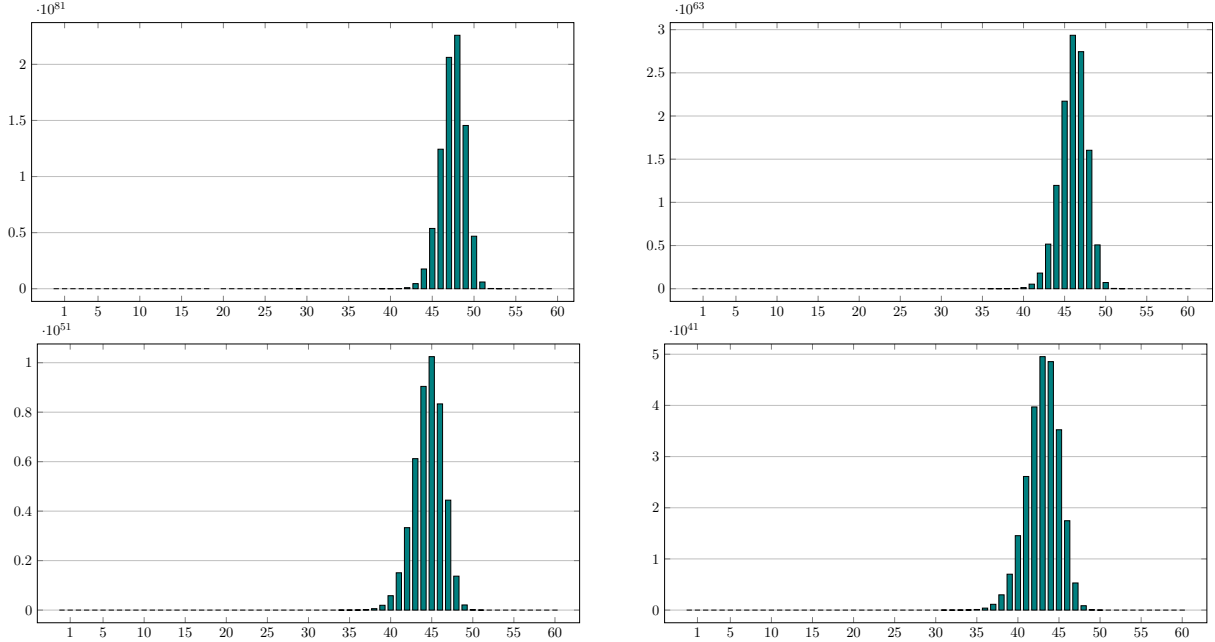
\begin{figure}
\begin{tikzpicture}[scale = 0.5]
    \begin{axis}[
        ymajorgrids=true,
        bar width=4pt,
        width=16cm,
        height=9cm,
        enlargelimits=0.05,
        xtick={1,5,10,15,20,25,30,35,40,45,50,55,60},
        ylabel style={at={(axis description cs:0.02,0.5)}, anchor=south},
        xlabel style={at={(axis description cs:0.5,-0.05)}, anchor=north}
    ]
    \addplot[ybar, fill=teal] coordinates {
        (0,1)
        (1,3481)
        (2,5851621)
        (3,6329639181)
        (4,4953109533951)
        (5,2988846947868807)
        (6,1447687482601462467)
        (7,578499468416768375547)
        (8,194538945880191885164142)
        (9,55882289445190125856537982)
        (10,13871858035569655993122428198)
        (11,3003101053234619836243294988438)
        (12,571194238941869175779849437632858)
        (13,96024776493391284512354802786801738)
        (14,14338575949172527832964867110076216498)
        (15,1909464678419065197802131758896939250754)
        (16,227535457430697745412499435864265131254799)
        (17,24328028687991328153614089674352694270581559)
        (18,2339113811472688502277654778306794059853563499)
        (29,202607444815864518792560988913570051638925224579)
        (20,15831468365324027218299523307120731328900971994505)
        (21,1117112951073704289164060302012753184712282502843905)
        (22,71234520883705192260893127544474396900805731744717605)
        (23,4106553547655147341710844172909909933553257664495111405)
        (24,214047059046253016288888877319022168888880344026378396380)
        (25,10085939380850856133146998090323665735800777575608380654076)
        (26,429447776828374945008891956588902876262471997021459648212556)
        (27,16511387220795567604685869448544444896127174737028045397681196)
        (28,572672495594979262825338794738392020356281425452234267523956756)
        (29,17894607700299703295952617215614009799494649283846631618530060276)
        (30,502968058628572662277191566575373626415624915252154954972665008452)
        (31,12691871855481828626593458025125606857596131143782738903573825981796)
        (32,286869467381919681822222469760492255054776533189183154318863752312230)
        (33,5792237419925383613451898590561388009628831263062366370911868157401964)
        (34,104145900363220144830466866571023152418199800997341553230438935249107258)
        (35,1661393542805513071742417067989822659686364866756920591886636429663178680)
        (36,23413655153323993212944806641432538187723487636946542215904509952446312690)
        (37,290030563932022002118220602447753575011631132138930828877529635970794362700)
        (38,3139198038828528286203710931455906285276708940460977128156446860885650927310)
        (39,29480487670047223803921747890109023556319166001013451281038912984170310753120)
        (40,238188858817559653907757766891040131636069359492222134403147802254653854771728)
        (41,1638759009110121823982506004487303838241549550728662507775364230503063574975316)
        (42,9479603856030503157955685146063700672586357866208188042547738509639642888641224)
        (43,45368617608497201905530039854748875664926717869975676357175086535901817870722812)
        (44,175923103423553571947761906278676245128973950100129233119563346326464104855276860)
        (45,537394830317050100339379519887032754646946119740464857911956705681737098244483360)
        (46,1243711511999821270591207565082889798761871176715300197918122808539228337822802740)
        (47,2062265432178679983886852088922462401452557170316484374161761008379074310593517320)
        (48,2259251055120372007733214696091079754018818083465717757461536975882962682765500625)
        (49,1455327054374385756982545351864306579536481867901002010423776062240740978062678405)
        (50,468104440722126644812839632177556187281953330916322512459026291795529190084140003)
        (51,60002895752771099779779088462943847999099581712023250349374731986619450937660387)
        (52,2233494474948495690243110568745222983262159502283551689273891105099703764639203)
        (53,15180807338873516832021030140438444665815147021460591742801378406314408952231)
        (54,9336151984930708021143911217956813677819162164640452787627883005534760901)
        (55,175243028250079660905018843213615929860825569549681884867765690541701)
        (56,17080691328825216538079811628828842602913045806045692424793199)
        (57,353580101123476924257628603730083960324608410748129)
        (58,1583850964596120042686772779038895)
        (59,1)
    };
    \end{axis}
\end{tikzpicture}
\qquad
\begin{tikzpicture} [scale = 0.5]
    \begin{axis}[
        ymajorgrids=true,
        bar width=4pt,
        width=16cm,
        height=9cm,
        enlargelimits=0.05,
        xtick={1,5,10,15,20,25,30,35,40,45,50,55,60},
        ylabel style={at={(axis description cs:0.02,0.5)}, anchor=south},
        xlabel style={at={(axis description cs:0.5,-0.05)}, anchor=north}
    ]
    \addplot[ybar, fill=teal] coordinates {
(0,1)
(1,841)
(2,354061)
(3,99222341)
(4,20772939251)
(5,3458007009059)
(6,475828838208069)
(7,55564045976391549)
(8,5611018333287998094)
(9,496935309504497821134)
(10,39017166613056105384570)
(11,2738959360028835815258970)
(12,173068583550241728595361670)
(13,9897210449802533763047487870)
(14,514496825851552763509513184220)
(15,24399695274794864043341778595020)
(16,1058723151008385941527839429882045)
(17,42130592518712194574634167971696245)
(18,1540435357083966004343352718195483545)
(19,51826770422356436059715283782188358145)
(20,1606219882438452174069220549834078428315)
(21,45891133089625152132275230356301642831515)
(22,1209281480774777011785556853519706897280365)
(23,29395470060999359243680951352723005700187765)
(24,659098988280330043320712834756778148379701690)
(25,13626596113732229532313689792676233898709629098)
(26,259613425706483043104788442235540980576369716518)
(27,4553991967844100976219068036265395109616043641878)
(28,73466116777585555200594511843283827435533803714518)
(29,1088408131965606035557459998018082880109329560809598)
(30,14782682129883106162213943015657418221127638743455932)
(31,183684609649595503230366295878103948527259339443584971)
(32,2083002272479690967751579066602000077565718769751810385)
(33,21496128445515881309196899412370118146097268916102992419)
(34,201200594443759980701927772128887863030919186730675905468)
(35,1701352237910774390071232219819479474951819147405375482339)
(36,12937699931727989346417815166661396947992944881396337111233)
(37,87995372230147781247130251577798543009614486927930417988962)
(38,531859954907043813965986788749754514715245671026932669583481)
(39,2834623973189549587368550887769351935433310039867357190421575)
(40,13195886342048444503694564022824258794317582006988239993646980)
(41,53029305169019833899879592572179557248073322748308734366601715)
(42,181234291399783693056158673334496665270187863703932880968776225)
(43,516603218064900278054841048006851447806548861637501361044150335)
(44,1196499228751076705991955673140003888237634136126738646583922195)
(45,2171686102231538126189856077902410336630120988024346599102330875)
(46,2934780639366372771663229827437168801136783235884367125678127845)
(47,2745039036174755381953488632582538135056444673013420829528233740)
(48,1603349701374345398418328224704850919114719416338166931192956685)
(49,507404991299033908024119596831662591009934195942523995136739230)
(50,71714788245765505002083415020246664918538888945726723745907426)
(51,3468883477658696252186037463735772651607683764130424039367410)
(52,39237649822015913453480732284994546801651406540229443470697)
(53,58907793637514610453246437257659904056909397360456603257)
(54,4936064135933021600652100275143772209085064006756885)
(55,5786066215471583670425501585570494026524159504)
(56,8539712850280932184223812817790172302)
(57,122238900487877503161968)
(58,1)
(59,0)
(60,0)
 };
    \end{axis}
\end{tikzpicture}

\begin{tikzpicture}[scale = 0.5]
    \begin{axis}[
        ymajorgrids=true,
        bar width=4pt,
        width=16cm,
        height=9cm,
        enlargelimits=0.05,
        xtick={1,5,10,15,20,25,30,35,40,45,50,55,60},
        ylabel style={at={(axis description cs:0.02,0.5)}, anchor=south},
        xlabel style={at={(axis description cs:0.5,-0.05)}, anchor=north}
    ]
    \addplot[ybar, fill=teal] coordinates {
( 0 , 1 )
( 1 , 361 )
( 2 , 65341 )
( 3 , 7906261 )
( 4 , 719177551 )
( 5 , 52420150663 )
( 6 , 3186253516843 )
( 7 , 165937172324083 )
( 8 , 7549585535318818 )
( 9 , 304442275353095538 )
( 10 , 11002911356629149066 )
( 11 , 359502339797239304826 )
( 12 , 10692601851901257055506 )
( 13 , 291111353401225142485026 )
( 14 , 7287442288368036430741866 )
( 15 , 168348758054386263802223130 )
( 16 , 3599501938317551906448310830 )
( 17 , 71400694155227503121926444830 )
( 18 , 1316463513160788347155078419030 )
( 19 , 22594272669245891761704445091430 )
( 20 , 361366358888792400203490780213570 )
( 21 , 5390015693513552124503723086570770 )
( 22 , 75011385852425733304256771365580970 )
( 23 , 974178628447547151665657239402577370 )
( 24 , 11805568238242763023509110354681107920 )
( 25 , 133450748667374662358781696838285801104 )
( 26 , 1406295905961892721849944787967277022544 )
( 27 , 13803036464378347005843004989562474536464 )
( 28 , 126042148841839888316909546350415285483144 )
( 29 , 1069237656094398921558553323118779939031304 )
( 30 , 8411751863701399951293940371840902991479912 )
( 31 , 61240234895708395024428026611851036034472296 )
( 32 , 411565030057431300809077512576531612280501630 )
( 33 , 2545700997499461276618917163199401737726522404 )
( 34 , 14442053802916190103277812580531696925380153858 )
( 35 , 74836378148229915969489975840906737917006938096 )
( 36 , 352478056848683720533482500020140337478492859546 )
( 37 , 1500171866367632534136323256362401215964549432916 )
( 38 , 5728735003535474161092096357452398800324569861926 )
( 39 , 19458271029527958082612857733165567490233937748456 )
( 40 , 58150615371890681335594745300853429406452432984696 )
( 41 , 150792522357529573854183023436473756541907463246468 )
( 42 , 333187561496165252636901270704413441800588042651265 )
( 43 , 612166473960477046827611976025849308658592324136807 )
( 44 , 904277500699544877140038509067464774335266117740889 )
( 45 , 1024541433301562666422540784990007500831895276058563 )
( 46 , 833489451366379070596403003889047730507334962969602 )
( 47 , 444406272725141368894294034430612268950733932234542 )
( 48 , 137191319951850876978531041247878895331374114535689 )
( 49 , 20756013058723437188172758556481217175217117333239 )
( 50 , 1227280757964267076010233553821065909232294936776 )
( 51 , 20649444036486494641029250848791539864003804600 )
( 52 , 62211558490125373466704457387386536200535945 )
( 53 , 16680855907382242757569850146200732004479 )
( 54 , 132849045285751465752727299721346970 )
( 55 , 4950062556686902009127867614 )
( 56 , 26525044132374655 )
( 57 , 1 )
( 58 , 0 )
( 59 , 0 )
( 60 , 0 )

    };
    \end{axis}
\end{tikzpicture}
\qquad
\begin{tikzpicture}[scale = 0.5]
    \begin{axis}[
        ymajorgrids=true,
        bar width=4pt,
        width=16cm,
        height=9cm,
        enlargelimits=0.05,
        xtick={1,5,10,15,20,25,30,35,40,45,50,55,60},
        ylabel style={at={(axis description cs:0.02,0.5)}, anchor=south},
        xlabel style={at={(axis description cs:0.5,-0.05)}, anchor=north}
    ]
    \addplot[ybar, fill=teal] coordinates {
    ( 0 , 1 )
( 1 , 196 )
( 2 , 19306 )
( 3 , 1274196 )
( 4 , 63391251 )
( 5 , 2535393225 )
( 6 , 84896763225 )
( 7 , 2446831329450 )
( 8 , 61925233080075 )
( 9 , 1396956113433700 )
( 10 , 28414967869572250 )
( 11 , 525870540212301250 )
( 12 , 8918533676370381900 )
( 13 , 139409991158971759125 )
( 14 , 2017919366694745786275 )
( 15 , 27149638012670624172075 )
( 16 , 340567938360558222392250 )
( 17 , 3992947128610711116239475 )
( 18 , 43841525884783552036632875 )
( 19 , 451486169059371821226382625 )
( 20 , 4365859060556611154308154750 )
( 21 , 39674746342591342437209454300 )
( 22 , 338997428322842651847153802500 )
( 23 , 2724019061981828396048724686025 )
( 24 , 20584015265255897078496056099625 )
( 25 , 146222775777318397736414883072189 )
( 26 , 975905984859966836273392402069419 )
( 27 , 6114117649397039576669414460541259 )
( 28 , 35916465270206788042340148949701644 )
( 29 , 197540464695914618567770036452810689 )
( 30 , 1015418039808624709931452906778990205 )
( 31 , 4867670790438007612131151031681555769 )
( 32 , 21705028363989010924253096327972385048 )
( 33 , 89747084693482977133257139362462420047 )
( 34 , 342846387431349168208200684273446843106 )
( 35 , 1204701507054345134264862207452783579806 )
( 36 , 3872980956001622739044318711229030038662 )
( 37 , 11318159448434091457556009660754267812138 )
( 38 , 29825221581583490847197483743855401808959 )
( 39 , 70159080085888339307980976816299067095635 )
( 40 , 145427559106496712579395151406452463777445 )
( 41 , 261141292853417120382223824519535244284951 )
( 42 , 397033224329160886423244911509658729802187 )
( 43 , 495320759111482725579642202711929249682713 )
( 44 , 485572274508694498645127586841883681768154 )
( 45 , 352416247536429872370577727415430061608050 )
( 46 , 174638268505609356306354595690355905639890 )
( 47 , 53019232482700194180514846688408102909865 )
( 48 , 8537954804017889441570442855880054392420 )
( 49 , 601148252994082451810621389690349944600 )
( 50 , 14180822366223271213966644985790473253 )
( 51 , 76567378147065948784283500486808772 )
( 52 , 53761677784529238036586921925378 )
( 53 , 2057525347426902264121856845 )
( 54 , 1050352205594063827477 )
( 55 , 657723055999 )
( 56 , 1 )
( 57 , 0 )
( 58 , 0 )
( 59 , 0 )
( 60 , 0 )
    };
    \end{axis}
\end{tikzpicture}
\caption{Upper left: coefficients of $\Hilb(R_{(1^{60}),(1^{60})}; q)$. Upper right: coefficients of $\Hilb(R_{(2^{30}),(2^{30})};q)$. Lower left: coefficients of $\Hilb(R_{(3^{20}),(3^{20})};q)$. Lower right: coefficients of $\Hilb(R_{(4^{15}),(4^{15})};q)$.}
\label{fig:coefficient-figure}
\end{figure}

\subsection{Dual Cauchy}
The RSK correspondence applied to contingency tables yields the fundamental Cauchy kernel identity in symmetric function theory
\begin{equation}
    \prod_{i,j \, \geq \, 1} \frac{1}{1-x_iy_j} = \sum_{A} \xxx^{\row(A)} \yyy^{\col(A)} = \sum_\lambda s_\lambda(\xxx)  s_\lambda(\yyy)
\end{equation}
where the middle sum is over all infinite $\ZZ_{\geq 0}$-matrices $A$ (that is, all contingency tables) and the sum on the right is over all partitions $\lambda$. Many results involving the Cauchy kernel have `dual Cauchy' analogs which reflect the identity
\begin{equation}
    \prod_{i,j \, \geq \, 1} (1 + x_i y_j) = \sum_B \xxx^{\row(B)} \yyy^{\col(B)} = \sum_\lambda s_\lambda(\xxx) s_{\lambda'}(\yyy).
\end{equation}
Here the middle sum is over all infinite $0,1$-matrices $B$, the sum on the right is over all partitions $\lambda$, and $\lambda'$ is the conjugate of $\lambda$.

\begin{problem}
    \label{prob:dual-cauchy}
    Let $\alpha,\beta \models_0 n$ and let $\DDD_{\alpha,\beta}$ be the locus of $0,1$-matrices with row sums $\alpha$ and column sums $\beta$. Study the orbit harmonics quotient ring $\RRR(\DDD_{\alpha,\beta})$.
\end{problem}

Exterior algebras give another possible algebraic model for $\DDD_{\alpha,\beta}$. Suppose $\ell(\alpha) = k, \ell(\beta) = p$, let $\ttheta_{k \times p} = (\theta_{i,j})$ be a $k \times p$ matrix of anticommuting variables, and let $\wedge \{ \ttheta_{k \times p}\}$ be the exterior algebra over these variables. We have an ideal $I_{\alpha,\beta}^\ttheta \subseteq \wedge \{ \ttheta_{k \times p}\}$ defined in the same way as $I_{\alpha,\beta} \subseteq \FF[\xxx_{k \times p}]$. Explicitly, the ideal $I_{\alpha,\beta}^\ttheta \subseteq \wedge \{ \ttheta_{k \times p}\}$ is generated by $\dots$
\begin{itemize}
    \item all row sums $\theta_{i,1} + \cdots + \theta_{i,p}$ for $1 \leq i \leq k$,
    \item all column sums $\theta_{1,j} + \cdots + \theta_{k,j}$ for $1 \leq j \leq p$,
    \item all products $\theta_{i,j_1} \wedge \cdots  \wedge \theta_{i,j_q}$ where $1 \leq j_1 < \cdots < j_q \leq p$ and $q > \alpha_i$, and
    \item all products $\theta_{i_1,j} \wedge \cdots \wedge\theta_{i_q,j}$ where $1 \leq i_1 < \cdots < i_q \leq k$ and $q > \beta_j$.
\end{itemize}The quotient ring $\wedge \{ \ttheta_{k \times p} \}/I^\ttheta_{\alpha,\beta}$ is likely related to $\DDD_{\alpha,\beta}$.

\subsection{Graded Ehrhart theory}
A {\em lattice polytope} $\PPP \subseteq \RR^N$ is the convex hull of a finite set of points in the integer lattice $\ZZ^N$. For an integer $m \geq 0$, one has the {\em dilate}  $m \PPP := \{mp \,:\, p \in \PPP \}$ and the lattice point count $i_\PPP(m) := \#(m \PPP \cap \ZZ^N)$. The {\em Ehrhart series} of $\PPP$ is the formal power series
\begin{equation}
    \Eseries_\PPP(t) := \sum_{m \geq 0} i_\PPP(m) \cdot t^m = \sum_{m \geq 0} \#(m \PPP \cap \ZZ^N) \cdot t^m \in \QQ[[t]].
\end{equation}
Ehrhart proved \cite{Ehrhart} that $i_\PPP(m)$ is a polynomial function of $m$ of degree $\dim(\PPP)$. The polynomial $i_\PPP(m)$ is called the {\em Ehrhart polynomial} of $\PPP$. It follows that $\Eseries_\PPP(t) \in \QQ(t)$ is a rational function of $t$ called the {\em Ehrhart series} of $\PPP$.

Reiner and the second author used orbit harmonics to define \cite{ReinerRhoades} a $q$-analog of the Ehrhart series as follows. If $\PPP \subseteq \RR^N$ is a lattice polytope, the intersection $m \PPP \cap \ZZ^N$ is a finite locus for each $m$, and we have the graded quotient ring $\RRR(m \PPP \cap \ZZ^N)$. Its Hilbert series $\Hilb(\RRR(m \PPP \cap \ZZ^N);q) \in \QQ[q]$ is a polynomial in $q$. The {\em $q$-Ehrhart series} is given by
\begin{equation}
    \Eseries_\PPP(t,q) := \sum_{m \geq 0} \Hilb(\RRR(m \PPP \cap \ZZ^N);q) \cdot t^m \in \QQ[q][[t]].
\end{equation}
One has $\Eseries_\PPP(t,1) = \Eseries_\PPP(t)$. It is conjectured  that $\Eseries_\PPP(t,q)$ is in fact a rational function in $\QQ(t,q)$; this conjecture is proven   for antiblocking and order polytopes \cite{ReinerRhoades}.

Suppose $\alpha, \beta \models_0 n$ satisfy $\ell(\alpha) = k$ and $\ell(\beta) = p$. The {\em transportation polytope} $\PPP_{\alpha,\beta} \subseteq \Mat_{k \times p}(\RR)$ is the convex hull
\begin{equation}
    \PPP_{\alpha,\beta} := \conv( A \,:\, A \in \CCC_{\alpha,\beta})
\end{equation}
of the set $\CCC_{\alpha,\beta}$ of contingency tables inside the space $\Mat_{k \times p}(\RR)$ of matrices. When $\alpha = \beta = (1^n)$, this is also called the {\em Birkhoff polytope}.
The following is immediate from Corollary~\ref{cor:hilbert-series}.

\begin{corollary}
    \label{cor:birkhoff}
    The graded Ehrhart series of $\PPP_{\alpha,\beta}$ is 
    \begin{equation}
    \label{eq:transportiation-ehrhart}
        \Eseries_{\PPP_{\alpha,\beta}}(t,q) = \sum_{m \geq 0} \left( \sum_{\lambda \vdash mn} K_{\lambda,m\alpha} \cdot K_{\lambda,m\beta} \cdot  q^{mn-\lambda_1} \right) \cdot t^m.
    \end{equation}
\end{corollary}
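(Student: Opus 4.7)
The plan is to combine two ingredients: the classical identification of the lattice points in dilates of the transportation polytope with contingency tables, and the Hilbert series formula of Corollary~\ref{cor:hilbert-series} applied to the scaled compositions $m\alpha, m\beta$.

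First I would recall that $\PPP_{\alpha,\beta}$ admits the hyperplane description
$$\PPP_{\alpha,\beta} = \{A \in \Mat_{k \times p}(\RR_{\geq 0}) \,:\, \row(A) = \alpha, \, \col(A) = \beta\},$$
and that its vertices are integer matrices, a classical fact about transportation polytopes extending the Birkhoff--von Neumann theorem. In particular $\PPP_{\alpha,\beta}$ is a lattice polytope whose lattice points are exactly $\CCC_{\alpha,\beta}$. Rescaling the defining equations by $m$ yields
$$m\PPP_{\alpha,\beta} = \{A \in \Mat_{k \times p}(\RR_{\geq 0}) \,:\, \row(A) = m\alpha, \, \col(A) = m\beta\},$$
so the lattice-point identification $m\PPP_{\alpha,\beta} \cap \ZZ^{kp} = \CCC_{m\alpha, m\beta}$ holds for every $m \geq 0$.

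Next, since $m\alpha$ and $m\beta$ are themselves weak compositions of $mn$, Theorem~\ref{thm:standard-monomial-basis} identifies $\RRR(m\PPP_{\alpha,\beta} \cap \ZZ^{kp}) = \RRR(\CCC_{m\alpha, m\beta}) = R_{m\alpha, m\beta}$, and Corollary~\ref{cor:hilbert-series} (applied to the pair $m\alpha, m\beta$) computes
$$\Hilb(R_{m\alpha, m\beta}; q) = \sum_{\lambda \vdash mn} K_{\lambda, m\alpha} \cdot K_{\lambda, m\beta} \cdot q^{mn - \lambda_1}.$$
Substituting this into the definition $\Eseries_{\PPP_{\alpha,\beta}}(t,q) = \sum_{m \geq 0} \Hilb(\RRR(m\PPP_{\alpha,\beta} \cap \ZZ^{kp});q) \cdot t^m$ and collecting by $t^m$ produces \eqref{eq:transportiation-ehrhart}.

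There is no genuine obstacle; the corollary is pure bookkeeping once one observes that dilating the transportation polytope by $m$ simply rescales the row- and column-sum vectors. The only point requiring even a moment of care is the classical lattice-polytope identity $m\PPP_{\alpha,\beta} \cap \ZZ^{kp} = \CCC_{m\alpha, m\beta}$, which reduces to the observation that the linear system cutting out $\PPP_{\alpha,\beta}$ has integer coefficients and the polytope has integer vertices.
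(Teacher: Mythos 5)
Your proposal is correct and matches the paper's intent exactly: the paper states this corollary as immediate from Corollary~\ref{cor:hilbert-series}, and your argument simply spells out the bookkeeping — the integrality of the transportation polytope gives $m\PPP_{\alpha,\beta} \cap \ZZ^{kp} = \CCC_{m\alpha,m\beta}$, and then the Hilbert series formula applied to the pair $m\alpha, m\beta$ is summed against $t^m$. No issues.
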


Does this power series live in $\QQ(t,q)$? There is no known formula for the classical Ehrhart polynomial $i_{\PPP_{\alpha,\beta}}(m)$. Beck and Pixton computed \cite{Beck} this polynomial for $\alpha = \beta = (1^n)$ and $n \leq 9$; the resulting expressions are very complicated. Since $q$-Ehrhart series can be significantly more intricate than their classical counterparts, finding an explicit rational expression for $\Eseries_{\PPP_{\alpha,\beta}}(t,q)$ is probably impossible.

Let $\overline{\PPP}$ be the relative interior of a $d$-dimensional lattice polytope $\PPP \subseteq \RR^N$. We have an interior version
\begin{equation}
    \overline{\Eseries}_\PPP(t,q) := \sum_{m \geq 1} \Hilb(\RRR(\overline{m\PPP} \cap \ZZ^N);q) \cdot t^m \in \QQ[q][[t]].
\end{equation}
Reiner and the second author conjecture \cite{ReinerRhoades} the following $q$-analog of Ehrhart reciprocity:
\begin{equation}
    \label{eq:q-reciprocity}
    q^d \cdot \overline{\Eseries}_\PPP(t,q) = (-1)^{d+1} \Eseries(t^{-1}, q^{-1}).
\end{equation}

Let $\alpha,\beta$ be {\em strict} compositions of $n$ (only positive parts) with $\ell(\alpha)=k$ and $\ell(\beta) = p$. If $A  \in \CCC_{\alpha,\beta}$ is a lattice point of $\PPP_{\alpha,\beta}$, one has $A \in \overline{\PPP}_{\alpha,\beta}$ if and only if all entries of $A$ are strictly positive. Let $\one_k$ and $\one_p$ be the all-ones vectors of lengths $k$ and $p$
and let $J$ be the $k \times p$ matrix whose entries are all 1. We have a bijection
\begin{equation}
    \label{eq:transportation-interior}
    \psi: \CCC_{\alpha - p \cdot \one_k, \beta - k \cdot \one_p} \xrightarrow{\, \, \sim \, \, } \CCC_{\alpha,\beta} \cap \overline{\PPP}_{\alpha,\beta}  \quad \quad \psi: B \mapsto B + J.
\end{equation}
Here $\alpha - p \cdot \one_k$ and $\beta - k \cdot \one_p$ are componentwise subtraction.
 The domain of $\psi$ is empty if either $\alpha - p \cdot \one_k$ or $\beta - k \cdot \one_p$ has a negative entry.
Since the orbit harmonics ring $\RRR(-)$ is unchanged by a locus translation such as $\psi$, one has 
\begin{equation}
    \label{eq:transportation-interior-ehrhart}
    \overline{\Eseries}_{\PPP_{\alpha,\beta}}(t,q) = \sum_{m \geq 1} \left( \sum_{\lambda \vdash (mn-kp)} K_{\lambda,m \alpha - p \cdot\one_k} \cdot K_{\lambda,m \beta - k \cdot \one_p} \cdot q^{mn - kp - \lambda_1} \right) \cdot t^m
\end{equation}
where the expression $(\,\cdots)$ is zero if $m\alpha - p \cdot \one_k$ or $m \beta - k \cdot \one_p$ has a negative entry. Can one combine \eqref{eq:transportiation-ehrhart} and \eqref{eq:transportation-interior-ehrhart} to prove the $q$-reciprocity conjecture \eqref{eq:q-reciprocity} in the case of transportation polytopes? Theorem~\ref{thm:zigzag-characterization} gives the expressions
\begin{multline}
\label{eq:transportation-reformulation}
    \Eseries_{\PPP_{\alpha,\beta}}(t,q) = \sum_{m \geq 0} \left(  \sum_{A \in \CCC_{m\alpha,m\beta}} q^{mn - \zigzag(A)} \right) \cdot t^m \quad \text{and}  \\
    \overline{\Eseries}_{\PPP_{\alpha,\beta}}(t,q) = \sum_{m \geq 1} \left( \sum_{B \in \CCC_{m\alpha-p \cdot \one_k, m\beta - k \cdot\one_p}} q^{mn - kp - \zigzag(B)} \right) \cdot t^m.
\end{multline}
Since $\zigzag(\psi(B)) = \zigzag(B)+k+p-1$ and $\dim(\PPP_{\alpha,\beta}) = (k-1)(p-1)$, the formulation \eqref{eq:transportation-reformulation} may be useful for proving $q$-reciprocity.

\section{Acknowledgements}

The authors are grateful to Sara Billey, Carly Klivans, Tanny Libman, and Hanbaek Lyu for helpful conversations. B. Rhoades was partially supported by NSF Grant DMS-2246846. B. Rhoades thanks an anonymous referee of \cite{Rhoades} for the suggestion to look at contingency tables. J. Oh was partially supported by KIAS Individual Grant (HP083401) at  June E Huh Center for Mathematical Challenges in Korea Institute for Advanced Study.

\bibliographystyle{abbrv}
\bibliography{bibliography}

\end{document}